\documentclass[11pt]{article}

\newcommand{\nex}{x}

\newcommand{\de}{\,\mathrm{d}}                               
\newcommand{\e}{\operatorname{e}}                               
\newcommand{\im}{\mathrm{i}}

\newcommand{\p}{\partial}

\newcommand{\real}{\mathrm{Re}\,}

\newcommand{\R}{\mathbb{R}}       
\newcommand{\C}{\mathbb{C}}       
\newcommand{\N}{\mathbb{N}}

\newcommand{\mgf}{ H}                                        
\newcommand{\elf}{ E}  
  
\newcommand{\dive}{\operatorname{div}}

\newcommand{\nor}{{\nu}} 
\newcommand{\curl}{\operatorname{curl}}

\newcommand{\oP}{\mathsf{P}}  
\newcommand{\Pper}{{\oP_{\nu}}}
\newcommand{\Ppar}{{\oP_{t}}}
\newcommand{\ED}{\R^3\setminus\overline\Omega}

\newcommand{\der}{{\rm D}}

\usepackage{xcolor}
\definecolor{capa}{RGB}{50, 90, 160}
\definecolor{delftblue}{RGB}{0,61,165}
\usepackage[colorlinks=true,
  linkcolor=delftblue,
  urlcolor=delftblue,
  citecolor=delftblue]{hyperref}
\newcommand{\Id}{{\mathsf I}}  
\newcommand{\oA}{{\mathsf A}}  
\newcommand{\oB}{{\mathsf B}}  
\newcommand{\oH}{{\mathsf H}}  
\newcommand{\oR}{{\mathsf R}}  
\newcommand{\oS}{{\mathsf S}}  
\newcommand{\oK}{{\mathsf K}}  
\newcommand{\oT}{{\mathsf T}}  
\newcommand{\oC}{{\mathsf C}}  
\newcommand{\oQ}{{\mathsf Q}}  
  
\newcommand{\oL}{{\mathsf L}}

\newcommand{\oV}{{\mathsf V}}  
  
\usepackage{enumitem}

\usepackage{multirow}
\usepackage{amsmath}
\usepackage{amsfonts}
\usepackage{amsmath}
\usepackage{amssymb}  
\usepackage{graphicx}
\usepackage{caption}
\usepackage{mathrsfs}
\usepackage{upgreek}
\usepackage{amsthm}
\usepackage{subfig}
\usepackage{booktabs}
\usepackage{authblk}
\usepackage{cite}
\usepackage{bbm}
\usepackage{url}

\usepackage[]{algorithm2e}

\newtheorem{theorem}{Theorem}[section]
\newtheorem{lemma}[theorem]{Lemma}

\newtheorem{remark}[theorem]{Remark}

\title{Maxwell à la Helmholtz: Direct boundary integral equations for 3D scattering by perfect electric conductors via Helmholtz operators}
\author[1]{Carlos P\'erez-Arancibia\thanks{\href{mailto:c.a.perezarancibia@utwente.nl}{c.a.perezarancibia@utwente.nl}}}
\author[2]{Catalin Turc\thanks{\href{mailto:catalin.turc@njit.edu}{catalin.turc@njit.edu}}}
\affil[1]{\small{Department of Applied Mathematics, University of Twente, the Netherlands}}
\affil[2]{\small{Department of Mathematical Sciences, New Jersey Institute of Technology, USA}}
\date{\today}

\begin{document}
\maketitle

\begin{abstract}
This paper is the direct-formulation companion to~\cite{burbano2025maxwell}, which developed indirect combined-field-only boundary integral equations (BIEs) for time-harmonic electromagnetic scattering by smooth perfectly electrically conducting (PEC) obstacles, relying entirely on Helmholtz boundary integral operators. Here we exploit the same equivalence between the Maxwell PEC scattering problem and a pair of vector Helmholtz boundary value problems---one for the electric field and one for the magnetic field---to derive direct BIE formulations whose unknowns are the Dirichlet and Neumann traces of the total fields, decomposed into their normal and tangential surface components. These unknowns carry direct physical meaning: in particular, the magnetic-field formulation yields the surface electric currents as part of its solution. The mixed regularity of the two field-trace components requires introducing a tailored product Hölder space, a distinctive feature absent from the indirect approach. We prove that the resulting Direct Electric and Magnetic Combined-Field-Only Integral Equations (D-ECFOIE and D-MCFOIE) are uniquely solvable at all frequencies, and introduce Calderón-type regularizations (RD-ECFOIE and RD-MCFOIE) that render them of the Fredholm second kind. We further examine the low-frequency breakdown affecting the electric-field formulation and introduce a modified equation that enforces the physical charge-conservation constraints, which restores numerical accuracy and well-conditioned linear systems for frequencies arbitrarily close to zero. Numerical experiments, performed using a high-order Nyström solver based on the Density Interpolation Method and implemented in the Julia package \texttt{Inti.jl}, validate the accuracy and robustness of the proposed formulations across a range of geometries and frequencies.
\end{abstract}

\section{Introduction}

The numerical solution of frequency-domain electromagnetic scattering problems via boundary integral equations (BIEs) is widely regarded as significantly more challenging than the analogous acoustic problem governed by the Helmholtz equation. Traditional Maxwell BIE formulations---such as the electric field integral equation (EFIE), the magnetic field integral equation (MFIE), and their combined-field counterpart (CFIE)~\cite{volakis2012integral}---involve surface-tangential vector unknowns, complex hypersingular kernels, and notorious difficulties including low-frequency breakdown and the need for specialized Calderón preconditioning strategies. While important contributions have addressed these issues---including high-order Nyström methods~\cite{Bruno2009electromagnetic,ganesh2006spectrally,garza2020boundary,hu2021chebyshev} and current-and-charge reformulations incorporating Helmholtz operators~\cite{epstein2010debye,epstein2013debye,taskinen2006current,bendali2012extension}---the gap between the practical accessibility of Helmholtz and Maxwell BIE solvers remains significant.

The companion paper~\cite{burbano2025maxwell} introduced a framework---referred to as \emph{Maxwell à la Helmholtz}---that bridges this gap by reformulating 3D electromagnetic scattering by smooth perfectly electrically conducting (PEC) obstacles entirely in terms of Helmholtz boundary integral operators. The central idea rests on two equivalence theorems establishing that the Maxwell PEC scattering problem is equivalent to a pair of vector Helmholtz boundary value problems (BVPs) for the electric and magnetic fields individually, with boundary conditions expressed through the Dirichlet and Neumann traces of the respective field components. Building on these theorems, in~\cite{burbano2025maxwell} we develop \emph{indirect} combined-field-only BIE formulations---the Electric and Magnetic Field-Only Integral Equations (ECFOIE and MCFOIE)---in which the unknowns are auxiliary surface densities appearing in a combined-field layer potential ansatz. Crucially, these densities carry no direct physical meaning. The companion paper also introduces Calderón-regularized variants (R-ECFOIE and R-MCFOIE) that render the formulations of the second kind, and addresses the low-frequency breakdown inherent in the electric-field formulation via a modified combined-field potential.

The present paper develops the \emph{direct} counterpart of this framework. Rather than representing the scattered field through an ansatz in terms of auxiliary densities, we apply Green's representation formula directly to the scattered field in the exterior domain and to the incident field in the interior, combining them to obtain integral representations in terms of the Dirichlet and Neumann traces of the \emph{total} fields. These traces carry clear physical meaning: for the electric formulation, the unknown decomposes into the normal component of the total electric field on the surface and the tangential component of its normal derivative; for the magnetic formulation, the roles are interchanged. In particular, the solution of the direct magnetic formulation yields the surface electric currents directly, which are central to many applications and can be used to reconstruct the full scattered field via the classical Stratton--Chu formulas~\cite{Stratton:1939ha}.

A related class of methods---known as \emph{field-only surface integral equations}~\cite{klaseboer2016nonsingular,sun2017robust,sun2020field,sun2020field_die}---has previously explored direct reformulations of Maxwell problems in terms of Helmholtz equations. In particular,~\cite{sun2020field} uses the same non-standard boundary condition on the normal derivative of the electric field (appearing in the equivalence theorems of~\cite{burbano2025maxwell}) to construct a direct BIE. However, that formulation suffers from spurious resonances, is not of Fredholm second kind, and its low-frequency behavior is not analyzed. Spurious resonances for the case of a sphere were partially addressed in~\cite{maltez2024combined}, but neither work treats the magnetic formulation or provides a thorough well-posedness analysis. The present paper provides a complete theoretical treatment of the direct approach that was missing in the prior literature.

Concretely, the contributions of this paper are the following:
\begin{itemize}[leftmargin=*]
\item We derive direct combined-field-only integral equations for both the electric and magnetic fields---the D-ECFOIE and D-MCFOIE---whose unknowns are projections of the Dirichlet and Neumann traces of the respective total fields onto the tangential and normal components at the surface. These unknowns live in a product function space $\mathcal{C}^{p,q}_{\nu,t}(\Gamma)$, defined in~\eqref{eq:func_space}, with mixed Hölder regularity reflecting the different smoothness of the two components, which is a distinctive feature of the direct approach not present in the indirect formulation.

\item We prove uniqueness of solutions to the D-ECFOIE and D-MCFOIE for all wavenumbers $k > 0$ (Theorems~\ref{eq:unique_E} and~\ref{eq:unique_H}). The argument proceeds by reducing the homogeneous BIEs to the uniqueness question for the underlying PEC scattering problem, formulated as a pair of vector Helmholtz boundary value problems derived in~\cite{burbano2025maxwell}.

\item We introduce Calderón-type (single-layer) regularizers for both formulations (the RD-ECFOIE and RD-MCFOIE), and establish that the resulting regularized equations are of the Fredholm second kind on $C^{0,\alpha}(\Gamma,\C^3)$, from which existence of solutions follows by the Fredholm alternative.

\item We address the potential low-frequency breakdown of the electric-field formulation as $k \downarrow 0$ by introducing a modified formulation that enforces the underlying charge-conservation constraints. We prove the unique solvability of the modified formulation for all $k > 0$ under mild conditions on the stabilization parameter~($\xi$), and demonstrate numerically that it restores accuracy and well-conditioned GMRES iteration counts for frequencies arbitrarily close to $k = 0$ in the cases severely affected by the phenomenon. The magnetic-field formulation, by contrast, is unaffected by this breakdown on simply-connected surfaces.

\item We validate the proposed formulations through numerical experiments using a high-order Nyström method based on the General-Purpose Density Interpolation Method (GP-DIM)~\cite{faria2021general}, as implemented in the open-source Julia package \texttt{Inti.jl}~\cite{IntegralEquations_Inti_2025}. The examples cover smooth surfaces of varying topology and geometry under planewave incidence, and span a wide range of frequencies including the near-zero regime.
\end{itemize}

The paper is organized as follows. Section~\ref{sec:prelim} collects notation, function spaces, and the Helmholtz boundary integral operators used throughout. Section~\ref{sec:problem_setup} formulates the PEC scattering problem and recalls from~\cite{burbano2025maxwell} the equivalence theorems between the Maxwell problem and vector Helmholtz BVPs, which form the theoretical backbone of our formulations. Sections~\ref{sec:electric_field_BIE} and~\ref{sec:magnetic_field_BIE} develop the direct electric and magnetic combined-field-only formulations, respectively, together with their Calderón-regularized variants and low-frequency-robust modifications. Finally, Section~\ref{sec:numerics} presents the numerical experiments.

\section{Preliminaries\label{sec:prelim}}
The essential definitions and notation used throughout the paper are collected in this section. 

\paragraph{Function spaces}
First, we introduce the notation for the function spaces that will be used throughout the paper. Let $X \subset \mathbb{R}^m$ and $Y \subset \mathbb{C}^n$. Given $p \in \mathbb{N}_0$ and $\alpha \in (0,1)$, the Hölder space $C^{p,\alpha}(X, Y)$ is defined as 
$$
C^{p,\alpha}(X, Y) = \left\{ F=(f_1,\cdots,f_n): X \to Y \ \middle| \ f_i \in C^p(X), \ \forall i\in\{1,\ldots,n\}, \ \text{and} \ \forall |\beta| = p, \ \der^\beta f_i \in C^{0,\alpha}(X) \right\},
$$
with the norm
$$
\|F\|_{C^{p,\alpha}(X,Y)} := \sum_{i=1}^n \left( \sum_{|\beta| \le p} \|\der^\beta f_i\|_{C(X)} + \sum_{|\beta| = p} \left[\der^\beta f_i\right]_{C^{0,\alpha}(X)} \right).
$$
Throughout, we use standard multi-index notation, and the symbol $\der$ denotes the total derivative. For the definition and properties of the standard Hölder space $C^{0,\alpha}(X)$, as well as the norm $\|\cdot\|_{C(X)}$ and the seminorm $[~\cdot~]_{C^{0,\alpha}(X)}$, we refer the reader to~\cite[Sec. 4.1]{gilbarg2001elliptic} or~\cite[Sec. 2.2]{COLTON:1983}. 

\paragraph{Domain geometry and boundary regularity}
We let $\Gamma$ denote a smooth surface that encloses an open, bounded region $\Omega \subset \mathbb{R}^3$, and assume that the exterior domain $\mathbb{R}^3 \setminus \overline{\Omega}$ is connected. The boundary $\Gamma = \partial \Omega$ admits the decomposition $\Gamma = \Gamma_1 \cup \cdots \cup \Gamma_J$ into $J$ pairwise disjoint, closed, bounded, and connected components $\Gamma_j$, for $j \in \{1, \ldots, J\}$, each assumed to be at least $C^{2,\alpha}$-smooth for some $\alpha \in (0,1)$. The outward unit normal vector field is denoted by $\nu$ and belongs to $C^{1,\alpha}(\Gamma, \mathbb{R}^3)$. This regularity guarantees that the shape operator, defined as $\mathscr{R} = \der_\Gamma \nu$, where $\der_\Gamma$ denotes the surface derivative, belongs to $C^{0,\alpha}(\Gamma, \mathbb{R}^3)$. Similarly, the mean curvature, given by $\mathscr{H} = \frac{1}{2} \dive_\Gamma \nu$ in terms of the surface divergence, lies in $C^{0,\alpha}(\Gamma)$.

Specifically, we let ${\rm x}: \mathcal N \subset \mathbb{R}^2 \to \Gamma$ be a $C^{2,\alpha}$-smooth local regular parametrization mapping an open set $\mathcal N$ onto a neighborhood of a point on $\Gamma$. The coefficients of the first and second fundamental forms are then given by
$$
E = {\rm x}_u \cdot {\rm x}_u,\quad F = {\rm x}_u \cdot {\rm x}_v,\quad G = {\rm x}_v \cdot {\rm x}_v,
$$
and
$$
L = -{\rm x}_u \cdot \nu_u, \quad M = -{\rm x}_u \cdot \nu_v = -{\rm x}_v \cdot \nu_u, \quad N = -{\rm x}_v \cdot \nu_v,
$$
respectively, where subscripts denote partial derivatives with respect to the local coordinates $(u,v)\in \mathcal N$. Letting ${\rm x}_u$ and ${\rm x}_v$ be represented as column vectors, the shape operator $\mathscr{R}$ can then be expressed in terms of these coefficients as
\begin{equation*}\label{eq:curvature_operator}
\mathscr{R} = -[{\rm x}_u \ {\rm x}_v] \begin{bmatrix} E & F \\ F & G \end{bmatrix}^{-1} \begin{bmatrix} L & M \\ M & N \end{bmatrix} \begin{bmatrix} E & F \\ F & G \end{bmatrix}^{-1} [{\rm x}_u \ {\rm x}_v]^\top,
\end{equation*}
and the mean curvature is recovered via $\mathscr{H} = \frac{1}{2} \operatorname{tr} \mathscr{R}$. As is well known~\cite{do2016differential}, both $\mathscr{R}$ and $\mathscr{H}$ are intrinsic geometric quantities of the surface, independent of the particular parametrization, and can be defined globally on all of $\Gamma$. In fact they define the following multiplication operators, which will arise naturally in our boundary integral equation formulations:
\begin{align}
\oR : C^{p,\alpha}(\Gamma, \C^3) \to C^{0,\alpha}(\Gamma, \C^3), & \quad (\oR \varphi)(x) = \mathscr{R}(x) \varphi(x),\quad\text{  and} \label{eq:R_op} \\
\oH : C^{p,\alpha}(\Gamma, \C^3) \to C^{0,\alpha}(\Gamma, \C^3), & \quad (\oH \varphi)(x) = \mathscr{H}(x) \varphi(x),\quad x\in\Gamma,\ p\in \N_0. \label{eq:H_op}
\end{align}

\paragraph{Traces and projectors}
Next, we introduce the notation for the exterior and interior traces of sufficiently regular $\mathbb{C}^n$-valued vector fields, with $n \in \{1, 3\}$, which will be used throughout the paper to impose boundary conditions and to define the decomposition of surface unknowns.
Let $U^+ \subset \mathbb{R}^3 \setminus \overline\Omega$ (resp. $U^- \subset \Omega$) be an open set such that its closure contains~$\Gamma$. 
The exterior and interior Dirichlet traces are then defined as
\begin{subequations}\begin{align}
\gamma^+: C^{p,\beta}(\overline{U^+},\C^n)\to C^{p,\beta}(\Gamma,\C^n),&\quad \gamma^+ F(\nex) = \lim_{\delta\to 0+} F(\nex+\delta\nor(\nex)),\label{eq:Dir_trace_def}\\
\gamma^-: C^{p,\beta}(\overline{U^-},\C^n)\to C^{p,\beta}(\Gamma,\C^n),&\quad \gamma^- F(\nex) = \lim_{\delta\to 0+} F(\nex-\delta\nor(\nex)),\quad \nex\in\Gamma,
\end{align}\label{eq:Dir_traces}\end{subequations}
where $(p,\beta)\in(\{0,1\}\times [0,1])\cup(\{2\}\times [0,\alpha])$, respectively. 

Similarly, we introduce the following notation for the exterior and interior Neumann traces: 
\begin{subequations}
\begin{align}
\p^+_\nu: C^{p+1,\beta}(\overline{U^+},\C^n)\to C^{p,\beta}(\Gamma,\C^n),&\quad\p^+_\nu F(\nex) := \lim_{\delta\to 0+}\der F(x+\delta\nu(x)) \nor(\nex),\label{eq:Neu_trace_def}\\
\p^-_\nu: C^{p+1,\beta}(\overline{U^-},\C^n)\to C^{p,\beta}(\Gamma,\C^n),&\quad\p^-_\nu F(\nex) := \lim_{\delta\to 0+}\der F(x-\delta\nu(x)) \nor(\nex),\quad\nex\in\Gamma,
\end{align}\label{eq:Neu_traces}\end{subequations} 
for $(p,\beta)\in(\{0,1\}\times [0,1])\cup(\{1\}\times [0,\alpha])$, respectively, 
where 
$\der: F\mapsto(\der F)_{i,j}= \frac{\p F_{i}}{\p x_j}$,  $i\in\{1,\ldots,n\}$, $ j\in\{1,2,3\},$
 denotes the total derivative (which is represented by the Jacobian matrix $(\p F_i/\p x_j)_{i,j}$). The trace operators $\gamma^\pm$ and $\partial^\pm_\nu$ are well-defined and bounded due to the assumed $C^{2,\alpha}$-regularity of the surface $\Gamma$, with $\alpha \in (0,1)$. Throughout this paper, unless the interior trace is explicitly required, we adopt the convention
$\gamma := \gamma^+$ and  $\p_\nu := \p^+_\nu$, i.e., $\gamma$ and $\p_\nu$ without a superscript always denote the exterior traces. We also use the alternative notation $\frac{\partial F}{\partial \nu} := \p^+_\nu F$ for the exterior Neumann trace of a field $F$.

It is important to note that, when $n = 3$, the above definition~\eqref{eq:Neu_traces} implies that the Neumann traces~$\p^\pm_\nu F$ are a vector field whose coordinate components correspond to the scalar Neumann traces of each component $(F)_i=F_i$ ($i \in \{1, 2, 3\}$) of the vector field $F$, i.e., $(\p^\pm_\nu F)_i=\p^\pm_\nu F_i$.

We make extensive use of the projectors on the tangent plane of $\Gamma$ and along its normal. Specifically, we define the operators:
\begin{subequations}\begin{align}
\oP_\nu:C^{p,\beta}(\Gamma,\C^3)\to C^{p,\beta}(\Gamma,\C^3),&\quad  \Pper\varphi(x)=(\nu(x)\cdot\varphi(x))\nu(x)\quad\text{and}\label{eq:P_perp_op}\\
 \oP_t:C^{p,\beta}(\Gamma,\C^3)\to C^{p,\beta}(\Gamma,\C^3),&\quad  \Ppar\varphi(x)=\varphi(x)-(\nu(x)\cdot\varphi(x))\nu(x),\quad x\in\Gamma,\label{eq:P_par_op}
\end{align}\label{eq:projectors}\end{subequations}
for $(p,\beta)\in(\{0\}\times [0,1])\cup(\{1\}\times [0,\alpha])$,  which map the vector $\varphi(x)$ onto the normal and the tangent plane at $x\in\Gamma$, respectively. Clearly, since $\nu\in C^{1,\alpha}(\Gamma,\R^3)$, both projectors are bounded.  

\paragraph{Helmholtz integral operators}
Our boundary integral equation formulations are based on Helmholtz layer potentials and boundary integral operators, which act component-wise on the three Cartesian components of a vector density, introduced below. We begin by defining the vector-valued Helmholtz layer potentials and their associated boundary integral operators. Let $\mathcal{S}: C^{0,\alpha}(\Gamma, \C^3) \to C^{2}(\R^3 \setminus \Gamma, \C^3)$ and $\mathcal{D}: C^{0,\alpha}(\Gamma, \C^3) \to C^{2}(\R^3 \setminus \Gamma, \C^3)$, with $\alpha \in (0,1)$, denote the Helmholtz single- and double-layer potentials, respectively, defined as
\begin{align}
\mathcal S[\varphi](\nex) :=&\int_\Gamma G(x,y)\varphi(y)\de s(y)\quad\text{and}\label{eq:SL_pot}\\
\mathcal D[\varphi](\nex):=&\int_{\Gamma} \frac{\p G(x,y)}{\p \nu(y)}\varphi(y)\de s(y),\quad x\in\R^3\setminus\Gamma,\label{eq:DL_pot}    
\end{align}
for vector density functions $\varphi\in C^{0,\alpha}(\Gamma,\C^3)$, where $$
G(x,y) := \frac{\e^{\im k|x-y|}}{4\pi |x-y|},\quad x\neq y,\quad k\geq0,
$$ is the free-space Green's function for the Helmholtz equation.  Here and throughout this work, the (vectorial) layer potentials~\eqref{eq:SL_pot} and~\eqref{eq:DL_pot}, along with the associated (vectorial) boundary integral operators introduced below, are to be interpreted component-wise. 

It follows from the standard jump relations for scalar Helmholtz layer potentials~\cite[Thms. 2.12 \& 2.13]{COLTON:1983} that analogous jump conditions hold for the vector-valued Helmholtz layer potentials. Specifically, consider a sufficiently regular vector density function $\varphi:\Gamma \to \C^3$. The vectorial layer potentials~\eqref{eq:SL_pot} and~\eqref{eq:DL_pot} admit extensions to the closure of both the exterior domain $\R^3 \setminus \Omega$ and the interior domain $\overline{\Omega}$, and satisfy the following limiting relations as the target point $x \in \R^3 \setminus \Gamma$ approaches the boundary $\Gamma$ from either side: 
\begin{subequations}\label{eq:jumps_LP}\begin{align}
\gamma^\pm (\mathcal S\varphi) =&~\oS\varphi,&
\gamma^\pm (\mathcal D\varphi) =&\pm\frac{1}{2}\varphi+\oK\varphi,\label{eq:jump_Dir}\\
\p_\nu^\pm (\mathcal S\varphi) =&\mp\frac{1}{2}\varphi+ \oK'\varphi,&
\p_\nu^\pm (\mathcal D\varphi) =&~\oT\varphi,\label{eq:jump_Neu}
\end{align}\end{subequations}
where $\gamma^\pm$ and $\p_\nu^\pm$  denote the exterior/interior Dirichlet and Neumann traces defined in~\eqref{eq:Dir_traces} and~\eqref{eq:Neu_traces}, respectively, and $\oS$, $\oK$, $\oK'$ and $\oT$ are the (vector) boundary integral operators: 
\begin{subequations}\begin{align}
\oS:C^{0,\alpha}(\Gamma,\C^3)\to C^{1,\alpha}(\Gamma,\C^3),&\quad \oS[\varphi](\nex):=\int_{\Gamma}G(x,y)\varphi(y)\de s(y),\label{eq:single_op}\\
\oK:C^{0,\alpha}(\Gamma,\C^3)\to C^{1,\alpha}(\Gamma,\C^3),&\quad \oK[\varphi](\nex) :=\int_{\Gamma}\frac{\p G(x,y)}{\p \nu(y)}\varphi(y)\de s(y),\label{eq:double_op}\\
\oK':C^{0,\alpha}(\Gamma,\C^3)\to C^{0,\alpha}(\Gamma,\C^3),&\quad \oK'[\varphi](\nex):= \int_{\Gamma}\frac{\p G(x,y)}{\p \nu(x)}\varphi(y)\de s(y),\label{eq:adj_op}\\
\oT:C^{1,\alpha}(\Gamma,\C^3)\to C^{0,\alpha}(\Gamma,\C^3),&\quad \oT[\varphi](\nex) :={\rm f.p.}\!\int_{\Gamma}\frac{\p^2 G(x,y)}{\p\nu(x)\p \nu(y)}\varphi(y)\de s(y),\label{eq:hyper_op}
\end{align}\label{eq:BIOs}\end{subequations}
for $\nex\in\Gamma$, which are well defined component-wise and bounded under the assumption that $\Gamma$ is $C^{2,\alpha}$-smooth. This result directly follows from the properties of the corresponding scalar operators~\cite[Thm. 3.4]{COLTON:2012}. The hypersingular integral defining $\oT$ is to be understood in the sense of the Hadamard finite-part integral~\cite{hackbusch1995integral}. 

The following theorem, whose proof follows directly from \cite[Thm. 3.2]{COLTON:2012} and \cite[Thm. 2.31]{COLTON:1983}, together with the componentwise (diagonal) nature of the operators defined in~\eqref{eq:BIOs}, summarizes  key mapping properties of these operators that will be used throughout the remainder of the paper. We emphasize that the subscript~0 in $\oT_0$, $\oS_0$, $\oK_0$, and $\oK'_0$ indicates the hypersingular, single-layer, double-layer, and adjoint double-layer operators, respectively, each defined as in~\eqref{eq:BIOs} for the case $k = 0$.
\begin{theorem}
The operators $\oS$, $\oK'$, $\oK$, and $\oT - \oT_0$ are compact on $C^{0,\alpha}(\Gamma, \mathbb{C}^3)$. 
\end{theorem}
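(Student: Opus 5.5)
Since all four operators act diagonally on the three Cartesian components of a density in $C^{0,\alpha}(\Gamma,\C^3)$, and a direct sum of compact operators on $C^{0,\alpha}(\Gamma)^3$ is compact, the statement reduces to the scalar case: each of $\oS$, $\oK$, $\oK'$ and $\oT-\oT_0$, acting on a single component, must be compact on $C^{0,\alpha}(\Gamma)$. Once the scalar statements are in hand, the vector version follows at once, because the norm on $C^{0,\alpha}(\Gamma,\C^3)$ is the sum of the componentwise norms.

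For $\oS$ and $\oK$ I would use the mapping properties recorded in~\eqref{eq:BIOs}: both are bounded from $C^{0,\alpha}(\Gamma)$ into $C^{1,\alpha}(\Gamma)$, which is \cite[Thm. 3.2]{COLTON:2012} for the $C^{2,\alpha}$ surface $\Gamma$. The embedding $C^{1,\alpha}(\Gamma)\hookrightarrow C^{0,\alpha}(\Gamma)$ is compact by Arzelà--Ascoli on the compact manifold $\Gamma$. A bounded operator followed by a compact embedding is compact, so this settles $\oS$ and $\oK$. For $\oK'$ the kernel $\partial_{\nu(x)}G(x,y)$ is only weakly singular, of order $|x-y|^{-1}$. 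This holds because $(x-y)\cdot\nu(x)=O(|x-y|^2)$ on a $C^2$ surface, and the $k$-dependent part of the kernel is smoother still. Weakly singular operators of this type map $C^{0,\alpha}$, and even $C(\Gamma)$, boundedly into $C^{0,\beta}$ for some $\beta>\alpha$, or into $C^{0,\alpha}$ with a compact factor. This is \cite[Thm. 2.31]{COLTON:1983}, and compactness on $C^{0,\alpha}(\Gamma)$ follows.

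The main obstacle is $\oT-\oT_0$, because $\oT$ itself is hypersingular and not compact; only the difference is compact, as the leading singularities cancel. I would write $G-G_0=\frac{\e^{\im k r}-1}{4\pi r}$ with $r=|x-y|$. Expanding, this equals $\frac{\im k}{4\pi}-\frac{k^2 r}{8\pi}+O(r^2)$, a kernel that is bounded and Lipschitz. Its mixed normal derivative $\partial_{\nu(x)}\partial_{\nu(y)}(G-G_0)$ is then $O(r^{-1})$, so it is weakly singular, and no finite part is needed. More precisely, the dangerous term is $k^2\,\partial_{\nu(x)}\partial_{\nu(y)} r = k^2\,\bigl(-\nu(x)\cdot\nu(y)+(\nu(x)\cdot\hat r)(\nu(y)\cdot\hat r)\bigr)/r$, where $\hat r=(x-y)/r$. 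This is a standard weakly singular kernel of the class handled in \cite[Thm. 2.31]{COLTON:1983}. Alternatively, I would invoke Maue's identity $\oT\varphi=\dive_\Gamma$-type representation plus $k^2\nu\cdot\oS(\nu\varphi)$. In that representation the difference $\oT-\oT_0$ consists of $(\oS-\oS_0)$-type tangential terms plus $k^2\nu\cdot\oS(\nu\varphi)$, both smoothing. This is exactly the content of \cite[Thm. 3.2]{COLTON:2012}, which gives boundedness of $\oT-\oT_0$ from $C^{0,\alpha}(\Gamma)$ into $C^{1,\alpha}(\Gamma)$. The compact embedding into $C^{0,\alpha}(\Gamma)$ then concludes the argument, and assembling the componentwise statements completes the proof.
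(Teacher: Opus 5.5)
Your proposal is correct and takes essentially the same route as the paper. The paper reduces to the scalar operators through their componentwise (diagonal) action and then cites the classical Colton--Kress mapping and compactness results \cite[Thm. 3.2]{COLTON:2012}, \cite[Thm. 2.31]{COLTON:1983}. You simply unpack those scalar facts: smoothing into $C^{1,\alpha}$ or $C^{0,\beta}$ followed by a compact embedding, and the $O(|x-y|^{-1})$ kernel of $\oT-\oT_0$, for which the companion estimate $|\kappa(x,y)-\kappa(z,y)|\le M|x-z|/|x-y|^{2}$ required by \cite[Thm. 2.7]{COLTON:1983} also holds, though you leave it implicit.
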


\section{Problem setup}\label{sec:problem_setup}
This paper addresses the problem of time-harmonic electromagnetic scattering arising when an incident electromagnetic field $(E^i, H^i)$ impinges on the surface $\Gamma$ of a PEC obstacle, embedded in an unbounded, homogeneous, isotropic medium characterized by dielectric and magnetic constants $\epsilon > 0$ and $\mu > 0$, respectively. 

The incident electromagnetic field $(E^i, H^i)$, where each field belongs to $C^{1,\alpha}(U, \mathbb{C}^3)$, is assumed to be defined in an open set $U \subset \mathbb{R}^3$ containing~$\Omega$, where it satisfies the time-harmonic Maxwell equations:
\begin{equation}\label{eq:maxwell_inc}
\curl E^i - \im\omega \mu H^i =0 \quad \text{and} \quad \curl H^i + \im\omega \epsilon E^i = 0 \quad\text{in}\quad U,
\end{equation}
with $\omega > 0$ denoting the angular frequency.

The scattered electric and magnetic fields, denoted by $\elf^s$ and $\mgf^s$, respectively, are generated by the interaction of the incident field with the PEC boundary. These scattered fields satisfy the time-harmonic Maxwell equations:
\begin{subequations}\begin{equation}\label{eq:maxwell}
\curl \elf^s - \im\omega \mu\mgf^s = 0 \quad \text{and} \quad \curl \mgf^s + \im\omega \epsilon \elf^s = 0 \quad \text{in} \quad \ED,
\end{equation}
with the electric field subject to the PEC boundary condition~\cite{kirsch2016mathematical}: 
\begin{equation}\label{eq:PEC_BC}
\oP_t\gamma\elf^s =  -\oP_t \gamma\elf^i \quad\text{on}\quad\Gamma,
\end{equation}
which is written using the tangential projector~\eqref{eq:P_par_op}.  The scattered field is further required to satisfy the Silver--Müller radiation condition at infinity, given by   either
\begin{equation}\label{eq:rad_cond_sm}\begin{split}
\lim _{|\nex| \rightarrow \infty} |\nex| \left( \curl\elf^s(\nex) \times \frac{\nex}{|\nex|} - \im k\elf^s(\nex) \right) = 0\quad\text{or}\quad
\lim _{|\nex| \rightarrow \infty} |\nex| \left( \curl H^s(\nex) \times \frac{\nex}{|\nex|} - \im kH^s(\nex) \right) = 0,
\end{split}\end{equation}\label{eq:scatt_problem}\end{subequations}
where the limit holds uniformly with respect to all directions $\nex/|\nex|$ and where $k:=\omega\sqrt{\epsilon\mu}>0$ is the wavenumber.  The time dependence $\e^{-\im \omega t}$ is assumed throughout the paper.

The following theorem, whose proof can be found in~\cite[Corollary 5.14]{burbano2025maxwell}, establishes the well-posedness of the scattering problem~\eqref{eq:scatt_problem} in H\"older spaces suitable for our analysis:
\begin{theorem}\label{eq:reg_solution}
Under the stated assumptions on the surface $\Gamma$ and the incident fields $E^i$ and $H^i$, the PEC scattering problem~\eqref{eq:scatt_problem} admits a unique solution
$E^s, H^s \in C^2(\mathbb{R}^3 \setminus \overline{\Omega}, \mathbb{C}^3) \cap C^{1,\alpha}(\mathbb{R}^3 \setminus \Omega, \mathbb{C}^3)$ for all $\omega>0$.
\end{theorem}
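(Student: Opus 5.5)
The plan is to establish uniqueness and existence separately, and then upgrade the regularity of the solution from the energy/Hölder framework to $C^{1,\alpha}$ up to the boundary.

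\emph{Uniqueness.} Let $(E^s,H^s)$ solve the homogeneous problem, i.e.\ with $\oP_t\gamma E^s=0$. Then $\nu\times E^s=0$ on $\Gamma$. The identity $\int_\Gamma \nu\times E^s\cdot\overline{H^s}\,\de s=0$ together with Rellich's lemma gives that both fields vanish identically outside $\overline\Omega$; this is the standard argument of~\cite{COLTON:1983,kirsch2016mathematical}. Only $C^1$ regularity up to $\Gamma$ is needed to apply the divergence theorem on a truncated exterior domain, and the class in the statement provides this.

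\emph{Existence.} I would use the classical combined-field ansatz
\[
E^s=\curl\mathcal A[a]+\eta\,\curl\curl\mathcal A[\oS_0^2 a],\qquad H^s=\tfrac{1}{\im\omega\mu}\curl E^s,
\]
where $\mathcal A$ is the scalar-kernel vector potential and $a$ is a tangential Hölder density. Here $\oS_0^2$ is inserted so that the resulting boundary integral equation is of the second kind on the tangential Hölder space $C^{0,\alpha}_t(\Gamma)$ with Hölder surface divergence. The steps are as follows. First, take tangential traces using the jump relations for vector potentials, which follow componentwise from~\eqref{eq:jumps_LP}. Second, identify the operator as $\tfrac12 I+$ compact, using the compactness theorem for $\oS,\oK,\oK'$ stated above. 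Third, show injectivity via the interior problem and the coercivity of $\oS_0$. Fourth, apply the Riesz--Fredholm theory. The Silver--Müller condition holds automatically for the potentials.

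\emph{Regularity.} Layer potentials with $C^{0,\alpha}$ (resp.\ $C^{1,\alpha}$) densities on a $C^{2,\alpha}$ surface extend to $C^{1,\alpha}(\R^3\setminus\Omega)$, by the Hölder mapping properties in~\cite{COLTON:2012}. This gives $E^s,H^s\in C^{1,\alpha}$ up to $\Gamma$ once the density lies in the right space. For this I need the data $\oP_t\gamma E^i$ to lie in $C^{1,\alpha}$ with Hölder surface divergence. That holds because $E^i\in C^{1,\alpha}$, and because $\dive_\Gamma(\nu\times E^i)=-\nu\cdot\curl E^i=-\im\omega\mu\,\nu\cdot H^i$ is $C^{1,\alpha}$ since $H^i\in C^{1,\alpha}$. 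Interior $C^2$ smoothness follows from ellipticity: each Cartesian component satisfies the Helmholtz equation.

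The main obstacle is the regularity bookkeeping: showing that the density solving the second-kind equation has enough smoothness, in particular a $C^{0,\alpha}$ surface divergence, for $\curl\curl\mathcal A$ to be $C^{1,\alpha}$ up to $\Gamma$ on a merely $C^{2,\alpha}$ surface. I would handle this by a bootstrap. The equation expresses $a$ as the data minus smoothing operators applied to $a$. Since those operators gain one Hölder derivative, a single iteration of the equation lifts the regularity of $a$ to the required level.
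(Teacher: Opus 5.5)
\emph{Note:} the paper contains no proof of this statement to compare against. It only cites \cite[Corollary~5.14]{burbano2025maxwell}, so your argument has to stand on its own. Your uniqueness step is the standard Rellich argument and is fine. Your existence step follows the classical Colton--Kress route, but it needs two repairs, and the second is a real gap.

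\textbf{The ansatz as written does not give injectivity.} The classical form is $E^s=\curl\mathcal A[a]+\im\eta\,\curl\curl\mathcal A[\nu\times\oS_0^2a]$ with $\eta\in\R\setminus\{0\}$, and both the factor $\nu\times$ and the factor $\im$ matter.
\begin{itemize}
\item The $\nu\times$ keeps the second density tangential. It then adds nothing to the jump of $\nu\times E^s$ and exactly $\im\eta k^2\,\nu\times\oS_0^2a$ to the jump of $\nu\times\curl E^s$.
\item If $E^s$ vanishes outside, the interior limits satisfy $\nu\times E^s_-=-a$ and $\oP_t\curl E^s_-=-\im\eta k^2\oP_t\oS_0^2a$.
\item Hence $\int_\Gamma(\nu\times E^s_-)\cdot\overline{\curl E^s_-}\,\de s=-\im\eta k^2\|\oS_0a\|^2_{L^2}$. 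This must equal the real number $\int_\Omega(|\curl E^s|^2-k^2|E^s|^2)\,\de x$, which forces $\oS_0a=0$.
\item Without the $\im$ (and with real $\eta$), that boundary integral is real and gives nothing.
\item Without the $\nu\times$, the normal part of $\oS_0^2a$ puts a double-layer term into $\dive\mathcal A[\oS_0^2a]$. The pairing then produces sign-indefinite terms such as $\int_\Gamma a\cdot\overline{\nu\times\oS_0^2a}\,\de s$.
\end{itemize}
Also, the second-kind structure does not follow from the compactness of $\oS,\oK,\oK'$ alone. It rests on Maxwell-specific properties of the operators $M$ and $N$ on the space $T^{0,\alpha}_d$ of tangential $C^{0,\alpha}$ fields with $C^{0,\alpha}$ surface divergence.

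\textbf{The genuine gap is the regularity upgrade.} This is exactly what the statement adds to the classical theorem, whose solution is only continuous up to $\Gamma$.
\begin{itemize}
\item For $E^s\in C^{1,\alpha}$ your plan needs $a\in C^{1,\alpha}$, since $\curl\mathcal A[a]$ is of double-layer type.
\item $\curl E^s$ contains $\curl\curl\mathcal A[a]=\nabla\mathcal S[\dive_\Gamma a]+k^2\mathcal S[a]$. So $H^s\in C^{1,\alpha}$ also needs $\dive_\Gamma a\in C^{1,\alpha}$. A $C^{0,\alpha}$ surface divergence, as you wrote, gives only $C^{0,\alpha}$ up to $\Gamma$.
\item In the iteration, the datum and the doubly smoothed $\oS_0^2$-term cause no trouble. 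However, you need $M$ to map $T^{0,\alpha}_d$ into $C^{1,\alpha}$.
\item Since $\dive_\Gamma(Ma)$ involves $\oK'\dive_\Gamma a$, you also need $\oK':C^{0,\alpha}(\Gamma)\to C^{1,\alpha}(\Gamma)$ on a merely $C^{2,\alpha}$ surface.
\end{itemize}
Neither mapping property is available here; \eqref{eq:BIOs} gives $\oK'$ only into $C^{0,\alpha}$. You give no argument for either. After one tangential derivative, the relevant kernels are $O(|x-y|^{-2})$, hence genuinely singular on $\Gamma$. Examples are $(\nu(x)-\nu(y))\cdot\nabla_xG(x,y)$, the kernel of $\oK+\oK'$, and its analogue inside $M$. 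This is one order worse than the kernels handled in Lemma~\ref{lem:comm_bounded}. Controlling them needs cancellation estimates that use the Hölder continuity of $\mathscr R$. You would still have to check whether one pass reaches $C^{1,\alpha}$ or only some $C^{1,\beta}$ with $\beta<\alpha$. As it stands, ``those operators gain one Hölder derivative'' carries the whole regularity claim unproved.

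\textbf{Comparison with the paper's framework.} The paper's Helmholtz framework avoids this difficulty by design, which is presumably why the cited result can be stated as a corollary.
\begin{itemize}
\item Fields are componentwise potentials $\mathcal D[\cdot]-\mathcal S[\cdot]$ with a $C^{1,\alpha}$ double-layer density and a $C^{0,\alpha}$ single-layer density (compare $\mathcal C^{1,0}_{\nu,t}(\Gamma)$ and $\oR_\nu$). Such potentials are $C^{1,\alpha}$ up to $\Gamma$ by the classical Hölder theory.
\item The only extra smoothing the analysis needs is for commutators such as $\oC_\nu$, whose differentiated kernels stay weakly singular.
\item One can then get $E^s\in C^{1,\alpha}$ from an electric representation and $H^s\in C^{1,\alpha}$ from a magnetic one. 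Classical uniqueness, which needs only continuity up to $\Gamma$, identifies the two pairs.
\end{itemize}
To keep your route, you must either prove the $C^{0,\alpha}\to C^{1,\alpha}$ smoothing of $M$ and $\oK'$, or supply a different boundary-regularity argument.
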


We conclude this section by recalling two theorems from~\cite{burbano2025maxwell} that are central to our formulations. They establish that the electromagnetic scattering problem~\eqref{eq:scatt_problem} is equivalent to a pair of vector Helmholtz boundary value problems---one for the electric field and one for the magnetic field---in which the boundary conditions are expressed solely in terms of the Dirichlet and Neumann traces of the respective fields. This is the key feature that enables the direct BIE formulations developed in the following sections.

\begin{theorem} \label{eq:equiv} The following statements are equivalent for fields $E^s,H^s\in C^2(\ED,\C^3)\cap C^{1,\alpha}(\R^3\setminus\Omega,\C^3)$:
\begin{enumerate}
\item\label{thm:part_1}  $E^s$ and $H^s$ solve the PEC scattering problem~\eqref{eq:scatt_problem} with incident fields $E^i,H^i\in C^{1,\alpha}(U,\C^3)$ satisfying~\eqref{eq:maxwell_inc}.  
\item\label{thm:part_2}  $H^s = (\im\omega\mu)^{-1}\curl E^s$ and $E^s$ satisfies:
\begin{subequations}\begin{align}\label{eq:vec_Helm_eqn}
\Delta E^s + k^2E^s=&~  0\quad\text { in }\  \ED,\\
\Ppar(\gamma E^s)=&-\Ppar(\gamma E^i),\label{eq:E_bc_t}\\
\Pper(\p_\nu E^s+2\mathscr H \gamma E^s) =& -\Pper(\p_\nu E^i+2\mathscr H \gamma E^i),\label{eq:E_bc_n}\\
\lim _{|\nex| \rightarrow \infty}|x|\left\{(\der E^s)\frac{\nex}{|x|}  - \im k E^s\right\} =&~0\ \ \text{uniformly in}\ \ \frac{\nex}{|\nex|}.\label{eq:sommerfeld_condition}
\end{align}\label{eq:elf_equiv}\end{subequations}
\item $E^s = -(\im\omega\epsilon)^{-1}\curl H^s$ and $H^s$ satisfies 
\begin{subequations}
\begin{align}
\Delta H^s + k^2H^s=&~ 0\quad\text { in }\  \ED,\label{eq:vec_Helm_eqn_H}\\
\Pper(\gamma H^s)=&~-\Pper(\gamma H^i)\label{eq:H_bc_n}\\
\Ppar(\p_\nu H^s+\mathscr R \gamma H^s) =&~ -\Ppar(\p_\nu H^i+\mathscr R \gamma H^i)\label{eq:H_bc_t}\\
\lim _{|\nex| \rightarrow \infty}|x|\left\{(\der H^s)\frac{\nex}{|x|}  - \im k H^s\right\} =&~0\ \ \text{uniformly in}\ \ \frac{\nex}{|\nex|}.\label{eq:sommerfeld_condition_H}
\end{align}
\label{eq:mgf_equiv}\end{subequations}
\end{enumerate}
\end{theorem}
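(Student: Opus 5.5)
The equivalences will be proved by going $1\Rightarrow 2\Rightarrow 1$ and $1\Rightarrow 3\Rightarrow 1$. Two facts are used throughout:
\begin{itemize}
\item[(a)] Any divergence-free solution of the vector Helmholtz equation satisfies $\curl\curl F=k^2F$.
\item[(b)] Boundary identities relate the normal derivative of a field to its surface divergence and curl.
\end{itemize}
For (b) one writes $\der F\,\nu$ in terms of $\curl F\times\nu$, $\dive F$, the surface gradient of $\nu\cdot F$ and the shape operator. The explicit formulas are
\[
\nu\cdot\p_\nu F=\dive F-\dive_\Gamma(\Ppar F)-2\mathscr H\,\nu\cdot F,
\qquad
\Ppar\p_\nu F=\nu\times\curl F\times\nu+\nabla_\Gamma(\nu\cdot F)-\mathscr R\,\Ppar F .
\]
These are stated here for fields regular up to $\Gamma$. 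They follow by expressing $\der F$ in a local frame $\{{\rm x}_u,{\rm x}_v,\nu\}$, using $\mathscr R=\der_\Gamma\nu$ and $\mathscr H=\frac12\operatorname{tr}\mathscr R$, and the usual sign conventions. The appearance of $2\mathscr H$ and $\mathscr R$ in the boundary conditions~\eqref{eq:E_bc_n} and~\eqref{eq:H_bc_t} is exactly what these identities produce.

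\emph{$1\Rightarrow2$.} Taking the divergence of Maxwell's equations~\eqref{eq:maxwell} gives $\dive E^s=0$, and substituting the equations into each other gives $\curl\curl E^s=k^2E^s$. Hence $\Delta E^s+k^2E^s=0$, and $H^s=(\im\omega\mu)^{-1}\curl E^s$ by definition. Condition~\eqref{eq:E_bc_t} is the PEC condition~\eqref{eq:PEC_BC}. For~\eqref{eq:E_bc_n}, apply the first identity to the total field $E=E^s+E^i$, which is divergence free. Its tangential trace vanishes, so $\dive_\Gamma(\Ppar\gamma E)=0$ and $\nu\cdot\p_\nu E+2\mathscr H\,\nu\cdot\gamma E=0$, which is~\eqref{eq:E_bc_n}. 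The Sommerfeld condition~\eqref{eq:sommerfeld_condition} follows from the Silver--Müller condition, since radiating Maxwell solutions have the Stratton--Chu representation and hence satisfy the vector Sommerfeld condition.

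\emph{$2\Rightarrow1$.} This is the main obstacle: one must show $\dive E^s=0$. Set $u=\dive E^s$. It is a radiating scalar Helmholtz solution in the exterior. Reading the first identity backwards with $E=E^s+E^i$, where $\dive E^i=0$, conditions~\eqref{eq:E_bc_t}--\eqref{eq:E_bc_n} give $\gamma u=0$ on $\Gamma$. Uniqueness for the exterior Dirichlet problem then yields $u\equiv0$.

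The regularity of $u$ at $\Gamma$ needs care, since $u$ is only in $C^{0,\alpha}$ up to the boundary. That is enough for exterior Dirichlet uniqueness in the Colton--Kress setting. If needed, one can instead argue with $\Gamma$-parallel surfaces and a limiting procedure.

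With $\dive E^s=0$ in hand, $\curl\curl E^s=k^2E^s$. Defining $H^s$ by the curl relation then gives the Maxwell equations~\eqref{eq:maxwell}. Condition~\eqref{eq:E_bc_t} is the PEC condition, and the vector Sommerfeld condition for $E^s$ implies the Silver--Müller condition~\eqref{eq:rad_cond_sm}, again via the representation formula.

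\emph{$1\Leftrightarrow3$.} This is analogous, with the roles of the conditions exchanged. The PEC condition implies $\nu\cdot\curl E=0$ on $\Gamma$, i.e.\ $\nu\cdot H=0$ for the total field, which is~\eqref{eq:H_bc_n}. The second identity, combined with $\nu\times\curl H=-\im\omega\epsilon\,\nu\times E=0$ and $\nu\cdot H=0$, yields~\eqref{eq:H_bc_t}.

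For the converse, set $u=\dive H^s$. The tangential condition combined with the identity gives $\Ppar\curl H\times\nu=0$. The normal component of $\p_\nu$ is not prescribed, so $u$ is not obtained from a Dirichlet condition directly. Instead, taking the surface divergence of $\nu\times\curl H$ together with $\nu\cdot H=0$ yields a Neumann condition $\p_\nu u=0$, using $\curl\curl H=-\Delta H+\nabla\dive H=k^2H+\nabla u$. Exterior Neumann uniqueness then gives $u=0$.

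I expect the derivation of this Neumann condition, and its justification in Hölder regularity, to be the most delicate point of the $3\Rightarrow1$ direction.
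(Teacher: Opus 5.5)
The paper gives no proof of its own here; it defers to Theorems~3.4--3.5 of \cite{burbano2025maxwell}, so there is nothing to compare line by line. Your route is sound and is the natural one. You derive the boundary conditions from pointwise trace identities. For the converses you show that $\dive E^s$ solves a homogeneous radiating exterior Dirichlet problem, and $\dive H^s$ a homogeneous radiating exterior Neumann problem.

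Your first identity is correct, but the second is misstated. The term $\nu\times\curl F\times\nu$ equals $\Ppar\curl F$ under either bracketing, whereas the correct term is $\curl F\times\nu$, because $\curl F\times\nu=(\der F)\nu-(\der F)^\top\nu$. The identity should read $\Ppar\p_\nu F=\curl F\times\nu+\nabla_\Gamma(\nu\cdot F)-\mathscr R\,\Ppar F$; test it with $F=(-x_2,x_1,0)$ on the unit sphere, where $\mathscr R=\Ppar$. The slip is harmless, since you only ever use the vanishing of this term, and $\Ppar a=0$ if and only if $\nu\times a=0$. In particular, given \eqref{eq:H_bc_n}, condition \eqref{eq:H_bc_t} is equivalent to $\nu\times\gamma\curl H=0$ for the total field.

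The regularity point you leave open in $3\Rightarrow1$ is in fact closed by the standing hypothesis that \emph{both} $E^s$ and $H^s$ lie in $C^{1,\alpha}(\R^3\setminus\Omega,\C^3)$.
\begin{itemize}
\item In statement~3, $\curl H^s=-\im\omega\epsilon E^s$ is $C^{1,\alpha}$ up to $\Gamma$. Hence $\nabla u=\curl\curl H^s-k^2H^s$ extends H\"older-continuously to $\Gamma$, and $u=\dive H^s\in C^{1,\alpha}(\R^3\setminus\Omega)$.
\item Adding $\curl\curl H^i-k^2H^i=0$, you get on $\Gamma$ that $\p_\nu u=\nu\cdot\curl W-k^2\,\nu\cdot\gamma H$, where $W=\curl H$ is $C^{1,\alpha}$ near $\Gamma$ from outside.
\item The pointwise identity $\nu\cdot\curl W=-\dive_\Gamma(\nu\times\gamma W)$, together with $\nu\times\gamma W=0$ and $\nu\cdot\gamma H=0$, gives $\p_\nu u=0$ classically.
\item The standard exterior Neumann uniqueness theorem (normal derivative in the sense of uniform convergence) then applies directly. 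No parallel-surface or weak argument is needed.
\end{itemize}

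The same observation removes your concern in $2\Rightarrow1$. There $\curl E^s=\im\omega\mu H^s$ is $C^{1,\alpha}$ up to $\Gamma$, so $\dive E^s$ is also $C^{1,\alpha}$ up to $\Gamma$, and Dirichlet uniqueness applies in its most elementary form.

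You should state explicitly that this is where the hypothesis on the companion field is used. With only $H^s\in C^{1,\alpha}$, the Neumann trace of $\dive H^s$ would involve second derivatives of $H^s$ at $\Gamma$, and the scalar argument would need genuine additional work.
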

\begin{proof} See Theorems 3.4 and 3.5 in~\cite{burbano2025maxwell}.
\end{proof}

\section{Direct electric combined-field-only formulation}\label{sec:electric_field_BIE}
In this section, we derive a direct boundary integral equation formulation for the electric field scattering problem~\eqref{eq:elf_equiv}, and study its existence and uniqueness.

Under the assumption that the incident fields $E^i$ and $H^i$ satisfy~\eqref{eq:maxwell_inc}, we have
$\curl \curl E^i - k^2 E^i = 0$ in $\Omega\subset U.$
Using the vector identity $\curl \curl E^i = \nabla \dive E^i - \Delta E^i$ and the fact that $\dive E^i = 0$ in $\Omega$, it follows that
$\Delta E^i + k^2 E^i = 0$ in $\Omega$. Applying Green's representation theorem (component-wise) to $E^i$ in $\Omega$ then yields
$$
0= \mathcal D[\gamma E^i](x) - \mathcal S[\p_\nu E^i](x),\quad x\in\ED
$$

Similarly, leveraging the fact that $E^s$ satisfies the vector Helmholtz equation~\eqref{eq:vec_Helm_eqn} and the component-wise Sommerfeld radiation condition~\eqref{eq:sommerfeld_condition}, we apply Green's representation formula for the Helmholtz equation to the scattered field $E^s$ in $\ED$, and obtain:
$$
E^s(x) = \mathcal D[\gamma E^s](x) - \mathcal S[\p_\nu E^s](x),\quad x \in \ED.
$$

By adding these two integral representations, we arrive at the following expression:
\begin{equation}\label{eq:rep_for}
E^s(x) =  \mathcal D[\gamma E](x) - \mathcal S[\p_\nu E](x), \qquad x \in \mathbb{R}^3 \setminus \overline{\Omega},
\end{equation}
where we introduce the following notation for the traces of the total field $E:=E^s+E^i$ defined in $(\ED)\cap U$:
$$\gamma E := \gamma E^s + \gamma E^i\in C^{1,\alpha}(\Gamma,\C^3)\quad\text{and}\quad \p_\nu E := \p_\nu E^s + \p_\nu E^i\in C^{0,\alpha}(\Gamma,\C^3).$$

Expressing the boundary conditions~\eqref{eq:E_bc_n} and~\eqref{eq:E_bc_t} in terms of the traces of the total field, we obtain:
$$
\gamma E = \oP_\nu\gamma E\quad\text{and}\quad \p_\nu E = \oP_t\p_\nu E - 2\oH \oP_\nu\gamma E,
$$
where $\oP_\nu$ and $\oP_t$ denote the normal and tangential projections, respectively, as defined in~\eqref{eq:projectors}, and $\oH$ denotes the mean curvature multiplication operator~\eqref{eq:H_op}.

Upon substituting these identities into Green's representation formula~\eqref{eq:rep_for}, we obtain:
\begin{equation}\label{eq:rep_for_E}
E^s(x) = \mathcal D\left[\oP_\nu\gamma E\right](x) - \mathcal S\left[\oP_t\p_\nu E - 2\oH \oP_\nu\gamma E\right](x), \qquad x \in \mathbb{R}^3 \setminus \overline{\Omega}.
\end{equation}

Taking the exterior Dirichlet and Neumann traces on both sides of~\eqref{eq:rep_for_E} yields:
\begin{subequations}\begin{align}
-\gamma E^i =& -\tfrac{1}{2} \oP_\nu\gamma E + (\oK + 2 \oS \oH)\oP_\nu\gamma E - \oS \oP_t\p_\nu E,\label{eq:dir_trace_E}\\
-\p_\nu E^i =&-\tfrac12(\oP_t\p_\nu E - 2\oH\oP_\nu\gamma E)+ (\oT+2\oK'\oH)\oP_\nu\gamma E - \oK'\oP_t\p_\nu E.\label{eq:nue_trace_E}
\end{align}\end{subequations}

By taking the linear combination~\eqref{eq:nue_trace_E}$+\im\eta$\eqref{eq:dir_trace_E} of the Neumann and Dirichlet trace equations with $\eta\in \R\setminus\{0\}$, we arrive at
\begin{equation*}\begin{aligned}
-\tfrac12\{\oP_t\p_\nu E+ (\im\eta\Id- 2\oH)\oP_\nu\gamma E\}+ \{\oT+\im\eta \oK+2(\oK'+\im\eta\oS)\oH\}\oP_\nu\gamma E - (\oK'+\im\eta\oS)\oP_t\p_\nu E=\hspace{1cm}\\-\p_\nu E^i-\im\eta\gamma E^i.
 \end{aligned}\label{eq:DECFOIE}
\end{equation*}

At first glance, the relation above may appear to define an underdetermined BIE, since both the Dirichlet and Neumann traces, $\gamma E$ and $\p_\nu E$, are involved. However, this is not the case, as we now show. Before proceeding, we introduce the function space in which we will seek our direct BIE solutions. For $p, q \in \{0,1\}$ and $\alpha \in (0,1)$---where $\alpha$ corresponds to the $C^{2,\alpha}$-regularity of the surface $\Gamma$---we define the space
\begin{equation}\label{eq:func_space}
\mathcal{C}^{p,q}_{\nu,t}(\Gamma) := \left\{ \varphi:\Gamma\to\C^3\mid \oP_\nu \varphi \in C^{p,\alpha}(\Gamma,\C^3),\ \oP_t\varphi \in C^{q,\alpha}(\Gamma,\C^3) \right\},
\end{equation}
equipped with the norm
\begin{equation}\label{eq:func_space_norm}
\|\varphi\|_{\mathcal C^{p,q}_{\nu,t}(\Gamma)} := \|\oP_\nu \varphi\|_{C^{p,\alpha}(\Gamma,\C^3)} + \|\oP_t\varphi\|_{C^{q,\alpha}(\Gamma,\C^3)}.
\end{equation}

\begin{lemma}\label{lem:banach_space}
$\mathcal{C}^{p,q}_{\nu,t}(\Gamma)$ is a Banach space under the norm~\eqref{eq:func_space_norm}.
\end{lemma}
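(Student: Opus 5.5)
The plan is to identify $\mathcal{C}^{p,q}_{\nu,t}(\Gamma)$ isometrically with a closed subspace of the Banach space $X := C^{p,\alpha}(\Gamma,\C^3)\times C^{q,\alpha}(\Gamma,\C^3)$, equipped with the sum norm. Completeness of $X$ follows from the completeness of each Hölder space, which is standard (\cite[Sec. 4.1]{gilbarg2001elliptic}). We first check that \eqref{eq:func_space_norm} is a norm. Homogeneity and the triangle inequality are inherited from the Hölder norms, because $\oP_\nu$ and $\oP_t$ are linear. Definiteness follows from $\varphi = \oP_\nu\varphi + \oP_t\varphi$ pointwise: if the norm vanishes, both projections vanish and hence $\varphi=0$. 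This identity also shows that every element of $\mathcal{C}^{p,q}_{\nu,t}(\Gamma)$ is a continuous function.

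Next, define the map $\Phi:\mathcal{C}^{p,q}_{\nu,t}(\Gamma)\to X$ by $\Phi\varphi = (\oP_\nu\varphi,\oP_t\varphi)$. By construction $\Phi$ is a linear isometry, so it suffices to show that its range is closed. We claim the range is
\[
Y := \{(a,b)\in X \mid \nu\times a = 0,\ \nu\cdot b = 0 \text{ on } \Gamma\}.
\]
Indeed, $(a,b)\in Y$ is the image of $\varphi = a+b$, since $\oP_\nu(a+b)=a$ and $\oP_t(a+b)=b$ pointwise. The conditions defining $Y$ are pointwise and continuous in the sup norm. Convergence in $C^{p,\alpha}$ and $C^{q,\alpha}$ implies uniform convergence, so any limit in $X$ of a sequence in $Y$ again satisfies $\nu\times a=0$ and $\nu\cdot b=0$. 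Hence $Y$ is closed in $X$.

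An equivalent direct argument runs as follows. Take a Cauchy sequence $\varphi_n$ in $\mathcal{C}^{p,q}_{\nu,t}(\Gamma)$. Then $\oP_\nu\varphi_n\to a$ in $C^{p,\alpha}$ and $\oP_t\varphi_n\to b$ in $C^{q,\alpha}$. Set $\varphi=a+b$ and use uniform convergence to identify $\oP_\nu\varphi=a$ and $\oP_t\varphi=b$. Note that this step uses only the pointwise identities for the projectors, not their boundedness on $C^{1,\alpha}$.

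The only point that needs care is the characterization of the range. It must be written using pointwise algebraic conditions, not by applying $\oP_\nu$ to elements of $C^{0,\alpha}$ and hoping for $C^{1,\alpha}$ regularity. This matters because for $\varphi\in\mathcal{C}^{p,q}_{\nu,t}(\Gamma)$ the full field $\varphi$ need not lie in $C^{\max(p,q),\alpha}$. Once the range is described as above, the proof contains no real obstacle.
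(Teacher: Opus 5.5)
Your proof is correct and is essentially the paper's argument. Like the paper, you take a Cauchy sequence, pass to limits $f$ of $\oP_\nu\varphi_n$ in $C^{p,\alpha}(\Gamma,\C^3)$ and $g$ of $\oP_t\varphi_n$ in $C^{q,\alpha}(\Gamma,\C^3)$, and reassemble $\varphi=f+g$; your isometric embedding onto a closed subspace of the product space is a repackaging of this. The only real difference is how $\oP_\nu(f+g)=f$ and $\oP_t(f+g)=g$ are justified: the paper uses closedness of the ranges of the bounded projectors in the H\"older spaces, while you use that the pointwise constraints $\nu\times a=0$ and $\nu\cdot b=0$ survive uniform limits, which is slightly more elementary because it does not need boundedness of the projectors on $C^{1,\alpha}$.
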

\begin{proof}
That~\eqref{eq:func_space_norm} defines a norm follows readily: if $\|\varphi\|_{\mathcal C^{p,q}_{\nu,t}(\Gamma)}=0$ then $\oP_\nu\varphi=0$ and $\oP_t\varphi=0$, so $\varphi = \oP_\nu\varphi+\oP_t\varphi = 0$; homogeneity and the triangle inequality are inherited from those of the Hölder norms.

For completeness, let $\{\varphi_n\}\subset \mathcal{C}^{p,q}_{\nu,t}(\Gamma)$ be a Cauchy sequence. By definition~\eqref{eq:func_space_norm}, $\{\oP_\nu\varphi_n\}$ is Cauchy in $C^{p,\alpha}(\Gamma,\C^3)$ and $\{\oP_t\varphi_n\}$ is Cauchy in $C^{q,\alpha}(\Gamma,\C^3)$. Since both Hölder spaces are Banach, there exist limits $f\in C^{p,\alpha}(\Gamma,\C^3)$ and $g\in C^{q,\alpha}(\Gamma,\C^3)$ such that $\oP_\nu\varphi_n\to f$ and $\oP_t\varphi_n\to g$. Define $\varphi:=f+g$. Since $\oP_\nu+\oP_t=\Id$, we have $\oP_\nu\varphi = \oP_\nu f + \oP_\nu g = f\in C^{p,\alpha}(\Gamma,\C^3)$ and $\oP_t\varphi = \oP_t f + \oP_t g = g \in C^{q,\alpha}(\Gamma,\C^3)$, where we used that $f = \lim_n \oP_\nu\varphi_n$ lies in the range of $\oP_\nu$ and $g = \lim_n \oP_t\varphi_n$ lies in the range of $\oP_t$, which are closed subspaces of $C^{p,\alpha}(\Gamma,\C^3)$ and $C^{q,\alpha}(\Gamma,\C^3)$, respectively. Hence $\varphi\in\mathcal{C}^{p,q}_{\nu,t}(\Gamma)$, and $\|\varphi_n - \varphi\|_{\mathcal{C}^{p,q}_{\nu,t}(\Gamma)} = \|\oP_\nu\varphi_n - f\|_{C^{p,\alpha}} + \|\oP_t\varphi_n - g\|_{C^{q,\alpha}} \to 0$.
\end{proof}

Our direct electric combined-field-only integral equation (D-ECFOIE) is then given by:
\begin{subequations}\label{eq:electric_BIE}\begin{equation}\begin{aligned}
-\tfrac12\{\varphi_t+ (\im\eta\Id- 2\oH)\varphi_\nu\}+ \{\oT+\im\eta \oK+2(\oK'+\im\eta\oS)\oH\}\varphi_\nu - (\oK'+\im\eta\oS)\varphi_t=f,
 \end{aligned}
 \end{equation}
where the unknown density is 
\begin{equation}\label{eq:unknown_density}
\varphi := \varphi_t + \varphi_\nu \in \mathcal{C}_{\nu,t}^{1,0}(\Gamma),\quad \varphi_t:=\oP_t(\p_\nu E),\quad  \varphi_\nu:=\oP_\nu(\gamma E)\end{equation} 
and the right-hand-side is given by 
\begin{equation}\label{eq:f_datum}
 f:=-\p_\nu E^i-\im\eta\gamma E^i\in C^{0,\alpha}(\Gamma,\C^3).
\end{equation}\end{subequations}
The scattered electric field can then be retrieved from the BIE solution via the representation formula~\eqref{eq:rep_for_E}.

Clearly, by leveraging the surface projection operators, the equation above can be recast in the form
\begin{equation}\label{eq:L_e_op}
\oL_e\varphi = f,
\end{equation}
where $\oL_e: \mathcal C^{1,0}_{\nu,t}(\Gamma)\to C^{0,\alpha}(\Gamma,\C^3)$
is the operator defined by
\begin{equation}\label{eq:L_electric}
\oL_e := -\tfrac12\{\oP_t+ (\im\eta\Id- 2\oH)\oP_\nu\}+ \{\oT+\im\eta \oK+2(\oK'+\im\eta\oS)\oH\}\oP_\nu - (\oK'+\im\eta\oS)\oP_t.
\end{equation}

We are now ready to establish the uniqueness of solutions to the above BIE:
\begin{theorem}\label{eq:unique_E} The D-ECFOIE given by~\eqref{eq:electric_BIE} admits at most one solution $\varphi \in \mathcal{C}_{\nu,t}^{1,0}(\Gamma)$ for all wavenumbers $k > 0$ and all coupling parameters $\eta \in \mathbb{R} \setminus \{0\}$.
\end{theorem}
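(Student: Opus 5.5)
The plan is to turn a solution of the homogeneous equation into a field that solves the homogeneous PEC problem outside $\Omega$ and a homogeneous impedance problem inside $\Omega$. Both problems have only the trivial solution, and this forces $\varphi=0$.

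\textbf{Step 1: the auxiliary field.} Let $\varphi=\varphi_t+\varphi_\nu\in\mathcal C^{1,0}_{\nu,t}(\Gamma)$ satisfy $\oL_e\varphi=0$, where $\varphi_\nu=\oP_\nu\varphi\in C^{1,\alpha}$ and $\varphi_t=\oP_t\varphi\in C^{0,\alpha}$. Set $\psi:=\varphi_t-2\oH\varphi_\nu\in C^{0,\alpha}(\Gamma,\C^3)$ and define, following~\eqref{eq:rep_for_E},
\begin{equation*}
u:=\mathcal D[\varphi_\nu]-\mathcal S[\psi]\quad\text{in }\R^3\setminus\Gamma .
\end{equation*}
Then $u$ solves the vector Helmholtz equation in $\Omega$ and in $\ED$, and it satisfies the componentwise Sommerfeld condition~\eqref{eq:sommerfeld_condition}. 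Because $\varphi_\nu\in C^{1,\alpha}$ and $\psi\in C^{0,\alpha}$, the classical Hölder regularity of layer potentials on $C^{2,\alpha}$ surfaces (Colton--Kress) gives $C^{1,\alpha}$ extensions of $u$ to $\overline\Omega$ and to $\R^3\setminus\Omega$. Checking that this regularity holds up to the boundary, so that Theorems~\ref{eq:equiv} and~\ref{eq:reg_solution} apply, is the technical point I expect to require the most care.

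\textbf{Step 2: the interior traces reproduce $\oL_e$.} By the jump relations~\eqref{eq:jumps_LP}, the interior traces are
\begin{align*}
\gamma^-u&=-\tfrac12\varphi_\nu+\oK\varphi_\nu-\oS\psi,\\
\p^-_\nu u&=\oT\varphi_\nu-\tfrac12\psi-\oK'\psi .
\end{align*}
Substituting $\psi=\varphi_t-2\oH\varphi_\nu$, a direct comparison with~\eqref{eq:L_electric} gives $\p^-_\nu u+\im\eta\,\gamma^-u=\oL_e\varphi=0$. So $u$ solves the interior Helmholtz problem with a homogeneous impedance condition.

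\textbf{Step 3: $u$ vanishes in $\Omega$.} Apply Green's first identity componentwise in $\Omega$:
\begin{equation*}
\int_\Omega\big(|\nabla u|^2-k^2|u|^2\big)\de x=\int_\Gamma \overline{\gamma^-u}\cdot\p^-_\nu u\de s=-\im\eta\int_\Gamma|\gamma^-u|^2\de s .
\end{equation*}
The left side is real and $\eta\in\R\setminus\{0\}$, so taking imaginary parts gives $\gamma^-u=0$. The impedance condition then gives $\p^-_\nu u=0$. Green's representation formula in $\Omega$ now yields $u\equiv0$ in $\Omega$.

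\textbf{Step 4: the exterior traces and the homogeneous PEC problem.} The jump relations give $\gamma^+u-\gamma^-u=\varphi_\nu$ and $\p^+_\nu u-\p^-_\nu u=\psi$. Since the interior traces vanish, $\gamma^+u=\varphi_\nu$ and $\p^+_\nu u=\varphi_t-2\oH\varphi_\nu$. Consequently:
\begin{itemize}
\item $\oP_t\gamma u=\oP_t\varphi_\nu=0$, because $\varphi_\nu$ is normal;
\item $\oP_\nu(\p_\nu u+2\mathscr H\gamma u)=\oP_\nu(\varphi_t-2\oH\varphi_\nu+2\oH\varphi_\nu)=\oP_\nu\varphi_t=0$, because $\varphi_t$ is tangential.
\end{itemize}
Thus $u|_{\ED}$ satisfies~\eqref{eq:elf_equiv} with $E^i=0$. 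By the equivalence Theorem~\ref{eq:equiv} (taking $H^s=(\im\omega\mu)^{-1}\curl u$ and zero incident fields), $(u,H^s)$ solves the PEC problem~\eqref{eq:scatt_problem} with zero data. The uniqueness part of Theorem~\ref{eq:reg_solution} then gives $u=0$ in $\ED$.

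\textbf{Step 5: conclusion.} Since $u=0$ outside $\Omega$, we get $\varphi_\nu=\gamma^+u=0$ and $\psi=\p^+_\nu u=0$, hence $\varphi_t=\psi+2\oH\varphi_\nu=0$. Therefore $\varphi=0$, which proves uniqueness for every $k>0$ and every $\eta\in\R\setminus\{0\}$.
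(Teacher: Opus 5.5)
Your proposal is correct and follows the paper's proof step for step. You use the same potential $\mathcal D[\varphi_\nu]-\mathcal S[\varphi_t-2\oH\varphi_\nu]$, the interior impedance condition $\p^-_\nu u+\im\eta\gamma^-u=0$ to force $u=0$ in $\Omega$, the jump relations to obtain the homogeneous conditions~\eqref{eq:E_bc_t}--\eqref{eq:E_bc_n} outside, and Theorems~\ref{eq:equiv} and~\ref{eq:reg_solution} to conclude. The only differences are that you write out the Green's-identity argument for interior Robin uniqueness, which the paper only cites, and that you argue directly rather than by contradiction.
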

\begin{proof}
Suppose there exists a non-trivial $\varphi \in \mathcal{C}^{1,0}_{\nu,t}(\Gamma)$ satisfying the homogeneous BIE:
\begin{equation}\label{eq:homo}\begin{aligned}
-\tfrac12\{\varphi_t+ (\im\eta\Id- 2\oH)\varphi_\nu\}+ \{\oT+\im\eta \oK+2(\oK'+\im\eta\oS)\oH\}\varphi_\nu - (\oK'+\im\eta\oS)\varphi_t=0.
 \end{aligned}
\end{equation}

Let $\varphi_\nu := \oP_\nu \varphi$ and $\varphi_t := \oP_t \varphi$, and define the layer potential: 
\begin{equation}\label{eq:layer_potential_uniqueness}
U(x) :=  \mathcal D[\varphi_\nu](x) - \mathcal S[\varphi_t - 2\oH\varphi_\nu](x),\quad x\in\R^3\setminus\Gamma.
\end{equation}

By taking the traces of the layer potential~\eqref{eq:layer_potential_uniqueness} and applying the jump relations~\eqref{eq:jumps_LP}, it follows that
\begin{equation}\label{eq:traces_pot}
\gamma^\pm U = \pm\tfrac12\varphi_\nu + (\oK+2\oS\oH)\varphi_\nu - \oS\varphi_t\quad\text{and}\quad
\partial_\nu^\pm U = \pm\tfrac12(\varphi_t-2\oH\varphi_\nu) + (\oT+2\oK'\oH)\varphi_\nu - \oK'\varphi_t.
\end{equation}

Therefore, it is clear from this expression that~\eqref{eq:homo} corresponds to the Robin boundary condition:
$\p^-_\nu U+\im\eta\gamma^-U =0.$
Since the restriction $U|_{\Omega} \in C^2(\Omega, \mathbb{C}^3) \cap C^{1,\alpha}(\overline{\Omega}, \mathbb{C}^3)$ of  the layer potential $U$ to the interior domain $\Omega$ satisfies the Helmholtz equation $\Delta U|_{\Omega} + k^2 U|_{\Omega} = 0$ in $\Omega$,
it follows from the uniqueness of the interior Robin boundary value problem that $U|_{\Omega}=0$ and hence $U = 0$ in $\overline{\Omega}$. Consequently, both interior traces $\gamma^-U$ and $\p_\nu^- U$ vanish. Using the jump relations from~\eqref{eq:traces_pot}, we then obtain
\begin{equation}\label{eq:rel_traces}
\gamma^+U-\gamma^-U=\gamma^+U=\varphi_\nu\quad\text{and}\quad \p_\nu^+U-\p_\nu^-U=\p_\nu^+U=\varphi_t-2\oH\varphi_\nu.
\end{equation}

From the expressions for the exterior traces obtained above, we observe that both $\varphi_\nu$ and $\varphi_t$ vanish if and only if the exterior Dirichlet and Neumann traces of the potential $U$ vanish. To show that the exterior traces vanish, we note that from~\eqref{eq:rel_traces} it follows that
$$
\oP_t\gamma^+ U=0\quad\text{and}\quad \oP_\nu(\p^+_\nu U+2\oH\gamma^+ U) = 0.
$$

Then, given that in addition, $U$ in~\eqref{eq:layer_potential_uniqueness} satisfies the Helmholtz equation~\eqref{eq:vec_Helm_eqn} as well as the radiation condition~\eqref{eq:sommerfeld_condition}, by the equivalence between the Maxwell and vector Helmholtz formulations established in Theorem~\ref{eq:equiv}, we conclude that the restriction $U|_{\ED} \in C^{2}(\ED, \mathbb{C}^3)\cap C^{1,\alpha}(\R^3\setminus\Omega,\C^3)$ of the layer potential~$U$ to the exterior domain solves the PEC scattering problem~\eqref{eq:scatt_problem} with homogeneous boundary data, i.e., $\oP_t\gamma E^i=0$. Since this problem admits a unique solution by Theorem~\ref{eq:reg_solution}, it follows that $U = 0$ in $\mathbb{R}^3 \setminus \Omega$.

This leads to a contradiction, because the boundary data $\varphi = \oP_\nu\gamma^+ U + \oP_t \p_\nu^+ U$ must then vanish identically, implying that the original density $\varphi$ is zero. This completes the proof of uniqueness.
\end{proof}

We now present the following lemma, which is used to  establish the existence of solutions to the integral equation~\eqref{eq:DECFOIE} by means of the Fredholm alternative.
\begin{lemma} \label{lem:comm_bounded} The  operators
\begin{subequations}\begin{align}
\oC_\nu :C^{0,\alpha}(\Gamma,\C^3)\to C^{1,\alpha}(\Gamma,\C^3),&& \oC_\nu \varphi:=&~\nu S_0(\nu\cdot\varphi)-\oS_0\oP_\nu\varphi,\label{eq:C_nu}\\
\oC_t:C^{0,\alpha}(\Gamma,\C^3)\to C^{1,\alpha}(\Gamma,\C^3),&& \oC_t\varphi :=&~\oP_t\oS_0\varphi-\oS_0\oP_t\varphi,\label{eq:C_t}
\end{align}\label{eq:commutators}\end{subequations}
where $S_0: C^{0,\alpha}(\Gamma) \to C^{1,\alpha}(\Gamma)$ denotes the scalar Laplace single-layer operator, admit the integral representations:
\begin{align*}
\oC_\nu[\varphi](x) =&~\int_{\Gamma}\frac{\nu(y)\cdot\varphi(y)}{4\pi|x-y|}\left\{\nu(x)-\nu(y)\right\}\de s(y),\\
\oC_t[\varphi](x) =&~-\int_{\Gamma}\frac{1}{4\pi|x-y|}\cdot\{\varphi(y)\cdot\nu(x)\nu(x)-\varphi(y)\cdot\nu(y)\nu(y)\}\de s(y),\quad x\in\Gamma,
\end{align*}
and are compact.
\end{lemma}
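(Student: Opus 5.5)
The plan is to prove compactness directly as maps $C^{0,\alpha}(\Gamma,\C^3)\to C^{1,\alpha}(\Gamma,\C^3)$. I will show that both operators are in fact bounded from the larger space $C(\Gamma,\C^3)$ into $C^{1,\alpha}(\Gamma,\C^3)$. Compactness then follows by composing with the compact embedding $C^{0,\alpha}(\Gamma,\C^3)\hookrightarrow C(\Gamma,\C^3)$ (Arzelà--Ascoli).

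\textbf{Step 1: integral representations.} These come from unwinding the definitions, since $S_0$ and $\oS_0$ have the same kernel $1/(4\pi|x-y|)$. For the first operator,
\[
\oC_\nu\varphi=\nu S_0\psi-\oS_0(\nu\psi),\qquad \psi:=\nu\cdot\varphi,
\]
which is the commutator of multiplication by $\nu$ with $S_0$, applied to the scalar $\psi$. Its kernel is $\{\nu(x)-\nu(y)\}/(4\pi|x-y|)$. For the second operator, write $\oP_t=\Id-\nu\nu^\top$ and use that $\oS_0$ acts componentwise. Then
\[
\oC_t\varphi=-\big(\nu\nu^\top\oS_0\varphi-\oS_0(\nu\nu^\top\varphi)\big),
\]
which gives exactly the stated formula. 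In both cases the kernel has the form
\[
\frac{A(x)-A(y)}{4\pi|x-y|},\qquad A\in\{\nu,\ \nu\nu^\top\}\subset C^{1,\alpha}(\Gamma).
\]

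\textbf{Step 2: boundedness $C(\Gamma)\to C^{1,\alpha}(\Gamma)$.} This is the main obstacle. The commutator structure lets me split the operator as
\[
[A,\oS_0]\psi = A\,\oS_0\psi-\oS_0(A\psi).
\]
I differentiate tangentially in $x$ through a local $C^{2,\alpha}$ parametrization. The product rule gives three contributions.
\begin{itemize}
\item The term $(\nabla_\Gamma A)(x)\,\oS_0\psi(x)$. Here $\nabla_\Gamma A\in C^{0,\alpha}$, and $\oS_0\psi\in C^{0,\gamma}$ for every $\gamma<1$ when $\psi$ is merely continuous, since the kernel is weakly singular of order $-1$. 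Hence this term lies in $C^{0,\alpha}$ with norm $\lesssim\|\psi\|_{C}$.
\item The term
\[
\int_\Gamma\{A(x)-A(y)\}\,\nabla_x\frac{1}{4\pi|x-y|}\,\psi(y)\de s(y).
\]
Its kernel is $O(|x-y|^{-1})$, because $|A(x)-A(y)|\lesssim|x-y|$. Its $x$-derivative is $O(|x-y|^{-2})$.
\end{itemize}
For the second term I will use the standard splitting of the Hölder difference quotient at $x_1,x_2$ with $h=|x_1-x_2|$. On the ball $|y-x_1|<2h$ I bound the integral directly, which gives $O(h)$. Outside that ball I use the mean value theorem, which gives $\int_{|y-x_1|>2h} h\,|x_1-y|^{-2}\de s\lesssim h\log(1/h)$. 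Together this yields $C^{0,\gamma}$ for every $\gamma<1$, in particular $C^{0,\alpha}$, again with norm $\lesssim\|\psi\|_C$.

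Two points need care here. First, I must check that the tangential differentiation under the integral is legitimate; I will do this by the usual cutoff and limiting argument for weakly singular kernels. Second, I must check that only $C^{1,\alpha}$ regularity of $A$ is used, so that the surface regularity $C^{2,\alpha}$ suffices; in particular the Jacobian of the parametrization and $\nabla_\Gamma A$ are $C^{0,\alpha}$. Combining the two bullets gives $\|\oC_\nu\varphi\|_{C^{1,\alpha}}+\|\oC_t\varphi\|_{C^{1,\alpha}}\lesssim\|\varphi\|_{C(\Gamma)}$.

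\textbf{Step 3: conclusion.} A bounded $C^{0,\alpha}$ sequence has, by Arzelà--Ascoli, a subsequence converging uniformly. Step 2 then maps this subsequence to a Cauchy sequence in $C^{1,\alpha}(\Gamma,\C^3)$. Therefore $\oC_\nu,\oC_t:C^{0,\alpha}(\Gamma,\C^3)\to C^{1,\alpha}(\Gamma,\C^3)$ are compact, as claimed.
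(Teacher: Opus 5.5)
Your proposal is correct and follows essentially the paper's argument. You differentiate the commutator kernel under the integral sign, split off the term $(\nabla_\Gamma A)\,\oS_0\psi$, and show that the remaining kernel is weakly singular with the Lipschitz-type difference bound (the hypotheses of Colton--Kress Thm.~2.7, which the paper cites and you reprove by near/far splitting). The only real difference is where compactness enters: you prove the stronger bound $C(\Gamma,\C^3)\to C^{1,\alpha}(\Gamma,\C^3)$ and use Arzel\`a--Ascoli on the input side, whereas the paper maps the gradient into $C^{0,\beta}$ with $\beta>\alpha$ and uses the compact embedding $C^{0,\beta}\hookrightarrow C^{0,\alpha}$ on the output side (and it reduces $\oC_t$ to $\oC_\nu$ rather than treating it directly as the commutator with $\nu\nu^\top$).
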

\begin{proof} 
  The well-definedness of the operators~\eqref{eq:commutators} and their integral representations follow directly by substituting the integral kernel of the Laplace single-layer operator~\eqref{eq:single_op} (with $k=0$) into the definitions~\eqref{eq:C_nu} and~\eqref{eq:C_t}.

In order to prove that $\oC_\nu$ in~\eqref{eq:C_nu} is compact, we examine its Cartesian components, namely $\oC_{\nu,j}$ for $j \in \{1,2,3\}$. It hence suffices to show that the operators $\delta \oC_{\nu,j}: C^{0,\alpha}(\Gamma,\C^3) \to C^{0,\alpha}(\Gamma,\C^3)$, defined by $\delta \oC_{\nu,j}\varphi := \nabla_\Gamma(\oC_{\nu,j} \varphi),$ are compact.  

Let $\varphi\in C^{0,\alpha}(\Gamma,\C^3)$. Applying the surface gradient to the integral representation of $\oC_{\nu,j}$ and using the well-known property that the tangential derivative of the single-layer operator $S_0$ can be computed by differentiation under the integral sign (see, e.g.~\cite[Thm. 2.17]{COLTON:1983}), we obtain
\begin{equation}\begin{split}
\delta \oC_{\nu,j}[\varphi](x) = \int_{\Gamma} \frac{\varphi(y)\cdot\nu(y)}{4\pi|x - y|^3} \left\{(x-y)-\nu(x)\cdot(x-y)\nu(x)\right\} \left\{ \nu_j(x) - \nu_j(y) \right\} \de s(y)+\hspace{2cm}\\
 \nabla_\Gamma \nu_j(x)\int_{\Gamma} \frac{\varphi(y)\cdot\nu(y)}{4\pi|x - y|}   \de s(y), \quad x \in \Gamma,
\end{split}\label{eq:comp_diff} \end{equation}
where all the surface integrals are to be understood in the improper sense. Note that they are well defined by virtue of the fact that each of the integrands is weakly singular. 

In view of the mapping properties of the Laplace single-layer operator $S_0$ and the fact that $\nabla_\Gamma \nu_j \in C^{0,\alpha}(\Gamma, \mathbb{C}^3)$, we readily conclude that the last term in~\eqref{eq:comp_diff} defines a compact operator on $C^{0,\alpha}(\Gamma, \mathbb{C}^3)$. On the other hand, the mapping properties of the operator defined by the first integral in~\eqref{eq:comp_diff} are governed by the smoothness of the kernels defining each of the operator's Cartesian components, which are given by
$$
\kappa_{i,j}(x,y) := \frac{\left\{(x_i-y_i)-\nu(x)\cdot(x-y)\nu_i(x)\right\} \left\{ \nu_j(x) - \nu_j(y) \right\}}{|x-y|^3},\quad \Gamma\ni x\neq y\in\Gamma,\quad i,j\in\{1,2,3\}.
$$
where $\nu_j \in C^{1,\alpha}(\Gamma)$ denote the components of the unit normal vector.
Clearly, since the Cartesian components of the unit normal $\nu_j$ are Lipschitz continuous, there exists a constant $L > 0$ such that 
\begin{equation}\label{eq:cond_1}
|\kappa_{i,j}(x,y)| \leq L |x - y|^{-1}
\end{equation}
for all $x, y \in \Gamma$, $x\neq y$.
Moreover, for all $x,y,z\in\Gamma$, we have
\begin{equation*}\begin{split}
\kappa_{i,j}(x,y)-\kappa_{i,j}(z,y) =&~ \frac{(x_i-z_i)\left\{ \nu_j(x) - \nu_j(y) \right\}}{|x-y|^3}-\frac{\left\{\nu(x)\cdot(x-z)\nu_i(x)\right\} \left\{ \nu_j(x) - \nu_j(y) \right\}}{|x-y|^3}\\
&-\frac{\left\{(\nu(x)-\nu(z))\cdot(z-y)\nu_i(x)\right\} \left\{ \nu_j(x) - \nu_j(y) \right\}}{|x-y|^3}\\
&-\frac{\left\{\nu(z)\cdot(z-y)(\nu_i(x)-\nu_i(z)\right\} \left\{ \nu_j(x) - \nu_j(y) \right\}}{|x-y|^3}\\
&+\frac{\left\{(z_i-y_i)-\nu(z)\cdot(z-y)\nu_i(z)\right\} \left\{ \nu_j(x) - \nu_j(z) \right\}}{|x-y|^3}\\
&+\left\{(z_i-y_i)-\nu(z)\cdot(z-y)\nu_i(z)\right\} \left\{ \nu_j(z) - \nu_j(y) \right\}\left\{\frac{1}{|x-y|^3}-\frac{1}{|z-y|^3}\right\}.
\end{split}\end{equation*}
Taking absolute value  and applying the triangle inequality, we arrive at
\begin{equation}\begin{split}
|\kappa_{i,j}(x,y)-\kappa_{i,j}(z,y)| \leq&~ 2\frac{|x-z|}{|x-y|^2}+2\frac{|x-z||z-y| }{|x-y|^2}+2\frac{|z-y||x - z| }{|x-y|^3}\\
&+2|z-y|^2\left|\frac{1}{|x-y|^3}-\frac{1}{|z-y|^3}\right|
\end{split}\label{eq:interm_2}\end{equation}
Then, under the assumption that $2|x - z| \leq |x - y|$, we have (see \cite[Eq.~(72)]{burbano2025maxwell} with $m = 3$):
$$
|z-y|\leq \frac32|x-y|\quad\text{and}\quad \left|\frac{1}{|x-y|^3}-\frac{1}{|z-y|^3}\right|\leq 48\frac{|x-z|}{|x-y|^{4}}.
$$
Using these inequalities in~\eqref{eq:interm_2} we obtain  
\begin{equation}\label{eq:cond_2}\begin{split}
|\kappa_{i,j}(x,y)-\kappa_{i,j}(z,y)| \leq&~ 2\frac{|x-z|}{|x-y|^2}+3\frac{|x-z| }{|x-y|}+3\frac{|x - z| }{|x-y|^2}+216\frac{|x-z|}{|x-y|^{2}}.
\end{split}\end{equation}
It then follows that the kernels $\kappa_{i,j}$ satisfy all the conditions in~\cite[Thm.2.7]{COLTON:1983}, so we can conclude that the first integral in~\eqref{eq:comp_diff} defines a bounded operator from $C^{0,\alpha}(\Gamma, \C^3)$ to $C^{0,\beta}(\Gamma, \C^3)$ for all $\beta \in (\alpha, 1]$. Then, using the compact embedding of $C^{0,\beta}(\Gamma, \mathbb{C}^3)$ into $C^{0,\alpha}(\Gamma, \mathbb{C}^3)$ for any $\beta > \alpha$~\cite[Thm. 2.5]{COLTON:1983}, we conclude that each operator $\delta \oC_{\nu,j}$ for $j \in \{1,2,3\}$ is compact on $C^{0,\alpha}(\Gamma, \mathbb{C}^3)$. Hence, $\oC_\nu$ is compact as an operator from $C^{0,\alpha}(\Gamma, \mathbb{C}^3)$ to $C^{1,\alpha}(\Gamma, \mathbb{C}^3)$.

The compactness of the operator $\oC_t: C^{0,\alpha}(\Gamma,\mathbb{C}^3) \to C^{1,\alpha}(\Gamma,\mathbb{C}^3)$ can be established by noting that it can be expressed as
$$
\oC_t[\varphi](x)=-\nu(x)\int_{\Gamma}\frac{\varphi(y)\cdot\{\nu(x)-\nu(y)\}}{4\pi|x-y|} \de s(y)-\int_{\Gamma}\frac{\varphi(y)\cdot\nu(y)}{4\pi|x-y|}\{\nu(x)-\nu(y) \}\de s(y)
$$
where the second term is exactly $\oC_\nu\varphi$, and the first term can be analyzed analogously by exploiting the smoothness of the unit normal vector.

This completes the proof. 
\end{proof}

We are now ready to state and prove the well-posedness of~\eqref{eq:electric_BIE}.
\begin{theorem}\label{thm:well_poss_E}  
The D-ECFOIE~\eqref{eq:electric_BIE} is uniquely solvable in $\mathcal{C}^{1,0}_{\nu,t}(\Gamma)$ for all wavenumbers $k>0$ and coupling parameters $\eta\in\R\setminus\{0\}$.
\end{theorem}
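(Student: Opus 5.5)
Uniqueness is already given by Theorem~\ref{eq:unique_E}, so only existence is needed. Since $\oL_e$ maps the mixed space $\mathcal{C}^{1,0}_{\nu,t}(\Gamma)$ into $C^{0,\alpha}(\Gamma,\C^3)$ and contains the hypersingular operator $\oT$, the Fredholm alternative cannot be applied to it directly. The plan is to precondition on the left with a Laplace single-layer type operator, so that the composed operator becomes ``identity plus compact'' on a single Hölder space. The regularizer is tailored to the two components. The normal part $\varphi_\nu$ carries the operator $-\tfrac12\im\eta\Id+\oT$, for which the Calderón identity $\oS_0\oT_0=-\tfrac14\Id+\oK_0^2$ is the relevant tool. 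The tangential part $\varphi_t$ carries $-\tfrac12\Id$ plus compact terms. Concretely, I would consider $\oR:=\oP_t+\oS_0$, or more precisely a projected version $\oR:=\oP_t+\oS_0\oP_\nu\ldots$. I would then analyze $\oL_e$ acting on the unknown $\psi:=\varphi_\nu+\varphi_t$ through a change of variables that places both components in $C^{0,\alpha}$. Writing $\varphi_\nu=\oS_0\oP_\nu\chi$-type substitutions is awkward, so I would instead left-multiply: study $\oA:=(\oP_t+\oP_\nu\oS_0)\,\oL_e$, or alternatively $\oS_0\oL_e$ restricted suitably.

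My concrete choice is to apply $\oS_0$ to the equation and work with $\oS_0\oL_e:\mathcal{C}^{1,0}_{\nu,t}(\Gamma)\to C^{1,\alpha}(\Gamma,\C^3)$. On the normal component, $\oS_0\oT\varphi_\nu=\oS_0\oT_0\varphi_\nu+\oS_0(\oT-\oT_0)\varphi_\nu=-\tfrac14\varphi_\nu+$ compact, using Calderón and the compactness of $\oK_0$ and $\oT-\oT_0$. All remaining terms on $\varphi_\nu$ ($\oS_0\oH$, $\oS_0\oK$, $\oS_0\oK'\oH$, etc.) are compact from $C^{1,\alpha}$ to $C^{1,\alpha}$ by the smoothing of $\oS_0$ together with Theorem~\ref{eq:reg_solution}'s predecessor (compactness of $\oS,\oK,\oK'$). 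On the tangential component we get $-\tfrac12\oS_0\varphi_t$ plus compact terms from $C^{0,\alpha}$ to $C^{1,\alpha}$. This piece is not of second kind, so in the tangential direction I would not apply $\oS_0$. I would therefore use the mixed regularizer $\oR:=\oP_t+\oP_\nu\oS_0$, hmm, but the range is the issue. The cleanest route is to take the regularized operator $\oB:=\oP_\nu\oS_0\oL_e\oP_\nu+\oP_t\oL_e\oP_t+\ldots$, splitting $\oS_0\oL_e$ via the commutators $\oC_\nu,\oC_t$ of Lemma~\ref{lem:comm_bounded}. Those commutators let $\oP_\nu$ and $\oP_t$ be moved past $\oS_0$ modulo compact operators from $C^{0,\alpha}$ to $C^{1,\alpha}$. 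The result is that the principal part becomes block-diagonal in the $(\nu,t)$ decomposition: $-\tfrac14$ on the normal block and $-\tfrac12$ on the tangential block, with off-diagonal blocks compact between the appropriate spaces. In particular, $\oP_t\oT\varphi_\nu$ is controlled by $\oT_0\oP_\nu$ being expressible through curvature terms.

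The main obstacle is this cross term. $\oP_t\oL_e\varphi_\nu$ contains $\oP_t\oT\varphi_\nu$, which is only bounded $C^{1,\alpha}\to C^{0,\alpha}$, not compact. My plan is to write $\varphi_\nu=\nu\,\phi$ with scalar $\phi\in C^{1,\alpha}$ and use the Maue-type identity for $\oT_0(\nu\phi)$. This shows that $\oP_t\oT_0(\nu\phi)=\oP_t(\nu T_0\phi)+$ terms with kernels involving $\nu(x)-\nu(y)$, and the first term vanishes. Kernels of this type are handled exactly as in the proof of Lemma~\ref{lem:comm_bounded}, via estimates like \eqref{eq:cond_1}--\eqref{eq:cond_2} and \cite[Thm.~2.7]{COLTON:1983}. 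This yields a compact remainder.

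Once the regularized operator is shown to be $c\,\Id+$compact with $c$ invertible, on the Banach space $\mathcal{C}^{1,0}_{\nu,t}(\Gamma)$ (Lemma~\ref{lem:banach_space}), the Fredholm alternative applies. Injectivity of the regularized operator follows from the injectivity of $\oL_e$ (Theorem~\ref{eq:unique_E}) together with the injectivity of the regularizer: $\oS_0$ is injective and $\oP_t+\oP_\nu$ is the identity. This gives existence, and hence unique solvability for all $k>0$ and all $\eta\neq0$.
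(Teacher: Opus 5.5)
Your argument takes a genuinely different route from the paper's. It can be completed, but two steps fail as written.

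\textbf{Your route.} Read your $\oB$ as a composition $\oM\oL_e$ with $\oM:=\oP_\nu\oS_0+\oP_t:C^{0,\alpha}(\Gamma,\C^3)\to\mathcal C^{1,0}_{\nu,t}(\Gamma)$. For this reading, the ``$\ldots$'' must contain the blocks $\oP_\nu\oS_0\oL_e\oP_t$ and $\oP_t\oL_e\oP_\nu$; otherwise $\oB$ has no relation to $\oL_e\varphi=f$. You then get $\oM\oL_e=-\tfrac14\oP_\nu-\tfrac12\oP_t+{}$compact on $\mathcal C^{1,0}_{\nu,t}(\Gamma)$.

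\textbf{First gap: injectivity of $\oM$.} It does not follow from ``$\oS_0$ is injective and $\oP_t+\oP_\nu=\Id$''. If $\oM g=0$, then $\oP_t g=0$ and $\nu\cdot\oS_0(\nu\rho)=0$ with $\rho:=\nu\cdot g$. But $\oS_0(\nu\rho)$ could be a nonzero tangential field. You need the positivity argument of Lemma~\ref{lemm:precond_H}. Writing $\nu_i$ for the Cartesian components of $\nu$, the quantity $0=(\nu\cdot\oS_0(\nu\rho),\rho)_{L^2(\Gamma)}=\sum_{i=1}^3(S_0(\nu_i\rho),\nu_i\rho)_{L^2(\Gamma)}$ forces $\rho=0$. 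This injectivity is used twice. It gives injectivity of $\oM\oL_e$, and it lets you pass from the Fredholm solution of $\oM\oL_e\varphi=\oM f$ back to $\oL_e\varphi=f$, a step you do not state.

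\textbf{Second gap: the cross term.} You rightly single out $\oP_t\oT_0\oP_\nu$ as the crux. Naively subtracting $\nu(x)T_0\phi$ leaves the kernel $\partial_{\nu(x)}\partial_{\nu(y)}G_0(x,y)\,\{\nu(y)-(\nu(x)\cdot\nu(y))\nu(x)\}=O(|x-y|^{-2})$. This is a Cauchy-type principal-value kernel, not covered by \eqref{eq:cond_1}--\eqref{eq:cond_2} or \cite[Thm.~2.7]{COLTON:1983}. The compactness $C^{1,\alpha}\to C^{0,\alpha}$ is true, but you must genuinely use Maue's identity $\oT_0=\pm\curl_\Gamma\oS_0\overrightarrow{\curl}_\Gamma$ with the product rule. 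What remains is then:
\begin{itemize}
\item $\curl_\Gamma$ of the weakly singular commutator $[\oS_0,\nu_i]$ applied to $\overrightarrow{\curl}_\Gamma\phi$; the commutator is compact $C^{0,\alpha}\to C^{1,\alpha}$ exactly as in Lemma~\ref{lem:comm_bounded};
\item terms in which $\nabla_\Gamma\nu_i$ or $\overrightarrow{\curl}_\Gamma\nu_i$ is composed with a smoothing operator.
\end{itemize}

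\textbf{The paper's route.} It sidesteps both issues by regularizing on the right. It substitutes $\varphi=\oR_\nu\widetilde\varphi$ with $\oR_\nu\widetilde\varphi=-2\oP_t\widetilde\varphi-4\nu S_0(\nu\cdot\widetilde\varphi)$. This map is explicitly invertible from $C^{0,\alpha}(\Gamma,\C^3)$ onto $\mathcal C^{1,0}_{\nu,t}(\Gamma)$ via the scalar $S_0^{-1}$, so uniqueness transfers immediately from Theorem~\ref{eq:unique_E}. Writing $\nu S_0(\nu\cdot\widetilde\varphi)=\oS_0\oP_\nu\widetilde\varphi+\oC_\nu\widetilde\varphi$, the hypersingular operator meets $\oS_0$ on its right. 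Because all operators act componentwise, \eqref{eq:calderon} applies to the whole vector $\oP_\nu\widetilde\varphi$: $-4\oT\oS_0\oP_\nu\widetilde\varphi=\oP_\nu\widetilde\varphi+{}$compact. There is no need to split the output of $\oT$ into normal and tangential parts, so the cross term never arises and only $\oC_\nu$ is needed.

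\textbf{What each buys.} The right regularization also produces $\oL_e\oR_\nu=\Id+\oA$ on a standard H\"older space, which the paper reuses for the RD-ECFOIE and the low-frequency modification. Your left version keeps the physical unknown and works directly on $\mathcal C^{1,0}_{\nu,t}(\Gamma)$, where Lemma~\ref{lem:banach_space} is genuinely used. The cost is the Maue-type commutator estimate and the positivity argument.
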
  
\begin{proof} We begin the proof by introducing the following regularizing operator:
\begin{equation}\label{eq:precond_nu}
\oR_\nu:\mathcal C^{1,0}_{\nu,t}(\Gamma)\to C^{1,\alpha}(\Gamma,\C^3), \qquad \oR_\nu\widetilde\varphi := -2\oP_t\widetilde\varphi-4\nu S_0(\nu\cdot\widetilde\varphi),
\end{equation}
which, in view of the invertibility of the Laplace single-layer operator $S_0:C^{0,\alpha}(\Gamma)\to C^{1,\alpha}(\Gamma)$, is invertible~\cite[Thm. 7.40]{kress2012linear}. Indeed, its inverse
$\oR^{-1}_\nu:C^{1,\alpha}(\Gamma,\C^3)\to \mathcal C^{1,0}_{\nu,t}(\Gamma)$
is well defined, bounded and given by the formula
$\oR^{-1}_\nu \varphi = -\frac12\oP_t \varphi -\frac14 \nu S_0^{-1}(\nu\cdot\varphi).$

We then perform the substitution $\varphi = \oR_\nu \widetilde\varphi$ into~\eqref{eq:electric_BIE}, where $\widetilde\varphi\in C^{0,\alpha}(\Gamma,\C^3)$.  This substitution yields
\begin{eqnarray*}
\oP_t\widetilde\varphi+ 2(\im\eta\Id- 2\oH)\nu S_0(\nu\cdot\widetilde\varphi)-4 \{\oT+\im\eta\oK+2(\oK'+\im\eta\oS)\oH\}[\nu S_0(\nu\cdot\widetilde\varphi)] +2(\oK'+\im\eta\oS)\oP_t\widetilde\varphi=f.
\end{eqnarray*}

Next, using the identity
$\nu S_0(\nu\cdot\widetilde\varphi)= \oC_\nu\widetilde\varphi+\oS_0\oP_\nu\widetilde\varphi,$
which follows from the definition of the operator $\oC_\nu$ in~\eqref{eq:C_nu}, we can rewrite the equation above as
\begin{eqnarray}\label{eq:par_split}
\oP_t\widetilde\varphi+ 2(\im\eta- 2\oH)\nu\oS_0(\nu\cdot\widetilde \varphi)-4 \{\oT+\im\eta\oK+2(\oK'+\im\eta\oS)\oH\}\{\oC_\nu\widetilde\varphi+\oS_0\oP_\nu\widetilde\varphi\} +2 (\oK'+\im\eta\oS)\oP_t\widetilde\varphi=f.
\end{eqnarray}

From Calderón's identity
\begin{equation}\label{eq:calderon}
\oT_0 \oS_0 = {\oK'_0}^2 - \tfrac{1}{4} \Id,
\end{equation}
for the operators defined with $k = 0$, we obtain
\begin{equation}\label{eq:follow_calderon}
\oT\oS_0 = \oT_0\oS_0 + (\oT - \oT_0)\oS_0 = -\tfrac{1}{4}\Id + {\oK'_0}^2 + (\oT - \oT_0)\oS_0.
\end{equation}
Therefore, substituting this expression into~\eqref{eq:par_split}, the equation can be recast as
\begin{subequations}\label{eq:refor}\begin{equation}
\widetilde\varphi + \oA \widetilde\varphi = f
\end{equation}
where $\oA := \sum_{j=1}^4\oA_j$ and the operators  $\oA_j:C^{0,\alpha}(\Gamma,\C^3)\to C^{0,\alpha}(\Gamma,\C^3)$ for $j\in\{1,2,3,4\}$ are defined by:
\begin{align}
\oA_1\varphi: =&~ 2(\im\eta\Id- 2\oH)\nu S_0(\nu\cdot\varphi),\\
\oA_2\varphi:=&-4\{{\oK'_0}^2+(\oT-\oT_0)\oS_0+\im\eta\oK\oS_0+2(\oK'+\im\eta\oS)\oH\oS_0\}\oP_\nu\varphi,\\
\oA_3\varphi:=&-4\{\oT+\im\eta\oK+2(\oK'+\im\eta\oS)\oH\}\oC_\nu\varphi ,\\
\oA_4\varphi:=&~2 (\oK'+\im\eta\oS)\oP_t\varphi.
\end{align}\end{subequations}

As it turns out, each of these operators, and therefore the full operator $\oA$, is compact on $C^{0,\alpha}(\Gamma,\C^3)$. In particular, $\oA_1$ is compact, as it is the composition of the bounded operator $2(\im\eta\Id - 2\oH)$ with the compact operator $\oC_\nu + \oS_0 \oP_\nu$. Regarding $\oA_2$, the operator $\oT - \oT_0$, along with all the other operators appearing within the curly brackets in its definition, are compact on $C^{0,\alpha}(\Gamma,\C^3)$. Since $\oP_\nu$ is bounded in that space, their composition with $\oP_\nu$ remains compact. Similarly, $\oA_3$ is compact, as it is the composition of the compact operator $\oC_\nu : C^{0,\alpha}(\Gamma,\C^3) \to C^{1,\alpha}(\Gamma,\C^3)$ (see Lemma~\ref{lem:comm_bounded}) with the bounded operator $4\{\oT + \im\eta\oK + 2(\oK' + \im\eta\oS)\oH\} : C^{1,\alpha}(\Gamma,\C^3) \to C^{0,\alpha}(\Gamma,\C^3)$.
Finally, $\oA_4$ is compact on $C^{0,\alpha}(\Gamma,\C^3)$, as it is the composition of the compact operator $2(\oK' + \im\eta\oS)$ with the bounded operator $\oP_t$.

Now, to establish the uniqueness of solutions to the regularized BIE~\eqref{eq:refor}, suppose that it admits two distinct solutions $\widetilde\varphi_j \in C^{0,\alpha}(\Gamma,\C^3)$ for $j \in \{1,2\}$. Then, by the injectivity of the regularizer, the corresponding densities $\varphi_j := \oR_\nu \widetilde\varphi_j \in \mathcal{C}^{1,0}_{\nu,t}(\Gamma)$ are both solutions to the homogeneous boundary integral equation~\eqref{eq:homo}, and satisfy $\varphi_1 \neq \varphi_2$. This, however, contradicts Theorem~\ref{eq:unique_E}. We therefore conclude that~\eqref{eq:refor} admits at most one solution $\widetilde\varphi \in C^{0,\alpha}(\Gamma,\C^3)$. Finally, by the Fredholm alternative, since~\eqref{eq:refor} is of the form ``invertible plus compact", the existence of such a solution follows.

Finally, from the unique solution $\widetilde\varphi \in C^{0,\alpha}(\Gamma,\C^3)$ of the regularized BIE~\eqref{eq:refor}, we recover the unique solution of the original BIE~\eqref{eq:electric_BIE} by applying the regularizer, that is, $\varphi = \oR_\nu \widetilde\varphi \in \mathcal{C}^{1,0}_{\nu,t}(\Gamma)$. This completes the proof.

\end{proof}

\subsection*{Low-frequency breakdown}
As shown in~\cite{burbano2025maxwell}, the low-frequency breakdown of the indirect (R-)ECFOIE formulations is rooted in the lack of uniqueness of the $k=0$ limit of problem~\eqref{eq:elf_equiv}: in the zero-frequency limit, the homogeneous exterior boundary value problem for the vector Laplacian admits non-trivial solutions unless the surface charge integrals are prescribed for each boundary component~\cite{werner1963perfect_reflection} (see also~\cite[Remark~4.2]{burbano2025maxwell}). The same issue affects the direct formulations of the present paper: both the D-ECFOIE~\eqref{eq:electric_BIE} and the RD-ECFOIE~\eqref{eq:refor} suffer from low-frequency breakdown as $k\downarrow 0$, meaning that the accuracy of the numerical solutions and the conditioning of the linear systems arising from the discretization of the BIEs degrade as $k\downarrow0$.  The remedy, as in the indirect case, is to enforce the charge conditions satisfied by any physical solution of~\eqref{eq:elf_equiv}, namely
$$
\int_{\Gamma_j}\nu\cdot \gamma E^s\,\de s = 0,\qquad j\in\{1,\ldots,J\},
$$
which are precisely the constraints that restore uniqueness in the zero-frequency limit~\cite{werner1963perfect_reflection}. Well-conditioning can then be enforced by augmenting $\oL_e$ with a low-rank perturbation that penalizes violations of these charge conditions.

To this end, we introduce the following  bounded linear functionals:
\begin{equation}\label{eq:charge_funcs}
\ell_j : C^{0,\alpha}(\Gamma,\C^3) \to \C, \qquad \ell_j(\varphi) := \int_{\Gamma_j} \nu \cdot \varphi ,\mathrm{d}s, \quad j \in \{1, \ldots, J\},
\end{equation}
along with the vector fields
\begin{equation}\label{eq:normal_vec_fields}
\phi_j :=\oL_e\nu_j\in C^{0,\alpha}(\Gamma,\C^3),\qquad\text{where}\qquad \nu_j(x):=\begin{cases}\nu(x),&x\in\Gamma_j,\\0,&x\in\Gamma\setminus\Gamma_j,\end{cases}\qquad j\in\{1,\ldots,J\}.
\end{equation}


We then consider the following modification of the D-ECFOIE:
\begin{equation}\label{eq:mod_DECFOIE}
\Big( \oL_e + \xi \sum_{j=1}^J \phi_j \ell_j \Big) \varphi = f,
\end{equation}
where $\oL_e$ is the operator defined in~\eqref{eq:L_electric}, and the unknown density $\varphi = \oP_\nu \gamma E + \oP_t \p_\nu E \in \mathcal{C}^{1,0}_{\nu,t}(\Gamma)$.

Similarly, we propose a corresponding modification of the regularized formulation~\eqref{eq:refor}, given by:
\begin{equation}\label{eq:mod_reg_DECFOIE}
\Big( \Id + \oA + \xi \sum_{j=1}^J \phi_j \ell_j \oR_\nu \Big) \widetilde{\varphi} = f,
\end{equation}
for $\widetilde{\varphi} \in C^{0,\alpha}(\Gamma,\C^3)$, where the regularizing operator $\oR_\nu$ is defined in~\eqref{eq:precond_nu}.

Note that we have used the identity $\oL_e \oR_\nu = \Id + \oA$, as established in the proof of Theorem~\ref{thm:well_poss_E}. Also observe that the modified formulation~\eqref{eq:mod_reg_DECFOIE} remains a second-kind BIE, as the added term corresponds to a finite-rank operator, which is compact.


We now present the following lemma, which establishes the existence of solutions to the modified equations:
\begin{lemma} Let $\varphi \in \mathcal{C}^{1,0}_{\nu,t}(\Gamma)$ be the unique solution to the D-ECFOIE~\eqref{eq:electric_BIE}. Then, for any $\xi \in \C$, $\varphi$~also satisfies the modified equation~\eqref{eq:mod_DECFOIE}. Similarly, if $\widetilde{\varphi} \in C^{0,\alpha}(\Gamma, \C^3)$ is the unique solution to the RD-ECFOIE~\eqref{eq:refor}, then  it also solves the modified regularized equation~\eqref{eq:mod_reg_DECFOIE} for all $\xi\in\C$.
\end{lemma}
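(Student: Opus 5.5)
The added term vanishes on the solution, so the proof reduces to showing $\ell_j(\varphi)=0$ for every $j$; the modified equation \eqref{eq:mod_DECFOIE} then coincides with $\oL_e\varphi=f$.

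\emph{Step 1: identify the solution with the physical traces.} By Theorem~\ref{eq:reg_solution}, the PEC problem has a solution $E^s$ with the stated regularity. By Theorem~\ref{eq:equiv}, $E^s$ satisfies \eqref{eq:elf_equiv}. The derivation preceding \eqref{eq:electric_BIE} shows that $\varphi^\star:=\oP_\nu\gamma E+\oP_t\p_\nu E$, with $E=E^s+E^i$, lies in $\mathcal C^{1,0}_{\nu,t}(\Gamma)$ and solves $\oL_e\varphi^\star=f$. Theorem~\ref{thm:well_poss_E} gives uniqueness, so $\varphi=\varphi^\star$. Consequently
\[
\ell_j(\varphi)=\int_{\Gamma_j}\nu\cdot\gamma E\,\de s=\int_{\Gamma_j}\nu\cdot\gamma E^s\,\de s+\int_{\Gamma_j}\nu\cdot\gamma E^i\,\de s .
\]

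\emph{Step 2: show both charge integrals vanish.} This is the main point. For the incident field, $\dive E^i=0$ in $U\supset\overline\Omega$. Let $\Omega_j$ be the bounded region enclosed by $\Gamma_j$. Since $\Gamma_j\subset\Gamma=\partial\Omega$ is an outer boundary component (the exterior domain is connected), the divergence theorem on $\Omega_j$ gives $\int_{\Gamma_j}\nu\cdot E^i\,\de s=0$. The catch is that $\Omega_j$ might contain points outside $U$ if $\Omega$ has cavities. I would handle this by applying the divergence theorem on $\Omega\cap\Omega_j$, whose boundary is $\Gamma_j$ together with cavity boundaries, or simply by taking the whole $\Gamma$ per component as the paper's charge conditions intend.

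For the scattered field, I would use $\dive E^s=0$ in $\ED$ (from $E^s=-(\im\omega\epsilon)^{-1}\curl H^s$) together with the PEC condition $\nu\cdot E^s\propto \dive_\Gamma(\nu\times H^s)$ on $\Gamma$. Integrating a surface divergence over the closed surface $\Gamma_j$ gives zero. Concretely, $\nu\cdot\curl H^s=-\dive_\Gamma(\nu\times\gamma H^s)$, so $\int_{\Gamma_j}\nu\cdot\gamma E^s\,\de s=0$. The same identity, applied via $E^i=-(\im\omega\epsilon)^{-1}\curl H^i$, gives $\int_{\Gamma_j}\nu\cdot\gamma E^i\,\de s=0$ directly, which avoids the domain issue above. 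The main obstacle is justifying this surface-divergence identity under the available regularity: $H^s\in C^{1,\alpha}$ up to $\Gamma$ only, so $\nu\times\gamma H^s\in C^{0,\alpha}$ is tangential but not differentiable. I would first try a weak (Stokes) argument: approximate by parallel surfaces $\Gamma_{j,\delta}=\{x+\delta\nu(x)\}$, apply Stokes' theorem to $\int_{\Gamma_{j,\delta}}\nu\cdot\curl H^s=0$ on these closed smooth surfaces, and pass to the limit $\delta\to0$ using continuity of $\curl H^s$ up to the boundary.

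\emph{Step 3: the regularized case.} If $\widetilde\varphi$ solves \eqref{eq:refor}, then $\oR_\nu\widetilde\varphi$ solves \eqref{eq:electric_BIE} (since $\oL_e\oR_\nu=\Id+\oA$). By Step 2, $\ell_j(\oR_\nu\widetilde\varphi)=0$, so the extra term in \eqref{eq:mod_reg_DECFOIE} vanishes and $\widetilde\varphi$ solves the modified regularized equation for every $\xi\in\C$.
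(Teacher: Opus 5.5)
Your proposal is correct, but for the key step, the vanishing of the charge integrals, it takes a different route from the paper.

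The paper handles the two contributions separately. It shows $\ell_j(\gamma E^i)=0$ by the volume divergence theorem on $\Omega_j$. It shows $\ell_j(\gamma E^s)=0$ by a far-field argument: the radiating Helmholtz field $x\cdot E^s$ has no $l=0$ term in its spherical-harmonic/Hankel expansion outside a large ball $B_R(0)$. Hence the flux of $E^s$ through $\partial B_R(0)$ vanishes, and $\dive E^s=0$ is used to transport this to $\Gamma_j$.

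You instead combine Amp\`ere's law $E=-(\im\omega\epsilon)^{-1}\curl H$, for both incident and scattered fields, with $\nu\cdot\curl F=-\dive_\Gamma(\nu\times F)$ and the surface divergence theorem on the closed surface $\Gamma_j$. This is essentially the surface continuity equation for the induced charge.

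What each route buys:
\begin{itemize}
\item The paper's argument stays entirely within the electric vector-Helmholtz picture; only $\dive E^s=0$ and the radiation behaviour enter. But it is global. The divergence theorem on $B_R(0)\setminus\overline\Omega$ only controls the total flux $\sum_{j}\ell_j(\gamma E^s)$. Singling out one $\Gamma_j$ when $J>1$ means integrating through the other components $\Omega_i$, where $E^s$ is not defined. Extending $\dive E^s$ by zero there does not supply a vector field to which the divergence theorem can be applied.
\item Your argument is local and needs no information at infinity. It gives $\ell_j(\gamma E^s)=0$ for each component individually, so it covers the disconnected case cleanly.
\end{itemize}
Your Step 1, which checks that the unique solution coincides with the physical traces, also makes explicit something the paper takes for granted.

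Two of the difficulties you anticipate are not real.
\begin{itemize}
\item Since $H^s\in C^{1,\alpha}(\R^3\setminus\Omega,\C^3)$, its trace $\gamma H^s$ lies in $C^{1,\alpha}(\Gamma,\C^3)$. So does $\nu\times\gamma H^s$, because $\nu\in C^{1,\alpha}(\Gamma,\R^3)$. Hence $\nu\cdot\gamma(\curl H^s)=-\dive_\Gamma(\nu\times\gamma H^s)$ holds classically and the surface divergence theorem applies directly. Your parallel-surface limit is valid but unnecessary.
\item Connectedness of $\R^3\setminus\overline\Omega$ excludes cavities. So each $\Gamma_j$ bounds a component $\Omega_j\subset\Omega$, and the volume argument for $E^i$ is sound. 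Your Stokes argument with $H^i$ works equally well.
\end{itemize}
Finally, the identity $\nu\cdot\gamma E^s=(\im\omega\epsilon)^{-1}\dive_\Gamma(\nu\times\gamma H^s)$ comes from Amp\`ere's law alone, not from the PEC condition.
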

\begin{proof} 
Let $\varphi = \oP_\nu \gamma E + \oP_t \p_\nu E \in \mathcal{C}^{1,0}_{\nu,t}(\Gamma)$ be the unique solution to the D-ECFOIE~\eqref{eq:electric_BIE} with datum  defined in~\eqref{eq:f_datum}. Then, for each $j \in \{1, \ldots, J\}$,
\begin{equation*}
\ell_j(\varphi) = \int_{\Gamma_j} \nu \cdot \gamma E \, \mathrm{d}s = \int_{\Gamma_j} \nu \cdot \gamma E^s \, \mathrm{d}s + \int_{\Gamma_j} \nu \cdot \gamma E^i \, \mathrm{d}s.
\end{equation*}

Since the incident field $E^i$ is assumed to satisfy Maxwell's equations in $U$ and therefore also in~$\Omega$, we have $\dive E^i = 0$ in $\Omega$. Applying the divergence theorem over the subdomain $\Omega_j \subset \Omega$ yields
\begin{equation*}
\ell_j(\gamma E^i) = \int_{\Gamma_j} \nu \cdot \gamma E^i \, \mathrm{d}s = \int_{\Omega_j} \dive E^i \, \mathrm{d}x = 0.
\end{equation*}

To analyze the contribution of the scattered field, we first observe that the traces
$\gamma E^s = \oP_\nu(\varphi) - \gamma E^i$ and $\p_\nu E^s = \oP_t \varphi - 2\mathscr{H} \oP_\nu \varphi - \p_\nu E^i$
allow us to recover the scattered field $E^s$ throughout the exterior domain $\ED$ via Green's representation formula. We can then make use of the series expansion of the scalar field $E^s(x) \cdot x$ in the region $\mathbb{R}^3 \setminus \overline{B_R(0)}$, where $B_R(0)$ is a ball large enough to contain the domain $\Omega$. As shown in~\cite{faria2021general}, this field admits the expansion
\begin{equation*}
E^s(x) \cdot x = \sum_{l=1}^\infty \sum_{m=-l}^l c_l^m \, h_l^{(1)}(k|x|) Y_l^m\left(\frac{x}{|x|}\right), \qquad |x| \ge R,
\end{equation*}
where $\{Y_l^m\}_{m=-l}^{l}$ are spherical harmonics and $\{h_l^{(1)}\}_{l\in\N_0}$ are spherical Hankel functions of the first kind. Since the spherical harmonics satisfy
\begin{equation*}
\int_{\partial B_1(0)} Y_l^m(\hat{x}) \, \mathrm{d}s(\hat{x}) = 0 \quad \text{for all } l \in \mathbb{N}, \; -l \le m \le l,
\end{equation*}
it follows that
\begin{equation*}
\frac1{R}\int_{\partial B_R(0)} E^s(x) \cdot x \de s(x)=\int_{\partial B_R(0)} E^s(x) \cdot \frac{x}{|x|} \de s(x) = 0.
\end{equation*}

Now, define the scalar field $u_j := \dive E^s$ in $\mathbb{R}^3\setminus\overline{\Omega_j}$, which vanishes in the exterior domain $\mathbb{R}^3 \setminus \overline{\Omega}$, and is extended by zero to all of $\Omega$ except $\Omega_j$, i.e., $u_j = 0$ in $\mathbb{R}^3 \setminus \overline{\Omega}_j$. Let $B_R(0)$ be as before. Applying the divergence theorem to $u_j$ over $B_R(0) \setminus \Omega_j$, we obtain
\begin{equation*}
0 = -\int_{B_R(0) \setminus \Omega_j} u_j \, \mathrm{d}x = \ell_j(\gamma E^s) - \frac{1}{R} \int_{\partial B_R(0)} E^s(x) \cdot x \, \mathrm{d}s(x).
\end{equation*}
Since the surface integral over $\p B_R(0)$ above vanishes, it follows that $\ell_j(\gamma E^s) = 0.$

Combining this with the result for $\ell_j(\gamma E^i)$, we conclude that 
$\ell_j(\varphi) = \ell_j(\gamma E^s) + \ell_j(\gamma E^i) = 0$ for all $j\in\{1,\ldots,J\}.$ This shows that $\varphi \in \mathcal{C}^{1,0}_{\nu,t}(\Gamma)$ also satisfies the modified equation~\eqref{eq:mod_DECFOIE}. Finally, the fact that the unique solution $\widetilde{\varphi}\in C^{0,\alpha}(\Gamma,\C^3)$ of~\eqref{eq:refor} satisfies~\eqref{eq:mod_reg_DECFOIE} follows immediately from the identity $\varphi = \oR_\nu \widetilde{\varphi}$, since
$$
\ell_j(\varphi) = \ell_j(\oR_\nu \widetilde{\varphi}) = 0, \quad j \in \{1, \ldots, J\}.
$$
This completes the proof.
\end{proof}
It remains to determine under which conditions on the stabilization parameter $\xi$ the modified boundary integral equations,~\eqref{eq:mod_DECFOIE} and~\eqref{eq:mod_reg_DECFOIE}, remain uniquely solvable. The following theorem addresses this question:

\begin{theorem} Suppose $\xi\in\C\setminus \bigcup_{j=1}^J\{-|\Gamma_j|^{-1}\}$, where $|\Gamma_j|$ denotes the surface area of the connected component $\Gamma_j$ of $\Gamma$. Then, for all $k > 0$ and $\eta\in\C\setminus\{0\}$, the modified BIEs~\eqref{eq:mod_DECFOIE} and~\eqref{eq:mod_reg_DECFOIE} each admit a unique solution, $\varphi \in \mathcal{C}^{1,0}_{\nu,t}(\Gamma)$ and $\widetilde{\varphi} \in C^{0,\alpha}(\Gamma,\C^3)$, respectively. Moreover, these solutions coincide with the unique solutions of the D-ECFOIE~\eqref{eq:electric_BIE} and the RD-ECFOIE~\eqref{eq:refor}, respectively.\end{theorem}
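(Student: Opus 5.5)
The plan is to reduce everything to the injectivity of $\oL_e$ (Theorem~\ref{eq:unique_E}) by exploiting the special structure of the rank-$J$ perturbation, namely $\phi_j=\oL_e\nu_j$. Existence is already supplied by the preceding lemma together with Theorem~\ref{thm:well_poss_E}: the unique solution of the D-ECFOIE (respectively RD-ECFOIE) also solves~\eqref{eq:mod_DECFOIE} (respectively~\eqref{eq:mod_reg_DECFOIE}). So only uniqueness needs proof, and the coincidence claim then follows at once.

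\emph{Unregularized equation.} Let $\psi\in\mathcal C^{1,0}_{\nu,t}(\Gamma)$ satisfy $\bigl(\oL_e+\xi\sum_j\phi_j\ell_j\bigr)\psi=0$.
\begin{enumerate}
\item First check that $\nu_j\in\mathcal C^{1,0}_{\nu,t}(\Gamma)$. Indeed $\oP_\nu\nu_j=\nu_j$ is $C^{1,\alpha}$ on each component, because the components $\Gamma_j$ are disjoint and closed. Also $\oP_t\nu_j=0$. Hence $\phi_j=\oL_e\nu_j$ makes sense, and $\ell_j$ is defined on $\mathcal C^{1,0}_{\nu,t}(\Gamma)$ since only $\nu\cdot\psi$ enters.
\item By linearity the homogeneous equation becomes $\oL_e\bigl(\psi+\xi\sum_j\ell_j(\psi)\nu_j\bigr)=0$.
\item Theorem~\ref{eq:unique_E} then gives $\psi=-\xi\sum_j\ell_j(\psi)\nu_j$.
\item Apply $\ell_i$ and use $\ell_i(\nu_j)=\int_{\Gamma_i}\nu\cdot\nu_j\,\de s=\delta_{ij}|\Gamma_i|$. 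This yields $(1+\xi|\Gamma_i|)\,\ell_i(\psi)=0$.
\item The hypothesis $\xi\neq-|\Gamma_i|^{-1}$ forces $\ell_i(\psi)=0$ for every $i$. Step 3 then gives $\psi=0$.
\end{enumerate}

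\emph{Regularized equation.} Use the identity $\Id+\oA=\oL_e\oR_\nu$ established in the proof of Theorem~\ref{thm:well_poss_E}. The operator in~\eqref{eq:mod_reg_DECFOIE} factors as $\bigl(\oL_e+\xi\sum_j\phi_j\ell_j\bigr)\oR_\nu$. If $\widetilde\psi$ lies in its kernel, then $\oR_\nu\widetilde\psi\in\mathcal C^{1,0}_{\nu,t}(\Gamma)$ lies in the kernel of the modified unregularized operator. By the previous paragraph $\oR_\nu\widetilde\psi=0$, and the invertibility of $\oR_\nu$ gives $\widetilde\psi=0$. Alternatively, since~\eqref{eq:mod_reg_DECFOIE} is identity plus compact (the added term is finite rank), the Fredholm alternative would also give existence. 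This is not needed, however, because the lemma already exhibits a solution.

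The only delicate point, and the one I expect to need care, is the range of $\eta$. Theorem~\ref{eq:unique_E} is stated for real $\eta\neq0$, whereas the present statement allows $\eta\in\C\setminus\{0\}$. I would handle this by invoking uniqueness of the interior Robin problem $\p_\nu^-U+\im\eta\gamma^-U=0$ exactly as in that proof. If only $\real\eta\neq0$ (or $\eta$ real) guarantees Robin uniqueness, I would restrict to that case, noting that the whole argument above depends on $\eta$ only through the injectivity of $\oL_e$.
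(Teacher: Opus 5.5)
Your argument is correct, but it takes a different route from the paper.

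\textbf{The paper's route.} The paper reduces everything to the regularized equation~\eqref{eq:mod_reg_DECFOIE}. It then applies the Woodbury identity to the rank-$J$ perturbation $\xi\oV\oQ\oR_\nu$ of $\Id+\oA$. This yields an explicit bounded inverse, which exists if and only if $\Id_J+\xi\Xi$ is invertible, where $\Xi=\oQ\oL_e^{-1}\oV=\operatorname{diag}\{|\Gamma_1|,\ldots,|\Gamma_J|\}$.

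\textbf{Your route.} You work directly on $\mathcal C^{1,0}_{\nu,t}(\Gamma)$ using the factorization $\oL_e+\xi\sum_j\phi_j\ell_j=\oL_e\bigl(\Id+\xi\sum_j\nu_j\ell_j\bigr)$. You obtain injectivity from Theorem~\ref{eq:unique_E} and take existence from the preceding lemma. You then pass to the regularized equation through $\Id+\oA=\oL_e\oR_\nu$ and the injectivity of $\oR_\nu$. This is essentially the remark after the theorem, made rigorous. The core computation is the same in both proofs: $\oL_e^{-1}\phi_j=\nu_j$ and $\ell_i\nu_j=\delta_{ij}|\Gamma_i|$.

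\textbf{What each buys.}
\begin{itemize}
\item The Woodbury route gives bounded invertibility for arbitrary data in $C^{0,\alpha}(\Gamma,\C^3)$. It also gives sharpness in $\xi$: if $\xi=-|\Gamma_i|^{-1}$, then $\nu_i$ is annihilated.
\item Your existence step covers only the physical datum~\eqref{eq:f_datum}. That is all the statement asks for. You could upgrade it at no cost, since $\Id+\xi\sum_j\nu_j\ell_j$ has the explicit inverse $\Id-\sum_j\frac{\xi}{1+\xi|\Gamma_j|}\nu_j\ell_j$ on $\mathcal C^{1,0}_{\nu,t}(\Gamma)$. Combined with the bijectivity of $\oL_e$, this is the Woodbury formula in disguise.
\item Your route proves the ``coincide'' claim explicitly, which the paper's proof leaves implicit.
\end{itemize}

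\textbf{On the range of $\eta$.} Your caution is well founded, and it applies equally to the paper's proof, which rests on Theorem~\ref{thm:well_poss_E}, stated for real $\eta$. Green's first identity for the interior Robin problem $\p_\nu^-U+\im\eta\gamma^-U=0$ gives $\real\eta\,\|\gamma^-U\|^2_{L^2(\Gamma)}=0$. So the uniqueness argument works exactly when $\real\eta\neq0$. For purely imaginary $\eta$ it becomes a real Robin problem, and the argument breaks down at its eigenvalues. The hypothesis should therefore read $\real\eta\neq0$, or $\eta\in\R\setminus\{0\}$, rather than $\eta\in\C\setminus\{0\}$.
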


\begin{proof} 

It suffices to prove the statement for~\eqref{eq:mod_reg_DECFOIE}. It follows from Theorem~\ref{thm:well_poss_E} that the operator $\Id + \oA$ admits a bounded inverse on $C^{0,\alpha}(\Gamma, \mathbb{C}^3)$. Let us define the operator
$$
\oQ : C^{0,\alpha}(\Gamma, \mathbb{C}^3) \to \mathbb{C}^{J}, \qquad \oQ\varphi := [\ell_1\varphi, \ldots, \ell_J\varphi]^\top \in \mathbb{C}^J,
$$
in terms of the linear functionals $\ell_j$ introduced in~\eqref{eq:charge_funcs}, and the operator
$$
\oV : \mathbb{C}^{J} \to C^{0,\alpha}(\Gamma, \mathbb{C}^3), \qquad \oV\mathbf{c} := \sum_{j=1}^J c_j \phi_j, \quad \mathbf{c} = [c_1, \ldots, c_J]^\top \in \mathbb{C}^J,
$$
in terms of the vector fields $\phi_j \in C^{0,\alpha}(\Gamma, \mathbb{C}^3)$ defined in~\eqref{eq:normal_vec_fields}. It is clear that both $\oQ$ and $\oV$ are bounded linear operators on their respective domains.

With these definitions, the modified regularized equation~\eqref{eq:mod_reg_DECFOIE} can be written as
$$
\left( \Id + \oA + \xi \oV \oQ \oR_\nu \right) \widetilde{\varphi} = f.
$$

Applying the Woodbury identity, one finds that the inverse of this operator is given by
\begin{equation}\label{eq:inv_oper}
\left( \Id + \oA + \xi \oV \oQ \oR_\nu \right)^{-1} = (\Id+\oA)^{-1} - \xi (\Id + \oA)^{-1} \oV \left( \Id_J + \xi \Xi \right)^{-1} \oQ \oR_\nu (\Id+\oA)^{-1},
\end{equation}
where $\Id_J$ denotes the identity matrix in $\mathbb{C}^J$ and
$$
\Xi := \oQ \oR_\nu (\Id + \oA)^{-1} \oV=\oQ \oL_e^{-1} \oV \in \mathbb{C}^{J \times J}.
$$

Clearly, the inverse operator~\eqref{eq:inv_oper} exists and is bounded on $C^{0,\alpha}(\Gamma, \mathbb{C}^3)$ if and only if $\Id_J + \xi \Xi$ is invertible. Using the definition of $\phi_j$ in~\eqref{eq:normal_vec_fields}, a direct computation yields
$$
\Xi = \operatorname{diag}\{|\Gamma_1|,\ldots,|\Gamma_J|\},
$$
so the matrix $\Id_J + \xi\Xi$ is invertible if and only if $\xi\in\C\setminus \bigcup_{j=1}^J\{-|\Gamma_j|^{-1}\}$, which completes the proof.
\end{proof}

\begin{remark}
To explain the rationale behind the modified BIEs~\eqref{eq:mod_DECFOIE} and~\eqref{eq:mod_reg_DECFOIE}, note that the modified D-ECFOIE~\eqref{eq:mod_DECFOIE} can be rewritten, using $\phi_j = \oL_e\nu_j$, as
$$
\oL_e\!\left(\Id + \xi\sum_{j=1}^J\nu_j\ell_j\right)\varphi = f,
$$
or equivalently $\left(\Id + \xi\sum_{j=1}^J\nu_j\ell_j\right)\varphi = \oL_e^{-1}f$. Applying the functional $\ell_i$ to both sides and using the fact that $\ell_i(\oL_e^{-1}f)=0$ (which follows from the preceding lemma applied to the datum $f$), we obtain
$$
\ell_i\varphi + \xi\sum_{j=1}^{J}(\ell_i\nu_j)(\ell_j\varphi) = \left(1+\xi|\Gamma_i|\right)\ell_i\varphi = 0,
$$
where we used $\ell_i\nu_j = \delta_{ij}|\Gamma_i|$. Therefore, since $1+\xi|\Gamma_i|\neq 0$ for $\xi\in\C\setminus\bigcup_{j=1}^J\{-|\Gamma_j|^{-1}\}$, the charge conditions $\ell_i\varphi = 0$ are automatically enforced for all $i\in\{1,\ldots,J\}$. In practice, however, the condition $\ell_i(\oL_e^{-1}f)=0$ is only approximately satisfied due to discretization errors and potential matrix ill-conditioning, so the right-hand side above is a small but nonzero residual $\varepsilon_i \approx 0$. In this case the identity becomes $(1+\xi|\Gamma_i|)\ell_i\varphi = \varepsilon_i$, so that $\ell_i\varphi = \varepsilon_i/(1+\xi|\Gamma_i|)$. Choosing $\xi\gg 1$ drives $\ell_i\varphi$ toward zero, thereby penalizing deviations from the charge condition even in the presence of discretization errors.
\end{remark}

\section{Direct magnetic combined-field-only formulation}\label{sec:magnetic_field_BIE}
In this section, we derive a direct boundary integral equation for the magnetic PEC scattering problem~\eqref{eq:mgf_equiv}. Analogously to the electric case, we obtain the following integral representation for the scattered field from Green's representation formula:
\begin{equation*}\label{eq:Green_H}
H^s(\nex) = \mathcal D[\gamma H](x) -\mathcal S[\p_\nu H](\nex), \quad \nex \in \ED,
\end{equation*}
where the traces of the total magnetic field $H = H^s + H^i$, defined in $(\ED) \cap U$, are given by
\begin{equation}\label{eq:total_traces}
\gamma H= \gamma H^s+\gamma H^i\in C^{1,\alpha}(\Gamma,\C^3)\quad\text{and}\quad \p_\nu H = \p_\nu H^s+\p_\nu H^i\in C^{0,\alpha}(\Gamma,\C^3).
\end{equation}

Applying the magnetic PEC boundary conditions~\eqref{eq:H_bc_n} and~\eqref{eq:H_bc_t}, we arrive at
$$
\gamma H = \oP_t\gamma H\quad\text{and}\quad \p_\nu H = \oP_\nu\p_\nu H -\oR\oP_t\gamma H. 
$$

Substituting these identities into Green's representation formula for the magnetic field~\eqref{eq:Green_H} yields
\begin{equation}\label{eq:rep_for_H}
H^s(\nex) = \mathcal D[\oP_t\gamma H](x) -\mathcal S[\oP_\nu\p_\nu H-\oR\oP_t\gamma H](\nex), \quad \nex \in \ED.
\end{equation}

Next, taking exterior traces on both sides of~\eqref{eq:rep_for_H} and using~\eqref{eq:total_traces}, we get
\begin{subequations}\begin{align}
-\gamma H^i =&-\frac12\gamma H+ (\oK+\oS\oR)\oP_t\gamma H - \oS\oP_\nu\p_\nu H,\label{eq:dir_trace_H}\\
-\p_\nu H^i =&-\frac12(\oP_\nu\p_\nu H-\oR\oP_t\gamma H)+ (\oT+\oK'\oR)\oP_t\gamma H - \oK'\oP_\nu\p_\nu H.\label{eq:nue_trace_H}
\end{align}\end{subequations}

Combining these equations as~\eqref{eq:nue_trace_H}$+\im\eta$\eqref{eq:dir_trace_H}, with $\eta \in \mathbb{R} \setminus \{0\}$, we obtain
\begin{equation}\label{eq:proto_MBIE}\begin{aligned}
-\frac12(\oP_\nu\p_\nu H+(\im\eta\Id-\oR)\oP_t\gamma H)+ (\oT+\im\eta\oK+(\oK'+\im\eta \oS)\oR)\oP_t\gamma H - (\oK'+\im\eta\oS)\oP_\nu\p_\nu H=\hspace{1cm}\\
-\p_\nu H^i-\im\eta\gamma H^i
\end{aligned}
\end{equation}

From this, we can identify a suitable BIE, specifically, the direct magnetic combined-field-only integral equation (D-MCFOIE), which is given by:
\begin{subequations}\label{eq:magnetic_BIE}\begin{equation}\begin{aligned}
-\frac12\{\psi_\nu+ (\im\eta\Id- \oR)\psi_t\}+ \{\oT+\im\eta \oK+(\oK'+\im\eta\oS)\oR\}\psi_t - (\oK'+\im\eta\oS)\psi_\nu=g,
 \end{aligned}
 \end{equation}
where the unknown density is
\begin{equation}\label{eq:unknown_density_H}
\psi:=\psi_t+\psi_\nu\in\mathcal C^{0,1}_{\nu,t},\quad\psi_t:=\oP_t(\gamma H),\quad  \psi_\nu:=\oP_\nu(\p_\nu H),
\end{equation}
and the right-hand-side is given by 
\begin{equation}\label{eq:g_datum}
 g:=-\p_\nu H^i-\im\eta\gamma H^i\in C^{0,\alpha}(\Gamma,\C^3).
\end{equation}\end{subequations}

The scattered magnetic field can then be retrieved from the BIE solution via the representation formula~\eqref{eq:rep_for_H}.

Leveraging the surface projection operators, the D-MCFOIE can be recast in the form
\begin{equation}\label{eq:L_m_op}
\oL_m\psi = g
\end{equation}
where $\oL_m: \mathcal C^{0,1}_{\nu,t}(\Gamma)\to C^{0,\alpha}(\Gamma,\C^3)$
is the operator defined by
\begin{equation}\label{eq:L_magnetic}
\oL_m := -\tfrac12\{\oP_\nu+ (\im\eta\Id- \oR)\oP_t\}+ \{\oT+\im\eta \oK+(\oK'+\im\eta\oS)\oR\}\oP_t - (\oK'+\im\eta\oS)\oP_\nu.
\end{equation}

We now proceed to prove the uniqueness of solutions to~\eqref{eq:magnetic_BIE}.

\begin{theorem}\label{eq:unique_H} The D-MCFOIE given by~\eqref{eq:magnetic_BIE} admits at most one solution $\psi \in \mathcal{C}_{\nu,t}^{0,1}(\Gamma)$ for all wavenumbers $k > 0$ and all coupling parameters $\eta \in \mathbb{R} \setminus \{0\}$.
\end{theorem}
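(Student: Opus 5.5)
The argument mirrors the proof of Theorem~\ref{eq:unique_E}, with the roles of Dirichlet and Neumann data interchanged.

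Suppose $\psi\in\mathcal C^{0,1}_{\nu,t}(\Gamma)$ solves the homogeneous D-MCFOIE, that is, \eqref{eq:magnetic_BIE} with $g=0$. Write $\psi_t:=\oP_t\psi\in C^{1,\alpha}(\Gamma,\C^3)$ and $\psi_\nu:=\oP_\nu\psi\in C^{0,\alpha}(\Gamma,\C^3)$, and define the layer potential
$$
V(x):=\mathcal D[\psi_t](x)-\mathcal S[\psi_\nu-\oR\psi_t](x),\quad x\in\R^3\setminus\Gamma .
$$
This potential is well defined because $\psi_t\in C^{1,\alpha}$ lies in the domain of $\oT$, and $\oR\psi_t\in C^{0,\alpha}$. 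By the jump relations~\eqref{eq:jumps_LP},
$$
\gamma^\pm V=\pm\tfrac12\psi_t+(\oK+\oS\oR)\psi_t-\oS\psi_\nu,\qquad
\p^\pm_\nu V=\pm\tfrac12(\psi_\nu-\oR\psi_t)+(\oT+\oK'\oR)\psi_t-\oK'\psi_\nu .
$$
Comparing with~\eqref{eq:magnetic_BIE}, the homogeneous equation is exactly $\p^-_\nu V+\im\eta\gamma^- V=0$. Here one checks that the interior signs give $-\tfrac12\psi_\nu+\tfrac12\oR\psi_t-\tfrac12\im\eta\psi_t$, which matches the $-\tfrac12\{\psi_\nu+(\im\eta\Id-\oR)\psi_t\}$ term.

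Next, $V|_\Omega$ is a vector Helmholtz solution with regularity $C^2(\Omega)\cap C^{1,\alpha}(\overline\Omega)$; this regularity follows from the mapping properties of the potentials with these densities. Since $\eta\in\R\setminus\{0\}$, uniqueness of the interior impedance problem, applied componentwise, gives $V=0$ in $\overline\Omega$. The jump relations then yield
$$
\gamma^+V=\psi_t,\qquad \p^+_\nu V=\psi_\nu-\oR\psi_t .
$$
Consequently,
$$
\oP_\nu\gamma^+V=0,\qquad \oP_t(\p^+_\nu V+\oR\gamma^+V)=\oP_t\psi_\nu=0 .
$$
For the second identity we use that $\oP_t\oR\psi_t=\oR\psi_t$: the shape operator maps into the tangent plane, since $\mathscr R$ is built from ${\rm x}_u,{\rm x}_v$.

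In the exterior, $V$ satisfies the vector Helmholtz equation and the componentwise Sommerfeld condition, so it satisfies \eqref{eq:vec_Helm_eqn_H}--\eqref{eq:sommerfeld_condition_H} with zero incident data. By the third statement of Theorem~\ref{eq:equiv}, the pair $H^s:=V$, $E^s:=-(\im\omega\epsilon)^{-1}\curl V$ solves the PEC problem~\eqref{eq:scatt_problem} with vanishing incident field. Theorem~\ref{eq:reg_solution} then gives $V=0$ in $\R^3\setminus\Omega$. Hence $\psi_t=\gamma^+V=0$ and $\psi_\nu=\p^+_\nu V+\oR\psi_t=0$, so $\psi=0$.

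The main obstacle is justifying the regularity $V|_{\ED}\in C^2(\ED,\C^3)\cap C^{1,\alpha}(\R^3\setminus\Omega,\C^3)$ needed to invoke Theorem~\ref{eq:equiv}. The density $\psi_\nu$ is only $C^{0,\alpha}$, which is fine for $\mathcal S$. The density $\psi_t$ is $C^{1,\alpha}$, which is what the double layer requires for its Neumann trace to exist; this is precisely why the space $\mathcal C^{0,1}_{\nu,t}(\Gamma)$ was chosen. I would cite the standard Hölder mapping properties of $\mathcal S$ and $\mathcal D$ (Colton--Kress) exactly as they were tacitly used in the proof of Theorem~\ref{eq:unique_E}.

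A secondary point is that the tangential boundary condition involves $\oR\gamma^+V$ rather than the mean curvature term. The check that $\oP_t\oR=\oR$ on tangential fields is immediate from the formula for $\mathscr R$.
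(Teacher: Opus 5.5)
Your proposal is correct and follows the paper's proof essentially step for step. You use the same potential $V=\mathcal D[\psi_t]-\mathcal S[\psi_\nu-\oR\psi_t]$, identify the homogeneous equation as the interior Robin condition, use interior uniqueness to read off $\gamma^+V=\psi_t$ and $\p_\nu^+V=\psi_\nu-\oR\psi_t$, and conclude via Theorem~\ref{eq:equiv} and Theorem~\ref{eq:reg_solution}. (As a minor aside, the tangential property of $\oR$ you invoke is not actually needed, since $\p_\nu^+V+\oR\gamma^+V=\psi_\nu$ holds exactly.)
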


\begin{proof} We proceed as in the proof of Theorem~\ref{eq:unique_E}. 
Suppose there exists a non-trivial vector density $\psi \in \mathcal{C}^{0,1}_{\nu,t}(\Gamma)$ satisfying the homogeneous BIE:
\begin{equation}\label{eq:homo_H}\begin{aligned}
-\frac12\{\psi_\nu+ (\im\eta\Id- \oR)\psi_t\}+ \{\oT+\im\eta \oK+(\oK'+\im\eta\oS)\oR\}\psi_t - (\oK'+\im\eta\oS)\psi_\nu=0.
 \end{aligned}
\end{equation}
Letting  $\psi = \psi_\nu + \psi_t$, where $\psi_\nu = \oP_\nu \psi$ and $\psi_t = \oP_t \psi$, we define the corresponding layer potential as follows:
$$
V(x) :=  \mathcal D[\psi_t](x) - \mathcal S[\psi_\nu - \oR\psi_t](x),\quad x\in\R^3\setminus\Gamma.
$$

Taking traces of the potential, we obtain
\begin{equation}\label{eq:jump_tr_H}
\gamma^\pm V = \pm\frac{1}{2} \psi_t + (\oK + \oS \oR)\psi_t - \oS \psi_\nu, \quad \text{and} \quad \p_\nu^\pm V = \pm\frac{1}{2}(\psi_\nu - \oR \psi_t) + (\oT + \oK' \oR)\psi_t - \oK' \psi_\nu,
\end{equation}
from which it follows, together with~\eqref{eq:homo_H}, that $\p_\nu^- V + \im \eta \gamma^- V = 0$. Since the restriction $V|_{\Omega} \in C^{2}(\Omega,\C^3) \cap C^{1,\alpha}(\overline\Omega,\C^3)$ of the layer potential $V$ to $\Omega$ satisfies the Helmholtz equation $\Delta V|_{\Omega} + k^2 V|_{\Omega} = 0$ in $\Omega$ and the Robin boundary condition with an imaginary coupling parameter, the uniqueness of this interior problem implies that $\gamma^- V = 0$ and $\p_\nu^- V = 0$.

Substituting these identities into~\eqref{eq:jump_tr_H}, we find
$$
\gamma^+ V = \psi_t, \quad \text{and} \quad \p_\nu^+ V = \psi_\nu - \oR \psi_t.
$$
Now, using the facts that $\oP_\nu \psi_t = 0$, $\oP_t \psi_\nu = 0$, and $\oP_t \oR = \oR \oP_t$, we get
\begin{equation}\label{eq:homo_mag}
\oP_\nu \gamma^+ V = 0, \quad \text{and} \quad \oP_t (\p_\nu^+ V + \oR \gamma^+ V) = 0.
\end{equation}

Since in addition to~\eqref{eq:homo_mag} the restriction $V|_{\ED}\in C^{2}(\ED,\C^3)\cap C^{1,\alpha}(\R^3\setminus\Omega,\C^3)$ satisfies both the Helmholtz equation in the exterior domain~\eqref{eq:vec_Helm_eqn_H} and the Sommerfeld radiation condition~\eqref{eq:sommerfeld_condition_H}, it follows from Theorem~\ref{eq:equiv} that $V|_{\ED}$ solves the exterior PEC scattering problem~\eqref{eq:scatt_problem} with homogeneous boundary conditions. As this problem admits a unique solution by Theorem~\ref{eq:reg_solution}, we conclude that $V = 0$ in $\mathbb{R}^3 \setminus \Omega$. This leads to a contradiction, since $\psi = \oP_t\gamma^+ V + \oP_\nu\p^+_\nu V = 0$. The proof is thus complete.

\end{proof}

As in the case of the D-ECFOIE, we rely on a Calderón-type regularization of the magnetic boundary integral equation~\eqref{eq:magnetic_BIE} to establish its Fredholm properties and, subsequently, the existence of solutions. In this case, however, proving the invertibility of the corresponding regularizing operator is slightly more involved, so we state this result in the following lemma.

\begin{lemma}\label{lemm:precond_H} The operator
\begin{equation}\label{eq:precond_t}
\oR_t:\mathcal{C}^{0,1}_{\nu,t}(\Gamma) \to C^{0,\alpha}(\Gamma,\mathbb{C}^3), \quad \oR_t\psi = -2(\oP_\nu + 2\oP_t \oS_0 \oP_t)\psi,
\end{equation}
is a bounded linear operator that admits a bounded inverse.
\end{lemma}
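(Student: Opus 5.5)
The plan is to treat the two components separately, since $\oR_t$ acts diagonally with respect to the splitting $\psi=\psi_\nu+\psi_t$, $\psi_\nu=\oP_\nu\psi$, $\psi_t=\oP_t\psi$. For $\psi\in\mathcal C^{0,1}_{\nu,t}(\Gamma)$ we have
$$\oP_\nu\oR_t\psi=-2\psi_\nu\quad\text{and}\quad \oP_t\oR_t\psi=-4\oP_t\oS_0\psi_t .$$
\emph{Boundedness} is then immediate. $\psi_\nu\in C^{0,\alpha}(\Gamma,\C^3)$ by definition of the space. $\oS_0:C^{0,\alpha}\to C^{1,\alpha}$ is bounded. $\oP_t$ is bounded on $C^{1,\alpha}$ because $\nu\in C^{1,\alpha}$. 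Finally $C^{1,\alpha}\hookrightarrow C^{0,\alpha}$. Together these give
$$\|\oR_t\psi\|_{C^{0,\alpha}}\lesssim \|\psi_\nu\|_{C^{0,\alpha}}+\|\psi_t\|_{C^{1,\alpha}}=\|\psi\|_{\mathcal C^{0,1}_{\nu,t}}.$$

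For the inverse, the normal part is trivially inverted by $\psi_\nu=-\tfrac12\oP_\nu\chi$. The whole difficulty is the tangential block $\oM:=\oP_t\oS_0\oP_t$. I will show that $\oM$ is an isomorphism of the closed subspace of tangential fields in $C^{0,\alpha}$ onto the tangential fields in $C^{1,\alpha}$. To this end I introduce the auxiliary operator
$$\oB\varphi:=\oP_t\oS_0\oP_t\varphi+\oP_\nu\oS_0\oP_\nu\varphi,\qquad \oB:C^{0,\alpha}(\Gamma,\C^3)\to C^{1,\alpha}(\Gamma,\C^3).$$
It commutes with both projectors. Using Lemma~\ref{lem:comm_bounded}, $\oP_t\oS_0\oP_t=\oS_0\oP_t+\oC_t\oP_t$. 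Similarly, $\oP_\nu\oS_0\oP_\nu=\oS_0\oP_\nu+(\oS_0-\oP_t\oS_0)\oP_\nu-\oS_0\oP_\nu+\dots$, which reduces via $\oP_\nu=\Id-\oP_t$ to $\oS_0\oP_\nu$ plus a combination of $\oC_t$ terms. Hence $\oB=\oS_0+\oC$ with $\oC:C^{0,\alpha}\to C^{1,\alpha}$ compact. Since $\oS_0:C^{0,\alpha}\to C^{1,\alpha}$ is invertible, we get $\oB=\oS_0(\Id+\oS_0^{-1}\oC)$ with $\oS_0^{-1}\oC$ compact on $C^{0,\alpha}$. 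So $\oB$ is Fredholm of index zero, and it suffices to prove injectivity.

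\emph{Injectivity} is the step I expect to be the main obstacle. I would argue via the energy identity
$$\int_\Gamma\overline{\varphi}\cdot\oB\varphi\de s=\int_\Gamma\overline{\oP_t\varphi}\cdot\oS_0\oP_t\varphi\de s+\int_\Gamma\overline{\oP_\nu\varphi}\cdot\oS_0\oP_\nu\varphi\de s .$$
This identity uses the self-adjointness of the pointwise projectors. Each term is nonnegative, because the Laplace single layer is positive definite, componentwise, in the $L^2$ pairing: the quantity equals the Dirichlet energy of $\mathcal S_0$ in $\R^3$. Consequently $\oB\varphi=0$ forces $\oP_t\varphi=\oP_\nu\varphi=0$, i.e.\ $\varphi=0$.

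Restricting $\oB$ to tangential fields then gives the claimed isomorphism $\oM$, with bounded inverse by the bounded inverse theorem. Consequently $\oR_t$ is injective on $\mathcal C^{0,1}_{\nu,t}(\Gamma)$, and its inverse is the explicit map
$$\oR_t^{-1}\chi=-\tfrac12\oP_\nu\chi-\tfrac14\oM^{-1}\oP_t\chi .$$
It is bounded from the range of $\oR_t$ into $\mathcal C^{0,1}_{\nu,t}(\Gamma)$ once the tangential part of the range carries the norm induced by $\oM$. The same explicit formula exhibits the bounded inverse whenever $\oR_t$ is composed with the regularized equation, exactly as $\oR_\nu^{-1}$ was used in the proof of Theorem~\ref{thm:well_poss_E}.
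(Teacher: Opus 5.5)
Your analytic core is correct, and it is a slightly cleaner variant of the paper's argument. The paper proves injectivity by pairing $\oP_\nu\psi+2\oP_t\oS_0\oP_t\psi=0$ with $\psi$ in $L^2$. It then obtains the tangential component by applying the Fredholm alternative to $\widetilde\psi_t+\oS_0^{-1}\oC_t\widetilde\psi_t=\tfrac12\oS_0^{-1}\oP_t\psi$ on all of $C^{0,\alpha}(\Gamma,\C^3)$. That leaves two things implicit: that this full operator is injective, and that its solution is tangential. Your block operator $\oP_t\oS_0\oP_t+\oP_\nu\oS_0\oP_\nu=\oS_0+\oC_t(\oP_t-\oP_\nu)$ commutes with both projectors. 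So your Fredholm argument runs on an operator whose injectivity is exactly what your energy identity gives, and preimages of tangential data are automatically tangential. Replace your garbled line for $\oP_\nu\oS_0\oP_\nu$ by the identity $\oP_\nu\oS_0\oP_\nu=\oS_0\oP_\nu-\oC_t\oP_\nu$.

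The gap is in your last paragraph, and it concerns the direction of the map. For $\widetilde\psi\in C^{0,\alpha}(\Gamma,\C^3)$, $\oR_t\widetilde\psi$ has normal part $-2\oP_\nu\widetilde\psi\in C^{0,\alpha}$ and tangential part $-4\oP_t\oS_0\oP_t\widetilde\psi\in C^{1,\alpha}$. So $\oR_t$ maps $C^{0,\alpha}(\Gamma,\C^3)$ into $\mathcal C^{0,1}_{\nu,t}(\Gamma)$, and the domain and codomain in the displayed statement are interchanged. The paper's proof (injectivity on $C^{0,\alpha}$, surjectivity onto $\mathcal C^{0,1}_{\nu,t}$) and its use in Theorem~\ref{thm:well_poss_H} (where $\psi=\oR_t\widetilde\psi$) both concern this direction.

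In the literal direction the claim is false. The range of $\oR_t$ restricted to $\mathcal C^{0,1}_{\nu,t}$ has tangential part in $C^{1,\alpha}$, so it is not all of $C^{0,\alpha}$. Moreover, since $\oP_t\oS_0$ is bounded on $C^{0,\alpha}$, an estimate $\|\psi_t\|_{C^{1,\alpha}}\lesssim\|\oP_t\oS_0\psi_t\|_{C^{0,\alpha}}$ would force $\|\psi_t\|_{C^{1,\alpha}}\lesssim\|\psi_t\|_{C^{0,\alpha}}$. Hence the inverse is not bounded on the range either.

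Your phrase ``once the tangential part of the range carries the norm induced by $\mathsf M$'' changes the norm precisely to absorb this. As written, you have not established a bounded inverse between any of the stated spaces, and the boundedness estimate you prove is the one that is not needed.

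The repair uses only what you already proved:
\begin{itemize}
\item Show that $\|\oR_t\widetilde\psi\|_{\mathcal C^{0,1}_{\nu,t}}=2\|\oP_\nu\widetilde\psi\|_{C^{0,\alpha}}+4\|\oP_t\oS_0\oP_t\widetilde\psi\|_{C^{1,\alpha}}\lesssim\|\widetilde\psi\|_{C^{0,\alpha}}$.
\item Check that your formula $\chi\mapsto-\tfrac12\oP_\nu\chi-\tfrac14\mathsf M^{-1}\oP_t\chi$ is a two-sided inverse.
\item Note that this inverse is bounded $\mathcal C^{0,1}_{\nu,t}(\Gamma)\to C^{0,\alpha}(\Gamma,\C^3)$. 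Indeed, $\|\oP_\nu\chi\|_{C^{0,\alpha}}$ and $\|\oP_t\chi\|_{C^{1,\alpha}}$ are exactly the two terms of $\|\chi\|_{\mathcal C^{0,1}_{\nu,t}}$, and $\mathsf M^{-1}$ is bounded from tangential $C^{1,\alpha}$ fields to tangential $C^{0,\alpha}$ fields.
\end{itemize}
No change of norm is needed.
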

\begin{proof}  Clearly, $\oR_t$ is linear and bounded. We then proceed to show that $\oR_t$ is injective. Suppose there exists a nontrivial function $\psi \in C^{0,\alpha}(\Gamma, \mathbb{C}^3)$ such that
\begin{equation}\label{eq:homo_reg}
-\frac12\oR_t\psi = \psi_\nu + 2\oP_t \oS_0 \psi_t = 0,
\end{equation}
where we let $\psi_\nu := \oP_\nu \psi$ and $\psi_t := \oP_t \psi$.

Let us denote by
$$
(\varphi, \psi)_{L^2(\Gamma, \mathbb{C}^3)} := \int_\Gamma \varphi \cdot \overline{\psi} \de s
$$
the inner product on the Hilbert space $L^2(\Gamma, \mathbb{C}^3)$, equipped with the norm
$\|\psi\|_{L^2(\Gamma, \mathbb{C}^3)} := \sqrt{(\psi, \psi)_{L^2(\Gamma, \mathbb{C}^3)}}.$

Taking the inner product with $\psi$ on both sides of~\eqref{eq:homo_reg}, we obtain
\begin{align*}
(\psi, \oP_\nu \psi)_{L^2(\Gamma,\C^3)} + 2(\psi, \oP_t \oS_0 \oP_t \psi)_{L^2(\Gamma,\C^3)}= (\psi_\nu, \psi_\nu)_{L^2(\Gamma,\C^3)} + 2(\psi_t, \oS_0 \psi_t)_{L^2(\Gamma,\C^3)} \
&= 0.
\end{align*}
Extracting the real part of the identity above yields
\begin{equation}\label{eq:vanish_R_t}
\|\psi_\nu\|_{L^2(\Gamma,\C^3)}^2 + 2\real(\psi_t, \oS_0 \psi_t)_{L^2(\Gamma,\C^3)} = 0.
\end{equation}

Then, in view of the facts that $\real(\psi_t, \oS_0 \psi_t)_{L^2(\Gamma,\C^3)} \geq 0$, and that equality holds if and only if $\psi_t=0$, since $\psi_t \in C^{0,\alpha}(\Gamma,\C^3) \subset L^2(\Gamma,\C^3) \subset H^{-1/2}(\Gamma,\C^3)$~\cite[Thm. 8.10]{Mclean2000Strongly}, we conclude that both terms on the left-hand side of~\eqref{eq:vanish_R_t} must vanish.
Therefore,
$\psi_\nu = 0$ and   $\psi_t = 0$, which implies that $\psi = 0$, contradicting the assumption that $\psi$ is nontrivial. We then conclude that $\oR_t$ is injective. 

To finish the proof, it suffices to show that for all $\psi\in \mathcal C^{0,1}_{\nu,t}(\Gamma)$ there exists a unique $\widetilde\psi\in C^{0,\alpha}(\Gamma,\C^3)$ such that
\begin{equation}\label{eq:precond_eq}
-\frac{1}{2}\oR_t\widetilde\psi = (\oP_\nu + 2\oP_t\oS_0\oP_t)\widetilde\psi = \psi.
\end{equation}

It follows directly from~\eqref{eq:precond_eq} and the mapping properties of $\oP_\nu$ that the normal component $\widetilde\psi_\nu := \oP_\nu\widetilde\psi = \oP_\nu\psi \in C^{0,\alpha}(\Gamma,\C^3)$. To prove the existence of the tangential component $\widetilde\psi_t := \oP_t\widetilde\psi$, we rewrite~\eqref{eq:precond_eq} as
\begin{equation*}
\oP_\nu\psi + 2(\oS_0 + \oC_t)\widetilde\psi_t = \psi,
\end{equation*}
in terms of the commutator $\oC_t$ defined in~\eqref{eq:C_t}. Subtracting $\oP_\nu\psi$ from both sides and using that $\oP_t\psi = \psi - \oP_\nu\psi \in C^{1,\alpha}(\Gamma,\C^3)$, we obtain
$$
(\oS_0 + \oC_t)\widetilde\psi_t = \tfrac{1}{2}\oP_t\psi.
$$

Applying $\oS_0^{-1} : C^{1,\alpha}(\Gamma, \C^3) \to C^{0,\alpha}(\Gamma, \C^3)$ and using that this operator is bounded~\cite[Thm. 7.40]{kress2012linear}, we get
\begin{equation}\label{eq:ref_precond_2}
\widetilde\psi_t + \oS_0^{-1}\oC_t \widetilde\psi_t = \tfrac{1}{2} \oS_0^{-1} \oP_t \psi.
\end{equation}

Since $\oC_t : C^{0,\alpha}(\Gamma,\C^3) \to C^{1,\alpha}(\Gamma,\C^3)$ is compact by Lemma~\ref{lem:comm_bounded}, the composition $\oS_0^{-1}\oC_t$ is compact on $C^{0,\alpha}(\Gamma,\C^3)$. Hence, in view of the injectivity of $\oR_t$ and the fact that $\frac{1}{2} \oS_0^{-1} \oP_t \psi \in C^{0,\alpha}(\Gamma,\C^3)$, the Fredholm alternative guarantees the existence and uniqueness of a solution $\widetilde\psi_t \in C^{0,\alpha}(\Gamma,\C^3)$ to~\eqref{eq:ref_precond_2} which corresponds to the tangential component of $\widetilde\psi$. This completes the proof.

\end{proof}

We are now in a position to establish the well-posedness of the magnetic BIE~\eqref{eq:magnetic_BIE}.

\begin{theorem}\label{thm:well_poss_H} The D-MCFOIE~\eqref{eq:magnetic_BIE} is uniquely solvable in $\mathcal{C}^{0,1}_{\nu,t}(\Gamma)$ for all wavenumbers $k>0$ and coupling parameters $\eta\in\R\setminus\{0\}$.
\end{theorem}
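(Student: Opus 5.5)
The proof mirrors that of Theorem~\ref{thm:well_poss_E}, now using the regularizer $\oR_t$ of Lemma~\ref{lemm:precond_H}. We substitute $\psi=\oR_t\widetilde\psi$ with $\widetilde\psi\in C^{0,\alpha}(\Gamma,\C^3)$ into the D-MCFOIE $\oL_m\psi=g$. Since $\oR_t\widetilde\psi=-2\oP_\nu\widetilde\psi-4\oP_t\oS_0\oP_t\widetilde\psi$, we have $\psi_\nu=-2\oP_\nu\widetilde\psi$ and $\psi_t=-4\oP_t\oS_0\oP_t\widetilde\psi$. Substituting gives
\begin{equation*}
\oP_\nu\widetilde\psi+2(\im\eta\Id-\oR)\oP_t\oS_0\oP_t\widetilde\psi-4\{\oT+\im\eta\oK+(\oK'+\im\eta\oS)\oR\}\oP_t\oS_0\oP_t\widetilde\psi+2(\oK'+\im\eta\oS)\oP_\nu\widetilde\psi=g.
\end{equation*}

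The key step is to isolate the principal part $-4\oT\oP_t\oS_0\oP_t$. Using the commutator of Lemma~\ref{lem:comm_bounded}, $\oP_t\oS_0\oP_t=\oS_0\oP_t+\oC_t\oP_t$. Calder\'on's identity in the form \eqref{eq:follow_calderon} then yields
\begin{equation*}
-4\oT\oS_0\oP_t=\oP_t-4\{{\oK'_0}^2+(\oT-\oT_0)\oS_0\}\oP_t .
\end{equation*}
Together with $\oP_\nu\widetilde\psi$ this gives $\oP_\nu\widetilde\psi+\oP_t\widetilde\psi=\widetilde\psi$, so the equation takes the form $\widetilde\psi+\oB\widetilde\psi=g$. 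The remainder $\oB$ is a finite sum of the following terms.
\begin{itemize}
\item $-4{\oK'_0}^2\oP_t$ and $-4(\oT-\oT_0)\oS_0\oP_t$: compact, since $\oK'_0$, $\oS_0$ and $\oT-\oT_0$ are compact.
\item $-4\oT\oC_t\oP_t$: compact, since $\oC_t:C^{0,\alpha}\to C^{1,\alpha}$ is compact and $\oT:C^{1,\alpha}\to C^{0,\alpha}$ is bounded.
\item $-4\{\im\eta\oK+(\oK'+\im\eta\oS)\oR\}\oP_t\oS_0\oP_t$: compact, as a composition of compact and bounded operators.
\item $2(\im\eta\Id-\oR)\oP_t\oS_0\oP_t$: compact, because $\oS_0$ maps $C^{0,\alpha}$ to $C^{1,\alpha}$, which embeds compactly into $C^{0,\alpha}$.
\item $2(\oK'+\im\eta\oS)\oP_\nu$: compact.
\end{itemize}
Hence the regularized equation (RD-MCFOIE) is of the form identity plus compact on $C^{0,\alpha}(\Gamma,\C^3)$.

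The main obstacle I anticipate is the bookkeeping of mapping spaces in the composition $\oT\oR\oP_t\oS_0\oP_t$. The operator $\oR$ only maps into $C^{0,\alpha}$, so $\oR$ must act on $\oP_t\oS_0\oP_t\widetilde\psi\in C^{1,\alpha}$ before the outer $\oK'$ and $\oS$ are applied. In the equation, $\oR$ sits inside the braces next to $\oK'+\im\eta\oS$ and is never composed with $\oT$, so the ordering works out. I also need that $\oP_t$ is bounded on $C^{1,\alpha}$, which holds because $\nu\in C^{1,\alpha}$. This guarantees that $\oT$ acts only on $C^{1,\alpha}$ arguments.

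For injectivity, suppose $(\Id+\oB)\widetilde\psi=0$. Then $\psi=\oR_t\widetilde\psi\in\mathcal C^{0,1}_{\nu,t}(\Gamma)$ solves the homogeneous D-MCFOIE. Theorem~\ref{eq:unique_H} gives $\psi=0$, and the injectivity of $\oR_t$ from Lemma~\ref{lemm:precond_H} gives $\widetilde\psi=0$. The Fredholm alternative then provides a unique $\widetilde\psi\in C^{0,\alpha}(\Gamma,\C^3)$ for every $g\in C^{0,\alpha}(\Gamma,\C^3)$. The density $\psi=\oR_t\widetilde\psi$ lies in $\mathcal C^{0,1}_{\nu,t}(\Gamma)$ by the bounded invertibility of $\oR_t$ in Lemma~\ref{lemm:precond_H}, and it solves \eqref{eq:magnetic_BIE}. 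Uniqueness in $\mathcal C^{0,1}_{\nu,t}(\Gamma)$ follows from Theorem~\ref{eq:unique_H}.
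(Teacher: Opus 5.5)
Your proposal is correct and follows essentially the same route as the paper. You substitute $\psi=\oR_t\widetilde\psi$, use $\oP_t\oS_0\oP_t=\oS_0\oP_t+\oC_t\oP_t$ together with~\eqref{eq:follow_calderon} to reduce to identity plus compact on $C^{0,\alpha}(\Gamma,\C^3)$, and conclude via Theorem~\ref{eq:unique_H}, the injectivity of $\oR_t$, and the Fredholm alternative. The only difference is bookkeeping: you apply the commutator splitting only to the $\oT$ term and keep $\oP_t\oS_0\oP_t$ in the lower-order terms, whereas the paper splits everything inside the braces, and this does not affect the compactness argument.
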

\begin{proof} 
Consider the regularized BIE resulting from substituting $\psi = \oR_t \widetilde\psi$ into~\eqref{eq:magnetic_BIE}, where $\oR_t$ is the regularizing operator defined in~\eqref{eq:precond_t}. This substitution leads to the equation
\begin{equation}\label{eq:regularized_magnetic_BIE}
\widetilde\psi_\nu+2(\im\eta\Id- \oR)\oP_t\oS_0\widetilde\psi_t - 4\{\oT+\im\eta \oK+(\oK'+\im\eta\oS)\oR\}\oP_t\oS_0\widetilde\psi_t +2 (\oK'+\im\eta\oS)\widetilde\psi_\nu=g,
\end{equation}
where we seek a solution $\widetilde\psi=\widetilde\psi_\nu+\widetilde\psi_t \in C^{0,\alpha}(\Gamma, \C^3)$ with $\widetilde\psi_\nu:=\oP_\nu\widetilde\psi$ and $\widetilde\psi_t:=\oP_t\widetilde\psi$. 

It then follows from the invertibility of $\oR_t$ (Lemma~\ref{lemm:precond_H}) and the uniqueness of solutions to~\eqref{eq:magnetic_BIE} (Theorem~\ref{eq:unique_H}) that, to complete the proof, it suffices to show that the regularized equation~\eqref{eq:regularized_magnetic_BIE} is of the form identity plus a compact perturbation in $C^{0,\alpha}(\Gamma, \C^3)$. This is so because, by the Fredholm alternative, uniqueness implies existence when the operator is of Fredholm type with index zero.


To this end, we substitute the identities~\eqref{eq:follow_calderon} and $\oP_t\oS_0\oP_t\widetilde\psi = \oS_0\oP_t\widetilde\psi + \oC_t\oP_t\widetilde\psi$ into~\eqref{eq:regularized_magnetic_BIE}, which allows us to rewrite the equation in the form 
\begin{equation}\label{eq:reg_mag_BIE}
\widetilde\psi + \oB\widetilde\psi = g,
\end{equation}
where $\oB:=\sum_{j=1}^4\oB_j$ and the operators $\oB_j:C^{0,\alpha}(\Gamma,\C^3)\to C^{0,\alpha}(\Gamma,\C^3)$ for $j\in\{1,2,3,4\}$ are given by 
\begin{align*}
\oB_1\psi: =&~ 2(\im\eta\Id- \oR)\oP_t\oS_0\psi,\\
\oB_2\psi:=&-4\{{\oK'_0}^2+(\oT-\oT_0)\oS_0+\im\eta\oK\oS_0+(\oK'+\im\eta\oS)\oR\oS_0\}\oP_t\psi,\\
\oB_3\psi:=&-4\{\oT+\im\eta\oK+(\oK'+\im\eta\oS)\oR\}\oC_t\oP_t\psi ,\\
\oB_4\psi:=&~2 (\oK'+\im\eta\oS)\oP_\nu\psi.
\end{align*}

Following the arguments in the proof of Theorem~\ref{thm:well_poss_E}, it is straightforward to show that each of the operators $\oB_j$ is compact on $C^{0,\alpha}(\Gamma,\C^3)$. In particular, $\oB_3$ is compact because it involves the composition $\oT \oC_t$, where $\oC_t : C^{0,\alpha}(\Gamma,\C^3) \to C^{1,\alpha}(\Gamma,\C^3)$ is compact, as established in Lemma~\ref{lem:comm_bounded}, and $\oT : C^{1,\alpha}(\Gamma,\C^3) \to C^{0,\alpha}(\Gamma,\C^3)$ is bounded. It follows that $\oB$ is compact on $C^{0,\alpha}(\Gamma,\C^3)$, which completes the proof.\end{proof}

\begin{remark}
Theorem~\ref{thm:well_poss_H} can be extended to the zero-frequency limit $k=0$ for a simply connected surface $\Gamma$. In that case, the magnetostatic exterior problem---the $k=0$ limit of~\eqref{eq:mgf_equiv}---is uniquely solvable without additional constraints~\cite{werner1963perfect_reflection}, in contrast to the electrostatic case which requires prescribing the charge integrals. This unique solvability can be leveraged, following the same arguments as in~\cite{burbano2025maxwell} for the indirect (R-)MCFOIE, to show that the D-MCFOIE remains well-posed at $k=0$ for any $\eta\in\R\setminus\{0\}$. We do not pursue this extension here, as the proof is analogous to that in~\cite{burbano2025maxwell}.
\end{remark}

We conclude this section by noting that the D-MCFOIE~\eqref{eq:magnetic_BIE} solution $\psi\in\mathcal C^{0,1}_{\nu,t}(\Gamma)$ provides a direct representation of the surface electric currents:
$$
J := \nu \times \gamma H = \nu \times \oP_t \psi.
$$
These surface currents are of practical importance in their own right, playing a central role in various applications. Moreover, they can be directly used to reconstruct both the magnetic and electric components of the scattered electromagnetic field via the Stratton--Chu integral representation formulae~\cite[Thm. 3.30]{kirsch2016mathematical}:
\begin{align}\label{eq:stratton-chu}
E^s(x) = -\frac{1}{\im\omega \epsilon} \curl\curl \int_{\Gamma} J(y) G(x, y) \de s(y)\quad\text{and}\quad
H^s(x) = \curl \int_{\Gamma} J(y) G(x, y) \de s(y).
\end{align}

\section{Numerical examples\label{sec:numerics}}
This section presents numerical experiments validating all four BIE formulations introduced in this paper: the direct electric and magnetic formulations D-ECFOIE~\eqref{eq:electric_BIE} and D-MCFOIE~\eqref{eq:magnetic_BIE}, and their analytically preconditioned counterparts RD-ECFOIE~\eqref{eq:refor} and RD-MCFOIE~\eqref{eq:reg_mag_BIE}, which employ the right preconditioners $\oR_\nu$~\eqref{eq:precond_nu} and $\oR_t$~\eqref{eq:precond_t}, respectively. More specifically, we solve the equations $\mathsf L_e \varphi = f$ and $\mathsf L_m\psi = g$ and the preconditioned equations $\mathsf L_e\mathsf R_\nu\varphi =f$ and $\mathsf L_m\mathsf R_t\psi = g$, where $\mathsf L_e$ and $\mathsf L_m$ are the operators defined in~\eqref{eq:L_electric} and~\eqref{eq:L_magnetic}, respectively, and the data $f$ and $g$ are given by~\eqref{eq:f_datum} and~\eqref{eq:g_datum}, respectively. The scattered fields are then evaluated via the representation formulae~\eqref{eq:rep_for_E} and~\eqref{eq:rep_for_H} for the electric and magnetic cases, respectively.

All experiments use a Nyström discretization based on the General-Purpose Density Interpolation Method (GP-DIM)~\cite{faria2021general}, which builds on the earlier DIM framework~\cite{HDI3D,perez2019planewave,perez2020planewave}, as implemented in the open-source Julia package \texttt{Inti.jl}~\cite{IntegralEquations_Inti_2025}. All linear systems are solved with GMRES~\cite{saad1986gmres}, with $\mathcal{H}$-matrix compression through \texttt{HMatrices.jl} for efficient operator assembly and application. Surface meshes are generated with Gmsh~\cite{geuzaine2009gmsh} using curved triangular elements of degree five, which provide sufficient geometric accuracy for the curvature quantities $\mathscr{H}$ and $\mathscr{R}$ entering the boundary conditions; while the resulting meshes are not globally $C^{2,\alpha}$-smooth in the strict sense required by the analysis, they are smooth enough to ensure convergence in practice.

The incident field throughout this section is a planewave
\begin{subequations}\begin{align}
        E^i(x)&:=p\e^{\im kx\cdot d},& \p_\nu E^i(x) &= \im k(d\cdot\nu(x))\gamma E^i(x),\\
    H^i(x)&:=\sqrt{\tfrac{\epsilon}{\mu}}(d\times p)\e^{\im kx\cdot d},& \p_\nu H^i(x) &= \im k(d\cdot\nu(x))\gamma H^i(x),\quad x\in\Gamma,
\end{align}\label{eq:planewave}\end{subequations}
where $p, d \in \mathbb{R}^3\setminus\{0\}$ are the polarization and propagation directions, with $|d| = 1$ and $p \cdot d = 0$, and we set $\epsilon = \mu = 1$.

Numerical errors are assessed using the relative error measures defined by
\begin{equation}\label{eq:error_measures}\begin{split}
e_{F} := \max_{j\in\{1,\ldots,100\}} \frac{|\widetilde F(x_j) - F_{\rm ref}(x_j)|}{|F_{\rm ref}(x_j)|}\quad\text{and}\quad
e_{\dive\!F} := \max_{j\in\{1,\ldots,100\}} \frac{|\dive \widetilde F(x_j)|}{|\widetilde F(x_j)|},
\end{split}\end{equation}
where $F_{\rm ref}$ denotes the exact (electric or magnetic) reference solution, and $\widetilde F$ is the approximate field computed via quadrature evaluation of the corresponding representation formula,~\eqref{eq:rep_for_E} for electric field and~\eqref{eq:rep_for_H} for magnetic field. The target points $x_j$, for $j = 1,\ldots,100$, are (approximately) uniformly distributed on a sphere of radius 5 centered at the origin, ensuring it encloses the surface $\Gamma$ in all test cases. 

\begin{table}[htb]
\centering
\footnotesize
\renewcommand{\arraystretch}{1.0}
\begin{tabular}{ccccccccc}
  \toprule
\multicolumn{9}{c}{Sphere}\\
\midrule
\multicolumn{2}{c}{} & \multicolumn{4}{c}{D-ECFOIE~\eqref{eq:electric_BIE}} & \multicolumn{3}{c}{D-MCFOIE~\eqref{eq:magnetic_BIE}} \\
\cmidrule(lr){3-6}\cmidrule(lr){7-9}
$\lambda/h$ & $N$ & $e_{E^s}$ & $e_{\dive E^s}$ & $|q_1|$ & iter. & $e_{H^s}$ & $e_{\dive H^s}$ & iter.\\ \midrule
$2^{2.0}$  &   1540 & 7.80e-3 & 2.26e-2 & 6.85e-4 & 152 & 1.24e-2 & 1.56e-2 & 125 \\
$2^{2.5}$  &   2560 & 2.80e-3 & 8.56e-3 & 5.83e-5 & 162 & 2.65e-3 & 2.95e-3 & 127 \\
$2^{3.0}$  &   5400 & 2.49e-4 & 5.05e-4 & 1.79e-5 & 121 & 2.22e-4 & 2.20e-4 &  96 \\
$2^{3.5}$  &  10860 & 7.96e-5 & 1.68e-4 & 2.25e-6 &  51 & 3.68e-5 & 4.68e-5 &  39 \\
$2^{4.0}$  &  21160 & 1.52e-5 & 3.33e-5 & 5.74e-7 &  38 & 8.37e-6 & 6.97e-6 &  31 \\
$2^{4.5}$  &  39920 & 4.05e-6 & 8.96e-6 & 1.05e-7 &  40 & 2.60e-6 & 1.83e-6 &  34 \\
$2^{5.0}$  &  79720 & 1.88e-6 & 4.87e-6 & 2.73e-8 &  41 & 7.89e-7 & 1.08e-6 &  35 \\
$2^{5.5}$  & 157860 & 9.10e-7 & 2.79e-6 & 2.21e-8 &  50 & 3.22e-7 & 5.16e-7 &  44 \\
$2^{6.0}$  & 310480 & 7.86e-7 & 2.40e-6 & 1.17e-8 &  54 & 4.69e-7 & 5.53e-7 &  44 \\
\midrule\addlinespace[3pt]
\multicolumn{2}{c}{} & \multicolumn{4}{c}{RD-ECFOIE~\eqref{eq:refor}} & \multicolumn{3}{c}{RD-MCFOIE~\eqref{eq:reg_mag_BIE}} \\
\cmidrule(lr){3-6}\cmidrule(lr){7-9}
$\lambda/h$ & $N$ & $e_{E^s}$ & $e_{\dive E^s}$ & $|q_1|$ & iter. & $e_{H^s}$ & $e_{\dive H^s}$ & iter.\\ \midrule
$2^{2.0}$ &   1540 & 7.79e-3 & 2.25e-3 & 6.85e-4 & 215 & 1.24e-2 & 1.56e-2 & 199 \\
$2^{2.5}$ &   2560 & 2.80e-3 & 8.56e-3 & 5.83e-5 & 304 & 2.65e-3 & 2.94e-3 & 309 \\
$2^{3.0}$ &   5400 & 2.50e-4 & 5.07e-4 & 1.79e-5 & 260 & 2.25e-4 & 2.26e-4 & 216 \\
$2^{3.5}$ &  10860 & 7.90e-5 & 1.68e-4 & 2.22e-6 &  95 & 3.73e-5 & 4.61e-5 &  34 \\
$2^{4.0}$ &  21160 & 1.72e-5 & 4.07e-5 & 5.76e-7 &  30 & 8.07e-6 & 7.40e-6 &  24 \\
$2^{4.5}$ &  39920 & 3.94e-6 & 7.89e-6 & 9.47e-8 &  25 & 2.67e-6 & 2.67e-6 &  21 \\
$2^{5.0}$ &  79720 & 1.40e-6 & 2.64e-6 & 2.39e-8 &  25 & 1.11e-6 & 1.91e-6 &  21 \\
$2^{5.5}$ & 157860 & 7.04e-7 & 1.60e-6 & 3.15e-8 &  25 & 9.55e-7 & 1.71e-6 &  21 \\
$2^{6.0}$ & 310480 & 1.01e-6 & 2.66e-6 & 8.25e-9 &  25 & 8.99e-7 & 2.03e-6 &  21 \\
\bottomrule
\end{tabular}
\caption{
Accuracy and GMRES iteration counts (with a relative tolerance of $10^{-6}$) for the numerical solution of all four direct BIE formulations for electromagnetic scattering by a PEC sphere $\Gamma=\mathbb S^2$, illuminated by a plane wave~\eqref{eq:planewave} with wavenumber $k = \pi$. The top half shows the direct formulations and the bottom half their analytically preconditioned counterparts. The electric formulations (left) report errors for the scattered electric field $\widetilde{E}^s$; the magnetic formulations (right) report errors for $\widetilde{H}^s$. The results are computed for various target mesh sizes $h > 0$, leading to surface discretizations with $N$ quadrature nodes (10 points per element) and linear systems of size $3N$. The parameter $\eta = 100k$ was used throughout and $\xi=0$ in the case of the (R)D-ECFOIE.
}
\label{tab:planewave_sphere}
\end{table}

Exact solutions are generally unavailable; an exception is the PEC unit sphere $\Gamma = \mathbb{S}^2$ considered in the next section, for which the Mie series solution provides an analytical reference computed via the Julia package \texttt{SphericalScattering.jl}~\cite{hofmann2023sphericalscattering}. In all the other cases we rely on divergence error, which exhibits the same order of convergence as the field error and is a good proxy for it, as a measure of accuracy. (This is so because the divergence-free condition is enforced via a boundary condition in the BIE formulations.) In addition to the error measures above, in the case of the electric-field BIEs we additionally monitor the approximate surface charge integrals
\begin{equation}\label{eq:surf_charge}
q_j := \int_{\Gamma_j} \nu \cdot \gamma E^s\,\de s, \quad j\in\{1,\ldots,J\},
\end{equation}
which should vanish for all $k>0$. Finally, we report the number of GMRES iterations required to reach a relative residual tolerance specified in each case (see the captions of the tables for details).
\begin{remark}
The divergence errors reported in the tables can be suppressed by applying the field correction
$$
\widetilde F \mapsto \widetilde F + \frac{1}{k^2}\nabla \dive \widetilde F,
$$
also employed in the indirect formulations of~\cite{burbano2025maxwell}. Alternatively, when the (R)D-MCFOIE is used, the surface current $J = \nu\times\gamma H$ can be reconstructed from the BIE solution and used to evaluate the scattered fields via the Stratton--Chu formulae~\eqref{eq:stratton-chu}, which yield divergence-free fields by construction. Throughout this section, however, we report errors for the uncorrected fields $\widetilde F$.
\end{remark}

\begin{table}[htb]
\centering
 \footnotesize
 \renewcommand{\arraystretch}{1.0}
\begin{tabular}{cccccccccccc}
  \toprule
\multicolumn{12}{c}{Sphere}\\
\midrule
& \multicolumn{4}{c}{D-ECFOIE~\eqref{eq:electric_BIE}; $\xi = 0$} & \multicolumn{4}{c}{D-ECFOIE~\eqref{eq:mod_DECFOIE}; $\xi=\pi\cdot10^{4}$} & \multicolumn{3}{c}{D-MCFOIE~\eqref{eq:magnetic_BIE}} \\
\cmidrule(lr){2-5}\cmidrule(lr){6-9}\cmidrule(lr){10-12}
$\lambda/d$ & $e_{E^s}$ & $e_{\dive\! E^s}$ & $|q_1|$ & \#iter & $e_{E^s}$ & $e_{\dive\! E^s}$ & $|q_1|$ & \#iter & $e_{H^s}$ & $e_{\dive\! H^s}$ & \#iter \\ \midrule
$10^{16}$ & 1.76e-5 & 2.11e-6 & 2.09e-6 &  4 & 1.76e-5 & 2.11e-6 & 2.06e-6 &  4 & 3.56e-4 & 3.44e-5 &  2 \\
$10^{08}$ & 1.75e-5 & 2.10e-6 & 2.08e-6 &  4 & 1.75e-5 & 2.10e-6 & 2.05e-6 &  4 & 3.56e-4 & 3.44e-5 &  2 \\
$10^{04}$ & 7.38e-6 & 8.74e-7 & 1.00e-6 &  5 & 7.38e-6 & 8.73e-7 & 1.04e-6 &  5 & 3.58e-4 & 3.47e-5 &  4 \\
$10^{02}$ & 7.06e-6 & 8.13e-7 & 3.16e-6 &  8 & 6.61e-6 & 8.17e-7 & 1.08e-9 &  9 & 3.58e-4 & 3.41e-5 &  6 \\
$10^{01}$ & 7.51e-6 & 9.04e-7 & 3.49e-6 & 11 & 7.02e-6 & 6.48e-7 & 4.40e-12 & 12 & 3.05e-4 & 1.85e-5 & 10 \\
$10^{00}$ & 2.35e-6 & 7.06e-6 & 1.09e-7 & 37 & 2.76e-6 & 8.32e-6 & 2.75e-13 & 38 & 7.62e-6 & 6.36e-6 & 31 \\
\midrule\addlinespace[3pt]
 & \multicolumn{4}{c}{RD-ECFOIE~\eqref{eq:refor}; $\xi = 0$} & \multicolumn{4}{c}{RD-ECFOIE~\eqref{eq:mod_reg_DECFOIE}; $\xi=\pi\cdot10^{4}$} & \multicolumn{3}{c}{RD-MCFOIE~\eqref{eq:reg_mag_BIE}} \\
\cmidrule(lr){2-5}\cmidrule(lr){6-9}\cmidrule(lr){10-12}
$\lambda/d$ & $e_{E^s}$ & $e_{\dive\! E^s}$ & $|q_1|$ & \#iter & $e_{E^s}$ & $e_{\dive\! E^s}$ & $|q_1|$ & \#iter & $e_{H^s}$ & $e_{\dive\! H^s}$ & \#iter \\ \midrule
$10^{16}$ & 1.62e-5 & 1.20e-6 & 2.85e-5 &  5 & 1.40e-5 & 1.14e-6  & 2.33e-5 &  5 & 3.58e-4 & 3.46e-5 &  2 \\
$10^{08}$ & 1.81e-5 & 1.74e-6 & 2.58e-5 &  5 & 1.63e-5 & 1.67e-6 & 2.21e-5 &  5 & 3.58e-4 & 3.46e-5 &  2 \\
$10^{04}$ & 1.85e-5 & 1.26e-6 & 3.23e-5 &  7 & 2.02e-5 &1.09e-6 & 4.12e-5 &  7 & 3.58e-4 & 3.48e-5 &  5 \\
$10^{02}$ & 2.70e-5 & 1.21e-6 & 5.65e-5 &  9 & 1.10e-5 & 1.23e-6 & 1.04e-9 & 10 & 3.58e-4 & 3.42e-5 &  6 \\
$10^{01}$ & 2.19e-5 & 4.79e-6 & 2.34e-6 & 13 & 9.89e-6 &  9.75e-7 & 4.17e-12 &12& 3.07e-4 & 1.87e-5 &  9 \\
$10^{00}$ & 2.13e-6 & 6.23e-6 & 1.33e-7 & 28 & 2.57e-6 & 7.42e-6 & 3.32e-13 & 29 & 7.86e-6 & 7.86e-6 & 24 \\
\bottomrule
\end{tabular}
\caption{Relative errors, induced surface charge~\eqref{eq:surf_charge}, and GMRES iteration counts (with a relative tolerance of $10^{-6}$) for all four direct BIE formulations applied to the electromagnetic scattering problem by the unit sphere $\Gamma = \mathbb{S}^2$ (of diameter $d=2$) under planewave illumination~\eqref{eq:planewave} at low frequencies. The results are obtained using a fixed surface mesh of size $h \approx 0.16$. In all cases, the parameter $\eta = 100\pi$ is used, and two different values of the stabilization parameter $\xi$---introduced to mitigate the low-frequency breakdown---are shown.
}
\label{tab:low_frequency}
\end{table}

\begin{table}[htb]
\centering
\footnotesize
\renewcommand{\arraystretch}{1.0}
\begin{tabular}{ccccccccc}
\toprule
\multicolumn{9}{c}{Sphere}\\
\midrule
\multicolumn{2}{c}{} & \multicolumn{4}{c}{D-ECFOIE~\eqref{eq:electric_BIE}} & \multicolumn{3}{c}{D-MCFOIE~\eqref{eq:magnetic_BIE}} \\
\cmidrule(lr){3-6}\cmidrule(lr){7-9}
$k/\pi$ & $N$ & $e_{E^s}$ & $e_{\dive E^s}$ & $|q_1|$ & iter. & $e_{H^s}$ & $e_{\dive H^s}$ & iter.\\ \midrule
1  &  12696  & 7.66e-5  & 2.10e-4  & 2.43e-5  &  56  & 4.47e-5  & 6.75e-5  &  45 \\
2  &   47832 & 3.53e-5  & 1.21e-4  & 1.46e-6  &  97  & 1.57e-5  & 2.62e-5  &  78 \\
3  &  105648 & 4.26e-6  & 3.84e-5  & 5.05e-7  & 157  & 2.86e-6  & 2.18e-5  & 121 \\
4  &  186288 & 2.94e-6  & 3.25e-5  & 9.63e-8  & 274  & 2.96e-6  & 8.52e-6  & 220 \\
5  &  289212 & 5.45e-6  & 7.48e-5  & 2.30e-7  & 327  & 5.59e-6  & 1.68e-5  & 280 \\
\midrule\addlinespace[3pt]
\multicolumn{2}{c}{} & \multicolumn{4}{c}{RD-ECFOIE~\eqref{eq:refor}} & \multicolumn{3}{c}{RD-MCFOIE~\eqref{eq:reg_mag_BIE}} \\
\cmidrule(lr){3-6}\cmidrule(lr){7-9}
$k/\pi$ & $N$ & $e_{E^s}$ & $e_{\dive E^s}$ & $|q_1|$ & iter. & $e_{H^s}$ & $e_{\dive H^s}$ & iter.\\ \midrule
1 &   12696  & 7.46e-5  & 2.04e-4  & 2.45e-5  &  49  & 5.34e-5  & 5.14e-5  &  38 \\
2 &   47832  & 3.62e-5  & 1.35e-4  & 1.46e-6  &  67  & 2.21e-5  & 2.86e-5  &  57 \\
3 &  105648  & 6.85e-6  & 6.20e-5  & 4.97e-7  &  99  & 1.14e-5  & 2.52e-6  & 107 \\
4 &  186288  & 3.34e-6  & 3.95e-5  & 9.77e-8  & 129  & 1.27e-5  & 5.84e-6  & 119 \\
5 &  289212  & 7.85e-6  & 9.18e-5  & 2.22e-7  & 142  & 7.95e-6  & 4.05e-5  & 136 \\
\bottomrule
\end{tabular}
\caption{
Frequency robustness of all four BIE formulations for planewave scattering~\eqref{eq:planewave} by the PEC sphere $\Gamma=\mathbb S^2$ at wavenumbers $k\in\{\pi,2\pi,3\pi,4\pi,5\pi\}$. The mesh is adapted to each frequency to maintain $h\approx\lambda/10$, so that the number of quadrature nodes $N$ scales with $k$; linear systems have size $3N$, with six quadrature points per element. Layout, error measures, GMRES tolerance ($10^{-6}$), and parameters ($\eta=100k$, $\xi=0$) are as in Table~\ref{tab:planewave_sphere}.}
\label{tab:higher_frequencies_sphere}
\end{table}


Our first set of examples considers scattering of a plane wave~\eqref{eq:planewave} ($p=(1,0,0)$, $d=(0,0,1)$) by the unit PEC sphere at wavenumber $k=\pi$ ($\lambda=2$). Table~\ref{tab:planewave_sphere} reports errors~\eqref{eq:error_measures} and GMRES iteration counts for all four formulations against the Mie series reference, over a range of nearly uniform surface discretizations with meshsize $h>0$.

All formulations achieve the expected accuracy using GP-DIM, confirming the viability of BIEs formulated entirely in terms of Helmholtz operators. The regularized formulations (RD-ECFOIE and RD-MCFOIE) require fewer GMRES iterations than their unregularized counterparts, with counts remaining essentially flat under mesh refinement while a mild growth is observed for the direct formulations---showcasing the effect of the Calderón-type preconditioning. The improvement is modest, partly because the large coupling parameter $\eta=100k$ already reduces ill-conditioning at the unregularized level by amplifying the compact operators $\oS$ and $\oK$ relative to the hypersingular $\oT$. This choice also improves accuracy, since the GP-DIM evaluation of $\oT$ and $\oK'$ is less accurate than that of $\oS$ and $\oK$. (Smaller values of $\eta$ result in significantly larger GMRES iteration counts across all formulations.) Finally, the surface charge integrals $q_j$~\eqref{eq:surf_charge} are small and decrease as $h\to 0$, as expected, even without the low-frequency correction ($\xi=0$).

We next examine the low-frequency regime on the same surface $\Gamma=\mathbb{S}^2$, now holding the mesh fixed at $h\approx 0.16$ across all experiments and sweeping the wavelength $\lambda=2\pi/k$ from $\lambda/d=1$ up to $\lambda/d=10^{16}$; results are reported in Table~\ref{tab:low_frequency}. As expected, the (R)D-MCFOIE formulations are largely unaffected by decreasing frequency, consistent with the absence of low-frequency breakdown in the magnetic equations. More surprisingly, the (R)D-ECFOIE formulations also remain well-behaved throughout: field errors, divergence errors, and surface charge integrals all stay at the level of the discretization error, and iteration counts remain stable. This is in stark contrast to their indirect counterparts~\cite{burbano2025maxwell} on the same problem. The stabilization parameter $\xi=\pi\cdot10^{4}$ has negligible effect on accuracy or conditioning here, consistent with the absence of breakdown. We attribute this robustness to the simple connectedness of the sphere, which, for these formulations, appears to suppress the near-kernel components responsible for low-frequency breakdown in more general, non-simply-connected geometries, as confirmed by the results on the torus and flower surface below.

Still on the sphere, Table~\ref{tab:higher_frequencies_sphere} reports results at higher frequencies, with meshes refined to maintain $h\approx\lambda/10$ so that the discretization error stays approximately constant across all $k$. All formulations achieve the expected level of accuracy for the solver parameters used, remain well-conditioned with no sign of high-frequency breakdown, and the regularized formulations consistently require fewer GMRES iterations than their unregularized counterparts.

To further investigate the low-frequency behavior of the (R)D-ECFOIE, we consider the same planewave incident field on two additional surfaces: a torus with major and minor radii $1$ and $1/2$, and the (non-axisymmetric) flower-shaped surface~\cite{wildman2004accurate}. These surfaces are parametrized by
\begin{equation}\begin{aligned}
\text{torus:}\quad & \left[(1+\tfrac{1}{2}\cos\theta)\cos\phi,\;(1+\tfrac{1}{2}\cos\theta)\sin\phi,\;\tfrac{1}{2}\sin\theta\right], \\
\text{flower:}\quad & \sqrt{0.8+0.5(\cos(2\phi)-1)(\cos(4\theta)-1)}\left[\cos(\phi)\sin(\theta),\sin(\phi)\sin(\theta),\cos(\theta)\right],
\end{aligned}
\label{eq:lf_surfaces}\end{equation}
for $(\theta,\phi)\in[0,2\pi]\times[0,2\pi]$ and $(\theta,\phi)\in[0,\pi]\times[0,2\pi]$, respectively, with diameters of approximately $d=3$ and $d\approx 3.35$, respectively. 

Table~\ref{tab:low_frequency_torus_pw} reports divergence errors, surface charge integrals, and GMRES iteration counts for the (R)D-ECFOIE formulations on both surfaces, for $\xi=0$ and $\xi=\pi\cdot10^4$, across the same low-frequency range as in Table~\ref{tab:low_frequency}. For the flower surface, a mild breakdown is observed: divergence errors remain small at all frequencies, but the surface charge integrals grow to be orders of magnitude larger, while GMRES iteration counts remain stable. Setting $\xi=\pi\cdot10^4$ effectively brings the surface charge below the level of the divergence error. For the torus, however, the breakdown is more pronounced when $\xi=0$: GMRES iteration counts grow as the frequency decreases and surface charge integrals remain large, indicating a severe loss of accuracy. The stabilization parameter $\xi=\pi\cdot10^4$ makes a decisive difference in this case, suppressing the surface charge and fully restoring accuracy---as jointly confirmed by the divergence error, the iteration counts, and the surface charge integrals. The contrast in behavior between the two surfaces is interesting: we speculate that the non-trivial topology of the torus (which has a non-simply-connected boundary and a non-trivial first homology group) may be responsible, as it introduces additional near-kernel components in the low-frequency limit that the stabilization term is designed to suppress.

\begin{table}[htb]
\centering
 \footnotesize
 \renewcommand{\arraystretch}{1.0}
\begin{tabular}{ccccccccccccc}
\toprule
 & \multicolumn{6}{c}{Torus} & \multicolumn{6}{c}{Flower} \\
\cmidrule(lr){2-7}\cmidrule(lr){8-13}
 & \multicolumn{3}{c}{D-ECFOIE; $\xi = 0$} & \multicolumn{3}{c}{D-ECFOIE; $\xi=\pi\cdot10^{4}$} & \multicolumn{3}{c}{D-ECFOIE; $\xi = 0$} & \multicolumn{3}{c}{D-ECFOIE; $\xi=\pi\cdot10^{4}$} \\
\cmidrule(lr){2-4}\cmidrule(lr){5-7}\cmidrule(lr){8-10}\cmidrule(lr){11-13}
$\lambda/d$ & $e_{\dive\! E^s}$ & $|q_1|$ & \#iter & $e_{\dive\! E^s}$ & $|q_1|$ & \#iter & $e_{\dive\! E^s}$ & $|q_1|$ & \#iter & $e_{\dive\! E^s}$ & $|q_1|$ & \#iter \\ \midrule
$10^{16}$ & 8.24\text{e-}4 & {\bf 1.65\text{e-}0} &  227 & 7.08\text{e-}4 & 2.72\text{e-}6 &  178 & 7.55\text{e-}5 & 2.98\text{e-}3 &  75 & 7.58\text{e-}5 & 6.25\text{e-}9 &  76 \\
$10^{08}$ & 8.24\text{e-}4 & {\bf 1.65\text{e-}0} &  227 & 7.08\text{e-}4 & 2.72\text{e-}6 &  178 & 7.55\text{e-}5 & 2.98\text{e-}3 &  75 & 7.58\text{e-}5 & 6.25\text{e-}9 &  76 \\
$10^{04}$ & 8.24\text{e-}4 & {\bf 1.65\text{e-}0} &  227 & 7.08\text{e-}4 & 2.74\text{e-}6 &  179 & 7.56\text{e-}5 & 3.01\text{e-}3 &  76 & 7.61\text{e-}5 & 6.31\text{e-}9 &  77 \\
$10^{02}$ & 7.88\text{e-}4 & {\bf 2.23\text{e-}1} &  151 & 7.09\text{e-}4 & 3.01\text{e-}7 &   89 & 7.48\text{e-}5 & 2.86\text{e-}3 &  78 & 7.56\text{e-}5 & 3.72\text{e-}9 &  78 \\
$10^{01}$ & 7.08\text{e-}4 & 2.52\text{e-}3 &  82 & 6.67\text{e-}4 & 4.08\text{e-}9 &  78 & 7.80\text{e-}5 & 6.89\text{e-}5 &  84 & 7.43\text{e-}5 & 1.03\text{e-}10 &  80 \\
$10^{00}$ & 3.43\text{e-}4 & 6.96\text{e-}6 &  89 & 3.41\text{e-}4 & 1.12\text{e-}11 &  90 & 2.70\text{e-}5 & 5.83\text{e-}7 & 110 & 2.69\text{e-}5 & 7.28\text{e-}13 & 111 \\
\midrule\addlinespace[3pt]
 & \multicolumn{6}{c}{Torus} & \multicolumn{6}{c}{Flower} \\
\cmidrule(lr){2-7}\cmidrule(lr){8-13}
& \multicolumn{3}{c}{RD-ECFOIE; $\xi = 0$} & \multicolumn{3}{c}{RD-ECFOIE; $\xi=\pi\cdot10^{4}$} & \multicolumn{3}{c}{RD-ECFOIE; $\xi = 0$} & \multicolumn{3}{c}{RD-ECFOIE; $\xi=\pi\cdot10^{4}$} \\
\cmidrule(lr){2-4}\cmidrule(lr){5-7}\cmidrule(lr){8-10}\cmidrule(lr){11-13}
$\lambda/d$ & $e_{\dive\! E^s}$ & $|q_1|$ & \#iter & $e_{\dive\! E^s}$ & $|q_1|$ & \#iter & $e_{\dive\! E^s}$ & $|q_1|$ & \#iter & $e_{\dive\! E^s}$ & $|q_1|$ & \#iter \\ \midrule
$10^{16}$ & 8.20\text{e-}4 & {\bf 1.60\text{e-}0} &  211 & 7.08\text{e-}4 & 3.09\text{e-}6 &  53 & 7.59\text{e-}5 & 4.19\text{e-}3 &  58 & 7.60\text{e-}5 & 2.64\text{e-}8 &  60 \\
$10^{08}$ & 8.20\text{e-}4 & {\bf 1.60\text{e-}0} &  211 & 7.08\text{e-}4 & 3.09\text{e-}6 &  53 & 7.59\text{e-}5 & 4.19\text{e-}3 &  58 & 7.60\text{e-}5 & 2.64\text{e-}8 &  60 \\
$10^{04}$ & 8.20\text{e-}4 & {\bf 1.60\text{e-}0} &  211 & 7.08\text{e-}4 & 3.10\text{e-}6 &  54 & 7.59\text{e-}5 & 4.21\text{e-}3 &  59 & 7.61\text{e-}5 & 2.66\text{e-}8 &  61 \\
$10^{02}$ & 7.85\text{e-}4 & {\bf 2.15\text{e-}1} &  90 & 7.09\text{e-}4 & 3.63\text{e-}7 &  49 & 7.53\text{e-}5 & 4.33\text{e-}3 &  60 & 7.55\text{e-}5 & 6.36\text{e-}9 &  82 \\
$10^{01}$ & 7.08\text{e-}4 & 2.52\text{e-}3 &  56 & 6.67\text{e-}4 & 4.08\text{e-}9 &  56 & 7.78\text{e-}5 & 7.00\text{e-}5 &  65 & 7.37\text{e-}5 & 1.06\text{e-}10 &  64 \\
$10^{00}$ & 3.43\text{e-}4 & 6.97\text{e-}6 & 100 & 3.41\text{e-}4 & 1.12\text{e-}11 & 100 & 2.81\text{e-}5 & 3.66\text{e-}7 & 109 & 2.81\text{e-}5 & 5.57\text{e-}13 & 109 \\
\bottomrule
\end{tabular}
\caption{Relative errors, induced surface charge~\eqref{eq:surf_charge}, and GMRES iteration counts (with a relative tolerance of $10^{-6}$) for the (R)D-ECFOIE formulations applied to the electromagnetic scattering problem by a torus and a flower surface under planewave illumination~\eqref{eq:planewave} at low frequencies. The results are obtained using fixed surface meshes of size $h \approx 0.17$. In all cases, the parameter $\eta = 100\pi$ is used, and two different values of the parameter $\xi$---introduced to mitigate the low-frequency breakdown---are shown, demonstrating its effectiveness in reducing the surface charge values. Bold entries indicate cases severely affected by low-frequency breakdown. 
}
\label{tab:low_frequency_torus_pw}
\end{table}

To further illustrate the BIE formulations for disconnected scatterers, our final example considers planewave scattering~\eqref{eq:planewave}, with $p=(1,0,0)$ and $d=2^{-\frac12}(0,1,-1)$, by two configurations of disjoint tori forming disconnected scatterers with boundaries $\Gamma=\Gamma_1\cup\Gamma_2$. The two configurations, displayed in Figure~\ref{fig:meshes_tori}, consist of a pair of interlocking nearly touching tori (left) and a pair of adjacent tori (right); each torus has minor radius $1/2.1$ rather than $1/2$, a slight reduction chosen to create a narrow gap between the interlocking pair. All four BIE formulations are solved on the fixed meshes shown in that figure, with $h\approx0.14$ ($N=54648$), at three frequencies ($k/\pi=10^{-8}$, $10^{-1}$, $10^{0}$) and for $\xi=0$ and $\xi=\pi\cdot10^4$; results are reported in Table~\ref{tab:two_tori_pw} using a GMRES tolerance of $10^{-4}$, matched to the expected discretization accuracy. At low frequency ($k=10^{-8}\pi$), low-frequency breakdown is visible in the (R)D-ECFOIE with $\xi=0$: the surface charge integrals, here measured via $\max\{|q_1|,|q_2|\}$, are one to two orders of magnitude larger than the divergence errors, most noticeably for the adjacent configuration. Setting $\xi=\pi\cdot10^4$ reduces the surface charge to $\sim10^{-7}$ across both configurations, restoring accuracy. At $k=10^{-1}\pi$ and $k=\pi$, all formulations are well-behaved with no signs of breakdown, and accuracy levels are comparable across both configurations. Figure~\ref{fig:electric_interlocking_tori_fields} shows the real parts of the $x$- and $z$-components of the total electric and magnetic fields, $\mathrm{Re}(E_x)$ and $\mathrm{Re}(H_z)$, for the interlocking configuration at $k=\pi$, confirming that GP-DIM resolves the fields accurately in the challenging nearly touching geometry.

\begin{table}[htb]
\centering
\footnotesize
\renewcommand{\arraystretch}{1.0}
\begin{tabular}{llcccccccc}
\toprule
 & & \multicolumn{3}{c}{D-ECFOIE~\eqref{eq:electric_BIE}; $\xi=0$} & \multicolumn{3}{c}{D-ECFOIE~\eqref{eq:mod_DECFOIE}; $\xi=\pi\cdot10^{4}$} & \multicolumn{2}{c}{D-MCFOIE~\eqref{eq:magnetic_BIE}} \\
\cmidrule(lr){3-5}\cmidrule(lr){6-8}\cmidrule(lr){9-10}
Tori & $k/\pi$ & $e_{\dive E^s}$ & $\max\{|q_1|,|q_2|\}$ & iter. & $e_{\dive E^s}$ & $\max\{|q_1|,|q_2|\}$ & iter. & $e_{\dive H^s}$ & iter.\\ \midrule
\multirow{3}{*}{Interlocking} & $10^{-8}$ & 5.69e-04 & 2.42e-03 & 49 & 4.68e-04 & 6.07e-07 & 52 & 1.99e-04 & 58 \\
& $10^{-1}$ & 3.86e-04 & 1.43e-03 & 58 & 3.66e-04 & 2.26e-06 & 55 & 1.11e-03 & 119 \\
& $10^{-0}$ & 8.65e-04 & 1.31e-04 & 127 & 5.39e-04 & 3.50e-07 & 126 & 4.31e-03 & 69 \\[2pt]
\multirow{3}{*}{Adjacent}     & $10^{-8}$ & 5.12e-04 & 2.20e-02 & 19 & 5.84e-04 & 1.41e-07 & 20 & 2.44e-03 & 24 \\
& $10^{-1}$ & 7.79e-04 & 1.42e-02 & 26 & 3.39e-04 & 1.34e-07 & 23 & 5.66e-04 & 47 \\
& $10^{-0}$ & 8.11e-04 & 5.91e-05 & 63 & 8.51e-04 & 1.27e-07 & 63 & 8.39e-04 & 47 \\
\midrule\addlinespace[3pt]
 & & \multicolumn{3}{c}{RD-ECFOIE~\eqref{eq:refor}; $\xi=0$} & \multicolumn{3}{c}{RD-ECFOIE~\eqref{eq:mod_reg_DECFOIE}; $\xi=\pi\cdot10^{4}$} & \multicolumn{2}{c}{RD-MCFOIE~\eqref{eq:reg_mag_BIE}} \\
\cmidrule(lr){3-5}\cmidrule(lr){6-8}\cmidrule(lr){9-10}
Tori & $k/\pi$ & $e_{\dive E^s}$ & $\max\{|q_1|,|q_2|\}$ & iter. & $e_{\dive E^s}$ & $\max\{|q_1|,|q_2|\}$ & iter. & $e_{\dive H^s}$ & iter.\\ \midrule
\multirow{3}{*}{Interlocking} & $10^{-8}$ & 4.60e-04 & 2.41e-02 & 33 & 4.71e-04 & 1.35e-07 & 40 & 3.88e-04 & 40 \\
& $10^{-1}$ & 3.23e-04 & 1.05e-03 & 42 & 2.30e-04 & 5.07e-07 & 46 & 2.56e-04 & 51 \\
& $10^{-0}$ & 4.78e-04 & 2.46e-03 & 129 & 5.37e-04 & 5.12e-07 & 127 & 1.14e-03 & 58 \\[2pt]
\multirow{3}{*}{Adjacent}     & $10^{-8}$ & 3.09e-04 & 2.16e-02 & 12 & 2.99e-04 & 1.27e-07 & 14 & 1.45e-03 & 20 \\
& $10^{-1}$ & 3.23e-04 & 5.75e-03 & 19 & 2.40e-04 & 6.01e-08 & 19 & 1.41e-04 & 36 \\
& $10^{-0}$ & 6.44e-04 & 2.74e-05 & 66 & 6.13e-04 & 9.71e-08 & 65 & 4.84e-04 & 39 \\
\bottomrule
\end{tabular}
\caption{
Divergence errors, surface charge integrals, and GMRES iteration counts (relative tolerance $10^{-4}$) for all four BIE formulations applied to planewave scattering~\eqref{eq:planewave} by two configurations of non-simply-connected scatterers at a low ($k=10^{-8}\pi$) and a moderate ($k=10^{-1}\pi$) frequency. The top half shows the direct formulations and the bottom half their analytically preconditioned counterparts; two values of $\xi$ are shown for the (R)D-ECFOIE. A fixed mesh with $h\approx0.14$ ($N=54648$) is used, with $\eta=100\pi$ throughout.}
\label{tab:two_tori_pw}
\end{table}

 \begin{figure}[htb]
   \centering
   \includegraphics[width=0.45\textwidth]{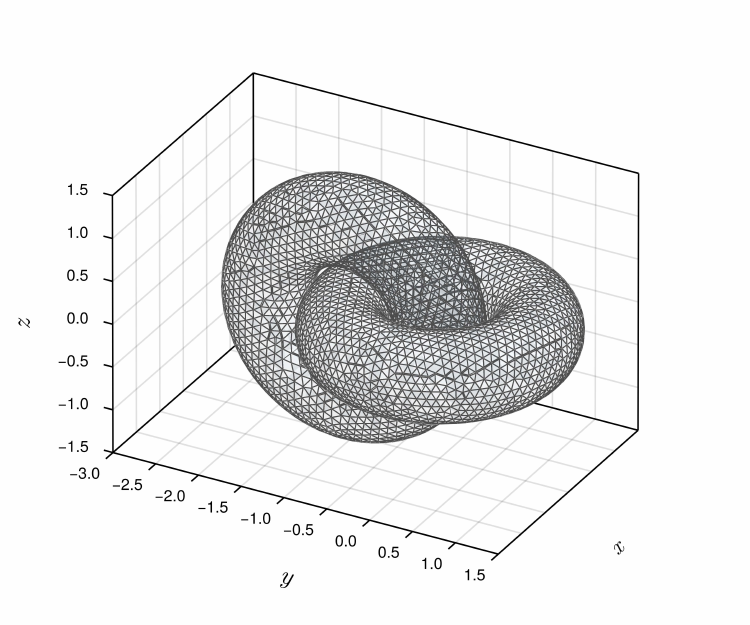}
   \includegraphics[width=0.45\textwidth]{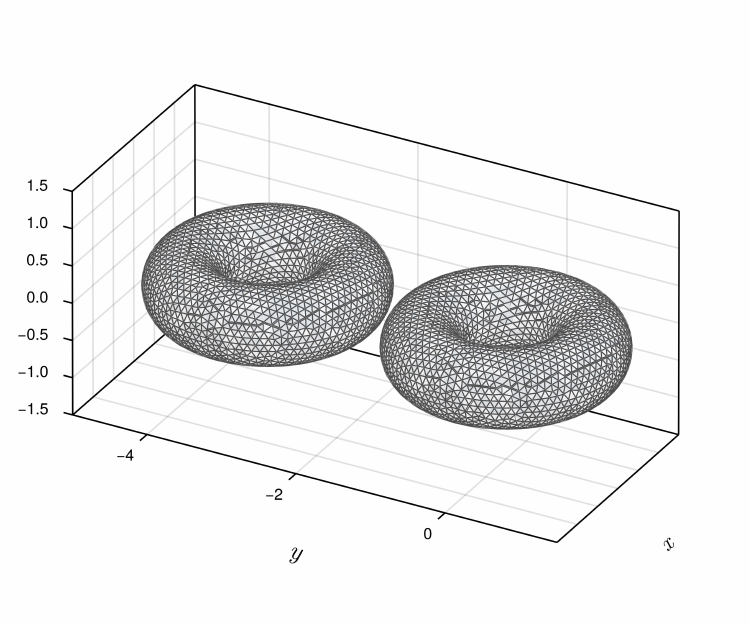}
   \caption{Surface meshes for the two non-simply-connected scatterer configurations used in Table~\ref{tab:two_tori_pw}: interlocking tori (left) and adjacent tori (right), both with $h\approx0.14$ ($N=54648$).}
   \label{fig:meshes_tori}
 \end{figure}

 \begin{figure}[htb]
   \centering
   \includegraphics[width=0.45\textwidth]{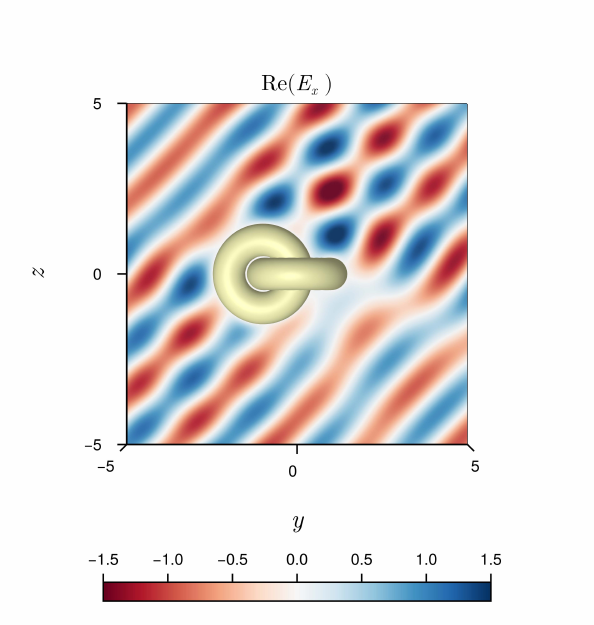}
   \includegraphics[width=0.45\textwidth]{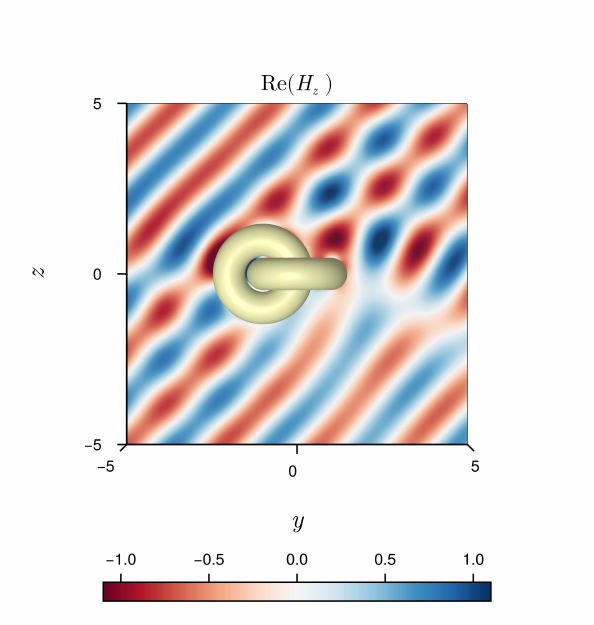}
   \caption{Real part of the $x$-component of the total electric field $\mathrm{Re}(E_x)$ (left) and the $z$-component of the total magnetic field $\mathrm{Re}(H_z)$ (right), computed for the interlocking tori configuration at $k=\pi$, with $\eta=100\pi$ and $\xi=\pi\cdot10^4$.}
   \label{fig:electric_interlocking_tori_fields}
 \end{figure}

\section{Conclusions}
We have developed direct combined-field-only boundary integral formulations---D-ECFOIE and D-MCFOIE---for time-harmonic electromagnetic scattering by smooth PEC obstacles, building on the \emph{Maxwell à la Helmholtz} framework of~\cite{burbano2025maxwell}. Unlike their indirect counterparts, the proposed formulations take as unknowns the physical field traces on the scattering surface; in particular, the D-MCFOIE yields the surface electric currents directly. We proved unique solvability at all frequencies $k>0$, introduced Calderón regularizations that render the equations of Fredholm second kind, and resolved the low-frequency breakdown of the electric-field formulation via a charge-conservation-enforcing modification. Numerical experiments with a GP-DIM-based Nyström solver implemented in \texttt{Inti.jl} confirmed the accuracy and robustness of all formulations across a range of geometries and frequencies. Future work includes extensions to Sobolev-space settings and transmission problems, all within the same Helmholtz-operator framework.

\bibliographystyle{abbrv}
\bibliography{References}

\end{document}